\documentclass[11pt, reqno]{amsart}
\usepackage[T1]{fontenc}
\usepackage[english]{babel}
\usepackage{amssymb}
\usepackage{amsmath,esint}
\usepackage[shortlabels]{enumitem}
\usepackage{amsthm}
\usepackage{amscd,mdwlist}
\usepackage[margin=1in]{geometry}
\usepackage{cite}
\usepackage{bbm, mathrsfs,pgf,tikz}
\usepackage{thmtools}
\usepackage[backgroundcolor=white, bordercolor=blue,
linecolor=blue]{todonotes}
\usepackage{url,comment}
\usepackage{dsfont}
\usepackage{cases}
\usetikzlibrary{arrows}
\usepackage[pagebackref, pdfpagelabels, plainpages=false]{hyperref}
\usepackage[normalem]{ulem}
\usepackage{soul}

\hypersetup{colorlinks=true,linkcolor=teal,filecolor=teal,urlcolor=teal,citecolor=blue}

\usepackage{color}
  



\pagestyle{plain} 

\theoremstyle{plain}
\declaretheorem[title=Theorem, parent=section]{theorem}
\declaretheorem[title=Lemma,sibling=theorem]{lemma}
\declaretheorem[title=Proposition,sibling=theorem]{proposition}
\declaretheorem[title=Corollary,sibling=theorem]{corollary}

\theoremstyle{definition}
\declaretheorem[title=Definition,sibling=theorem]{definition}
\declaretheorem[title=Remark,sibling=theorem]{remark}
\declaretheorem[title=Remark, numbered=no]{remark*}
\declaretheorem[title=Example, sibling=theorem]{example}

\numberwithin{equation}{section}

\DeclareMathOperator*{\argmin}{argmin}

\DeclareMathOperator{\supp}{supp}

\DeclareMathOperator{\loc}{loc}

\DeclareMathOperator{\pv}{\operatorname{p.\!v.}}

\newcommand{\cF}{\mathcal{F}}
\newcommand{\cG}{\mathcal{G}}
\newcommand{\cE}{\mathcal{E}}

\DeclareMathOperator{\cP}{\mathcal{P}}
\DeclareMathOperator{\cV}{\mathcal{V}}
\newcommand{\cH}{\mathcal{H}}

\DeclareMathOperator{\cJ}{\mathcal{J}}

\DeclareMathOperator{\fD}{\mathscr{D}}

\newcommand{\iil}{\iint\limits}
\DeclareMathOperator{\R}{\mathbb{R}}
\DeclareMathOperator{\N}{\mathbb{N}}
\renewcommand{\d}{\mathrm{d}}
\renewcommand{\div}{\operatorname{div}}

\newcommand{\eps}{\varepsilon}

\newcommand{\Hnu}{H_{\nu}(\mathbb{R}^d)}
\newcommand{\Hnud}{\dot{H}_{\nu}(\mathbb{R}^d)}

\newcommand{\Hnup}{H^{\psi}(\mathbb{R}^d)}
\newcommand{\Hnupd}{\dot{H}^{\psi}(\mathbb{R}^d)}

\newcommand{\Hnuw}{H^{\widetilde{\psi}}(\mathbb{R}^d)}
\newcommand{\Hnuwd}{\dot{H}^{\widetilde{\psi}}(\mathbb{R}^d)}

\newcommand{\Hnus}{H^{\psi^*}(\mathbb{R}^d)}
\newcommand{\Hnusd}{\dot{H}^{\psi^*}(\mathbb{R}^d)}

\newcommand{\Hnuid}{\dot{H}^{\psi^{-1}}(\mathbb{R}^d)}
\newcommand{\Hnui}{H^{\psi^{-1}}(\mathbb{R}^d)}

\newcommand{\vertiii}[1]{{\left\vert\kern-0.25ex\left\vert\kern-0.25ex\left\vert #1 \right\vert\kern-0.25ex\right\vert\kern-0.25ex\right\vert}}

\setcounter{tocdepth}{1}

\addto\extrasenglish{

}

\parindent0ex

\begin{document}

\title{Gradient Flow Solutions For Porous Medium Equations with Nonlocal L\'{e}vy-type Pressure\\
\url{https://doi.org/10.1007/s00526-025-02942-6}}

\author[Foghem]{Guy Foghem}
\author[Padilla-Garza]{David Padilla-Garza}
\author[Schmidtchen]{Markus Schmidtchen}

\address[Foghem]{{\small Fakult\"{a}t f\"{u}r Mathematik Institut f\"{u}r wissenschaftliches Rechnen, TU Dresden Zellescher Weg 23/25, 01217, Dresden, Germany. Email: guy.foghem@tu-dresden.de}}
\address[Padilla-Garza]{{\small Fakult\"{a}t f\"{u}r Mathematik Institut f\"{u}r wissenschaftliches Rechnen, TU Dresden Zellescher Weg 23/25, 01217, Dresden, Germany. Email: david.padilla-garza@tu-dresden.de}}
\address[Schmidtchen]{{\small Fakult\"{a}t f\"{u}r Mathematik Institut f\"{u}r wissenschaftliches Rechnen, TU Dresden Zellescher Weg 23/25, 01217, Dresden, Germany. Email: markus.schmidtchen@tu-dresden.de}}

\begin{abstract}
We study a porous medium-type equation whose pressure is given by a  nonlocal L\'{e}vy operator associated to a symmetric jump L\'{e}vy kernel. The class of nonlocal operators under consideration appears as a generalization of the classical fractional Laplace operator. For the class of L\'evy operators, we construct weak solutions using a variational minimizing movement scheme. The lack of interpolation techniques is ensued by technical challenges that render our setting more challenging than the one known for fractional operators.
\end{abstract}

\keywords{Nonlocal L\'{e}vy operators, Nonlocal Sobolev spaces, integrodifferential equations, Porous medium equation, Gradient Flow}
\subjclass[2020]{
35A15,
35R09, 
35R11, 
46E35,
47A07, 
49J40, 
35S10 
}

\maketitle

\tableofcontents

\section{Introduction}\label{sec:intro}
This paper is dedicated to studying the nonlocal continuity equation
\begin{align}
    \label{eq:main_eqn}
    \begin{cases}
    \partial_t u - \div(u\nabla v)=0 &\quad\text{in }\,\, \R^d\times (0, \infty),\\
    L v = u &\quad\text{in }\,\, \R^d\times (0, \infty),\\
    u(0)=u_0&\quad\text{in }\,\, \R^d\times \{0\},  
    \end{cases}
\end{align}
for some initial data $u_0\in \mathcal{P}_2(\R^d)$, where $\mathcal{P}_2(\R^d)$ is space of probability measures with finite second moment where $u=u(x,t)$ denotes the density at a point $x\in \R^d$ at time $t>0$. Here, the pressure, $v=v(x,t)$, is coupled to the density via a  linear, nonlocal, pseudo-differential operator
\begin{align}
    \label{eq:intro-pressure-relation}
    L v = u.
\end{align}
Yet, in all generality, little attention has been paid to studying more general pseudo-differential operators. In this paper, we want to address this dearth in the literature and assume the operator $L$ be a symmetric \emph{integrodifferential operator of L\'{e}vy type}, \emph{i.e.},
\begin{align}
    \label{eq:levy-operator}
    Lu (x):=2 \pv \int_{\R^d} (u(x)-u(y))\nu(x-y)\d y, 
\end{align}
where the kernel $\nu \geq 0$ is assumed to be the Lebesgue density of a \emph{symmetric L\'{e}vy measure}, that is $\nu$ satisfies
\begin{align}
    \tag{$L$}
    \label{eq:levy-cond}
    \nu(h)=\nu(-h), \qquad\text{and}\qquad \int_{\R^d} (1\land|h|^2)\nu(h)\d h<\infty.
\end{align}
Here, we use the notation $a\land b$ for $\min(a,b),$ $a,b\in \R$. This also includes the fractional Laplace operator, $L=(-\Delta)^{s}$, which has received much attention in the last decade. L\'{e}vy operators as in Eq. \eqref{eq:levy-operator} arise naturally in probability theory as the generator of L\'{e}vy processes with pure jumps, whose jumps interactions are regulated by the measure $\nu(h)\d h$, \emph{cf.} \cite{Sat13, App09, Ber96} for more details on L\'{e}vy processes.  The study of nonlocal problems driven by L\'{e}vy operators has recently gained increasing popularity. For some recent results in the study of problems involving pseudodifferential operators of L{\'e}vy type, we refer to \cite{guy-thesis} for the study of nonlocal function spaces, \cite{DFK22,Rut18} for the study of Dirichlet type problem, \cite{FK22,Fog23s}  for the study of (non)linear Neumann type problems and their connection with the corresponding local problems and \cite{BoEn23} for the study of generalized porous medium equations; see also the references therein. 

Allowing for L{\'e}vy operators in Eq.~\eqref{eq:intro-pressure-relation} comes at the price of losing interpolation inequalities, which used to play a crucial role in known results. In this work, we remedy the lack of interpolation techniques by obtaining surrogate estimates from the energy and its associated energy-dissipation functional as well as the study of fine properties of the symbol of the nonlocal operator $L$. Then, in order to construct solutions to Eq. \eqref{eq:main_eqn}, we employ a minimizing movement scheme \emph{\`a la} De Giorgi \cite{DeG1993}. 

In fact, in the last two decades, this topic has experienced a renaissance due to the intimate link between the continuity equation, 
\begin{subequations}
\label{eq:intro-gradflow-W2}
\begin{align}
    \partial_t u+ \div(u \mathbf{v}) =0, 
\end{align}
governing the evolution of the density, $u$, and absolutely continuous curves in the set of probability measures, \cite{AGS08}. 
At least formally, when
\begin{align}
    \mathbf{v} = - \nabla \frac{\delta \cE}{\delta u}.
\end{align}
\end{subequations}
solutions to the continuity equation can be understood as gradient flows in the set of probability measures, \cite{AGS08, JKO98, Ott2001}. In the subsequent discussion, three choices of functionals, $\cE \in \{\cF, \cJ, \cH\}$, will play a prominent role:
\begin{align*}
    \cJ_m(u) := \frac{1}{m-1} \int_{\R^d} u^m \d x
\end{align*}
which gives rise to the porous medium equation \cite{Vaz07, Ott2001}, as well as the entropy
\begin{align*}
    \cH(u) := \int_{\R^d} u \log u \d x
\end{align*}
which gives rise to the linear diffusion equation \cite{Ott2001, JKO98}, and the nonlocal interaction energy
\begin{align}
    \label{eq:intro-NL-energy}
    \cF(u) := \frac12 \int u K * u \d x,
\end{align}
for some kernel $K$, \cite{CDFS2011}. In our work, formally, the kernel $K$ can be related to a symmetric L{\'e}vy measure, $\nu$, mentioned above, denoted by $K_\nu$, see for example \cite{EFJ21} where this type of kernels are considered. In the case of the inverse fractional Laplacian, $K_\nu$, coincides with the Riesz kernel (for instance, see \cite{Ste70}) and the resulting equation reads
\begin{align*}
    \partial_t u = \nabla \cdot (u \nabla p),
\end{align*}
with $p = K_\nu * u = (-\Delta)^{-s} u$, and is referred to as porous medium equation \emph{with fractional pressure} in the literature. 
It is a special instance of the equation
\begin{align*}
    \partial_t u = \nabla \cdot (u \nabla^{\alpha} u^{m-1}),
\end{align*}
which was derived as a model for the dynamics of dislocations in a crystal \cite{BKM2010}. For this class of equations, the existence of self-similar profiles was shown in \cite{BIK2011, BIK2015}. The special case $m=2$ was studied in \cite{CV2011} as a generalization of the classical porous medium equation, where the pressure-density closure relation is local and given by $p = u^\gamma$, for some $\gamma \geq 1$. More precisely, the existence of weak solutions in the sense
\begin{align*}
    \int_0^T\int_\Omega u (\partial_t \varphi - \nabla \varphi \cdot \nabla K_s* u) \d x \d t + \int_\Omega \varphi(x,0) u_0(x) \d x = 0,
\end{align*}
where $K_s$ is the Riesz kernel. The argument hinges on a sophisticated approximation argument that consists of adding small viscosity, mollifying the Riesz kernel, and removing the degeneracy in the mobility. Subsequently, the authors show H\"older-regularity and boundedness of weak solutions in \cite{CSV2013} and extended the existence result to a wider class of initial data \cite{SV2014}. The limit $s\to 1$, where $K$ becomes the Newtonian potential, has been considered in \cite{LZ2000, SV2014}. The other limit case, $s \to 0$, corresponding to the local porous medium equation as the limit of a porous medium equation with fractional pressure has been established in \cite{LMS2018}. We also refer to \cite{LM2001,BE2022, HDPP2023} for nonlocal approximation of the porous medium equation with exponent two and \cite{CEW2023} for arbitrary exponent. In these works, however, the nonlocal approximation is smooth and integrable, unlike the Riesz kernel. Similarly, \cite{DDMS2023, ES2023, ambrosio2008gradient, ambrosio2011gradient}, consider a system of porous medium type with the pseudodifferential operator is $K_s = (\mathrm{id} - s \Delta )^{-1}$. In this vein, it is worth mentioning that the works \cite{Fog23s, FK22, guy-thesis,FGKV20,Voi17} discuss the nonlocal-to-local limit for `elliptic' problems in the context of L{\'e}vy kernels. 

Returning to Eq. \eqref{eq:main_eqn}, let us recall that the operator $L$ is a pseudo-differential operator, and we can associate a symbol to it, which is given by
$$
    \psi(\xi)= 2\int_{\R^d} (1-\cos(\xi\cdot h))\nu(h)\d h,
$$ 
that is, in other words, 
$$
    \widehat{Lu}(\xi)=\psi(\xi)\widehat{u}(\xi),
$$
for any $\xi\in \R^d$, and any Schwartz function, $u\in \mathcal{S }(\R^d)$. Here, we have used the common notation,  $\widehat{u}$, to denote the Fourier transform of $u$, \emph{\emph{i.e.}},
\begin{align*}
    \widehat{u}(\xi) = \frac{1}{(2\pi)^{d/2}} \int_{\mathbb{R}^d}e^{-i\xi \cdot x} u(x)\,\d x, 
\end{align*}
for any $\xi\in \mathbb{R}^d$. Therefore, formally, we may choose $v= L^{-1}u= K_\nu*u$, so that $\widehat{L^{-1}u}(\xi)= \psi^{-1}(\xi) \widehat{u}(\xi)$, where $K_\nu$ is a Borel measure potential (defining a tempered distribution) whose Fourier transform is given by $\widehat{K_\nu} = \psi^{-1}$,  motivating the interest in the nonlocal energy functional, Eq. \eqref{eq:intro-NL-energy}. Throughout this work, we shall write, by an abuse of notation, $v = L^{-1}u$ to refer to $\widehat v(\xi) = \psi^{-1}(\xi) \widehat u(\xi)$. In light of this formal link, we can cast Eq. \eqref{eq:main_eqn} into the form of a formal 2-Wasserstein gradient flow, \emph{cf.} Eq. \eqref{eq:intro-gradflow-W2}, for the nonlocal energy given in Eq. \eqref{eq:intro-NL-energy}, \emph{\emph{i.e.}},
\begin{align*}
    \partial_t u =  \div\left(u \nabla \frac{\delta \mathcal F}{\delta u}\right).
\end{align*} 
Moreover, taking into account \eqref{eq:intro-NL-energy} one obtains the natural quadratic energy 
\begin{align*}
\cF(u)= \frac12\int_{\R^d} |\widehat{u}(\xi)|^2\psi^{-1}(\xi) \d \xi. 
\end{align*}
\subsection{Relation to Fractional Laplacian}
The epitome of a L{\'e}vy operator is obtained by setting
$$
    \nu(h) =\frac{C_{d,s}}{2}|h|^{-d-2s },
$$
for $s \in (0,1)$, and the operator resulting from this choice is the well-known fractional Laplace operator,
\begin{align}
    \label{eq:frac-laplace-operator}
    (-\Delta)^{s}u(x)= C_{d,s} \pv \int_{\R^d}\frac{(u(x)-u(y))}{|x-y|^{d+2s}} \d y,
\end{align}
for any $x\in \R^d$, one of the most widely studied integrodifferential operators, see, for instance, \cite{BV16, CS07, Kw17, guy-thesis, WYP20,Ga19}.
Here, the constant $C_{d,s}$ guaranties the relation 
$$
    \widehat{(-\Delta)^{s} u}(\xi)= |\xi|^{2s}  \widehat{u}(\xi),
$$
for all $u$ in $C^\infty_c(\R^d)$, and it can be shown that it is given by 
\begin{align}\label{eq:normalizing-cst}
C_{d,s }= 
    \frac{2^{2s}\Gamma\big(\frac{d+2s }{2}\big)}{\pi^{d/2}\big|\Gamma(-s)\big|},
\end{align}
see \cite{guy-thesis,Buc16,SiTo10,ArSm61}.
Interestingly, the constant $C_{d,-s}$ guarantees a similar relation for the inverse of the fractional Laplacian $(-\Delta)^{-s}$, also known for the general range $s\in (0, d/2)$ as the Riesz potential. Namely, for $s \in (0, d/2)$, we have 
\begin{align}
    \label{eq:fourier-riesz-potential}
    \widehat{ (-\Delta)^{-s}u\,} (\xi)= |\xi|^{-2s}\widehat{u}(\xi),
\end{align}
in the distributional sense for all $u\in \mathcal{S}(\R^d)$ if and only if 
\begin{align}
    \label{eq:riesz-operator}
    (-\Delta)^{-s}u(x)= K_s*u(x)= C_{d,-s}\int_{\R^d}\frac{u(y)}{|x-y|^{d-2s}} \d y,
\end{align}
for any $x\in\R^d$, see, for instance, \cite{Ries38,Ste70, Put04,ArSm61}. Here $K_s(x)= C_{d,-s} |x|^{2s-d}$ is the Riesz kernel, and the constant $C_{d,-s}$ is exactly given by the formula in Eq. \eqref{eq:normalizing-cst}. This corresponds directly to the class of fractional pressure relations considered in \cite{LMS2018}, and we refer to recent mean-field approximations of these nonlocal systems \cite{RS2023, NRS2022}. Let us foreshadow that, below, we shall provide further, non-trivial, examples of L\'{e}vy operators to which our existence results apply.

\subsection{Our contribution}
In this article, we focus on problems of the form \eqref{eq:main_eqn}, for radial L{\'e}vy kernels, $\nu$. The recent work sparked our interest in this equation \cite{LMS2018}, in which the special case of Eq. \eqref{eq:main_eqn}, with $L = (-\Delta)^{s}$, referred to as `\emph{porous medium equation with fractional pressure}'
was investigated. Upon casting the problem into a gradient-flow setting, the authors prove an existence result of weak solutions and furnish decay estimates for $L^p$-norms of solutions to the evolution problem, Eq.  \eqref{eq:main_eqn}, which was first studied in \cite{SDV19} for the general porous medium equation with fractional pressure. Following the strategy of \cite{LMS2018}, we construct weak solutions to Eq. \eqref{eq:main_eqn} employing the minimizing movement scheme
\begin{align*}
    u_{\tau}^{k} = \argmin_{u \in \mathcal{P}_2(\R^d)} \left\{ \frac{1}{2 \tau} W^{2}\left( u,  u_{\tau}^{k-1} \right) + \cF(u)
    \right\},
\end{align*}
see Definition \ref{def:discrete}, below. In our work, we prove existence of solutions for a much wider class of L\'evy kernels which comes at the price of losing the homogeneity of the Riesz kernel associated to the fractional Laplacian. Mainly, there are two main difficulties. The first one lies in the derivation of the Euler-Lagrange equations, which is significantly more challenging for more general kernels. The second challenge is to obtain the appropriate compactness of the sequence obtained from the minimizing movement scheme, which is needed to identify its limit as a weak solution. Specifically, interpolation inequalities break down in the nonlocal energy spaces that act as surrogates of the usual fractional Sobolev spaces. A drawback of our new methodology is that we are currently unable to establish decay estimates similar to those in \cite{LMS2018}. Indeed, \cite{LMS2018} makes heavy use of interpolation inequalities and Sobolev inequalities. We point out that a Sobolev-type inequality associated to L\'evy kernels was established in \cite{Fog21b}. Therein, the Sobolev exponent is obtained in terms of a Young function, giving rise to embeddings into Orlicz-type spaces. However, the resulting Young function lacks sufficient regularity to use it as a generalized entropy. Such inequalities for L\'evy kernels are interesting in their own right, would have several other applications, and are the object of ongoing investigations. In particular, for special (not necessarily Riesz-type) kernels, the resulting Young function may already have sufficient regularity. This is kept as another avenue to explore in the future.

\subsection{Main result and additional comment}
Before stating this paper's main result, let us briefly introduce some necessary notation. Throughout, we consider spaces 
$$
    \dot{H}^\phi(\R^d) =\{u\in \mathcal{S}'(\R^d)\,:\, \widehat{u} \phi^{1/2}\in L^2(\R^d)\},
$$ and 
$$
    H^\phi(\R^d)= \dot{H}^\phi(\R^d)\cap L^2(\R^d),
$$ 
which act as generalizations of the usual (homogeneous) fractional Sobolev spaces. Here, $\phi: \R^d\setminus \{0\}\to (0,\infty)$ is a symmetric and continuous function which we refer to as `symbol', and we point the reader to Section \ref{sec:function-spaces}, for further details on their construction. Specifically, the \emph{special symbol $($or regularizing symbol$)$} 
$$\widetilde{\psi} (\xi) = |\xi|^2\psi^{-1}(\xi),$$ 
will play a prominent role as it determines the regularity of the weak solution to Eq. \eqref{eq:main_eqn}. We shall assume on the symbol that there exists a constant $c_\nu>0$ such that, for all $\xi\in \R^d$, there holds the lower-bound condition 
\begin{align}
    \tag{$C_\nu$}
    \label{eq:lower-bound-cond}
   \psi(\xi)\geq c_\nu(1\land|\xi|^2),
\end{align}
which is required to establish the convergence of the velocity, point $(iii)$, in the main theorem, Theorem \ref{thm:main-theorem}. We show in Theorem \ref{thm:symbol-bound} that Condition \eqref{eq:lower-bound-cond} holds whenever $\nu$ is unimodal, radial, and nontrivial. Here, we call a L\'{e}vy kernel $\nu$ \emph{unimodal} if there is constant $c>0$ such that 
\begin{align}
    \label{eq:nu-unimodal-bis}
     \nu(x)\leq c\nu(y),\qquad\text{whenever $|x|\geq |y|$.}
\end{align}
 Having introduced all necessary notations, we can now present the main result of this paper.

\begin{theorem}
\label{thm:main-theorem}
Assume $u_{0} \in \Hnuid \cap \mathcal{P}_2(\R^d)$.  Consider the special symbol $\widetilde{\psi} (\xi) = |\xi|^2\psi^{-1}(\xi)$. The following hold.
\begin{enumerate}[$(i)$]
    \item \emph{\textbf{Existence of discrete solutions}}. There is a unique sequence $ (u_{\tau}^{k})_k$ with $u_{\tau}^{k}\in \Hnuwd\cap \Hnuid\cap \cP_2(\R^d)$ satisfying the minimization problem \eqref{eq:discrete}.
    
    \item\emph{\textbf{Convergence and regularity}}. Define the discrete curve $u_{\tau} (t) =  u_{\tau}^{\lceil t/\tau\rceil}$
     with $u_{\tau}(0) = u_{\tau}^{0}$. There exists a curve 
     $u \in AC^{2} ([0, \infty), (\mathcal{P}_2(\R^d), W)) $ and a subsequence $(\tau_n)_n$ tending to $0$, such that 
    \begin{align*}
        u_{\tau_{n}}({t}) \to  u(t) \text{ narrowly as } n \to \infty \text{ for each } t \in \R.
    \end{align*}
    Let $\cH$  be the standard Boltzmann entropy. If $u_0\in D(\cH)$ then
    \begin{align*}
        u_{\tau_{n}} \rightharpoonup  u \text{ weakly in } 
        L^{2}( 0, T, \Hnuw ) \text{ as } n \to \infty.
    \end{align*}
    In addition, if we assume that 
    \begin{align}
        \label{eq:compactness-ass}
        \sup_{\xi\in \R^d}\frac{1}{\widetilde{\psi}(\xi)}
        |e^{i\xi\cdot h}-1|^2= \sup_{\xi\in \R^d}\frac{\psi(\xi)}{|\xi|^2}
        |e^{i\xi\cdot h}-1|^2\xrightarrow{|h|\to0}0, 
    \end{align}
    then we have 
    \begin{align*}
        u_{\tau_{n}} \to  u\,  \text{ strongly in }\,  L^{2}( 0, T, L^{2}_{\rm loc} (\R^{d}) ) \text{ as } n \to \infty.
    \end{align*}

    \item \emph{\textbf{Convergence of  the velocity}}  Define $v_{\tau}(t)= L^{-1}u_{\tau}(t) $ and  $v(t) = L^{-1}u(t)$. Assume that condition \eqref{eq:lower-bound-cond} holds and that $u_0\in D(\cH)$. Then we have $\nabla v \in  L^{2}( 0, T, L^{2} (\R^{d}) ),$ and
    \begin{align*}
        \nabla v_{\tau_{n}} \rightharpoonup  \nabla v 
        \, \text{ weakly in } L^{2}( 0, T, L^{2} (\R^{d}) ) \text{ as } n \to \infty.
    \end{align*}
            
    \item \label{item}\emph{\textbf{Solution of the limiting equation.}} Assume that the conditions \eqref{eq:compactness-ass} \eqref{eq:lower-bound-cond} are satisfied. Assume also that $u_0\in D(\cH)$. Moreover, assume the symbol $\widetilde{\psi}$ is associated with a unimodal L{\'e}vy kernel $\widetilde{\nu}$ satisfying the following condition 
    \begin{align}
        \label{eq:double-cond-origin-intro}
        \text{For any $0<\lambda<1$ there is $c_\lambda>0$ such that $\widetilde{\nu}(\lambda h)\leq c_\lambda\widetilde{\nu} (h)$, whenever $|h|\leq 1$}.
    \end{align} 
    Then the limiting curve $u$ is a weak solution of the Eq. \eqref{eq:main_eqn}, viz., the functions $u,v$ defined by items $(ii)$ and $(iii)$ satisfy 
    \begin{align}
        \label{eq:def:weak-solutions}
        \int_{0}^{\infty} \int_{\R^{d}} (\partial_{t} \varphi - \nabla \varphi \cdot \nabla v) u \, \d x \, \d t =0, \, \quad \forall \varphi \in C^{\infty}_{c} (\R^{d}\times (0, \infty)).
    \end{align}
    \item \emph{\textbf{Energy dissipation inequality.}}
   If  $u$ satisfies the weak formulation from point $(iv)$ then
    \begin{align*}
        \cF(u(T)) + \int_{0}^{T} \int_{\R^{d}} u(t)  \big| \nabla v (t) \big|^{2} \, \d x \, \d t \leq \cF(u_{0}),
    \end{align*}
    \item \emph{\textbf{ Entropy and $L^p$ Boundedness.}} 
    If $u_0\in D(\cH)$ then $$\cH(u(t))\leq \cH(u_0).$$ 
    If in addition, $\widetilde{\psi}$ is the symbol associated with a  L\'{e}vy-integrable kernel $\widetilde{\nu}$ then for $1< p<\infty$ and $u_0\in L^p(\R^d)$ we have 
    \begin{align*}
        \|u(t)\|_{L^p(\R^d)}\leq \|u_0\|_{L^p(\R^d)}.
    \end{align*}
\end{enumerate}
\end{theorem}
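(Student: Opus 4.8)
The plan is to realise Eq.~\eqref{eq:main_eqn} as the gradient flow in $(\cP_2(\R^d),W)$ of the quadratic energy $\cF(u)=\tfrac12\int_{\R^d}|\widehat u(\xi)|^2\psi^{-1}(\xi)\,\d\xi$, to construct solutions through the minimizing-movement (JKO) scheme \eqref{eq:discrete} of Definition~\ref{def:discrete}, and to recover the a~priori bounds that replace the missing interpolation inequalities via flow-interchange estimates \emph{\`a la} Matthes--McCann--Savar\'e, before passing to the limit $\tau\to0$.

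\textbf{Discrete scheme, $(i)$.} For fixed $u_\tau^{k-1}\in\cP_2(\R^d)$ the functional $u\mapsto\tfrac1{2\tau}W^2(u,u_\tau^{k-1})+\cF(u)$ is nonnegative, and along a minimizing sequence the second moments stay bounded by comparison with $u_\tau^{k-1}$, so the sequence is tight. Narrow lower semicontinuity of $W^2(\cdot,u_\tau^{k-1})$ and of $\cF$ (Fatou on the Fourier side, since $\widehat u$ converges pointwise under narrow convergence) gives a minimizer; uniqueness follows from strict convexity along \emph{linear} interpolations $u_t=(1-t)u_0+tu_1$, using that $t\mapsto W^2(u_t,u_\tau^{k-1})$ is convex (superposition of optimal plans) while $t\mapsto\cF(u_t)$ is a quadratic with leading coefficient $\tfrac12\int|\widehat{u_1}-\widehat{u_0}|^2\psi^{-1}$, which is positive unless $u_0=u_1$. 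Membership $u_\tau^k\in\Hnuid$ is equivalent to $\cF(u_\tau^k)<\infty$, and $\cF(u_\tau^k)\le\cF(u_\tau^{k-1})\le\cF(u_0)<\infty$. For $u_\tau^k\in\Hnuwd$ I use the flow interchange with the heat semigroup $(\mathsf S_t)_{t\ge0}$, the gradient flow of the Boltzmann entropy $\cH$: testing minimality of $u_\tau^k$ against $\mathsf S_tu_\tau^k$ and inserting the $0$-EVI for $\cH$, then using Plancherel on $\cF(\mathsf S_tu)=\tfrac12\int e^{-2t|\xi|^2}|\widehat u|^2\psi^{-1}$ and letting $t\to0^+$, gives
\begin{align*}
   \cH(u_\tau^{k-1})-\cH(u_\tau^k)\ \ge\ \tau\int_{\R^d}\frac{|\xi|^2}{\psi(\xi)}\,\big|\widehat{u_\tau^k}(\xi)\big|^2\,\d\xi\ =\ \tau\,\big\|u_\tau^k\big\|_{\Hnuwd}^2 .
\end{align*}
In particular $u_\tau^k\in\Hnuwd$ (which, together with the finite mass, forces $u_\tau^k\in L^2$), and $\cH$ is nonincreasing along the scheme, which yields the first assertion of $(vi)$ after passing to the limit by narrow lower semicontinuity of $\cH$.

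\textbf{A~priori bounds and compactness, $(ii)$–$(iii)$.} Telescoping the one-step inequality $\cF(u_\tau^k)+\tfrac1{2\tau}W^2(u_\tau^k,u_\tau^{k-1})\le\cF(u_\tau^{k-1})$ gives $\sum_k\tfrac1{2\tau}W^2(u_\tau^k,u_\tau^{k-1})\le\cF(u_0)$, so the piecewise-constant interpolants $u_\tau$ are uniformly $\tfrac12$-Hölder in $(\cP_2(\R^d),W)$ on compact time intervals, with uniformly bounded second moments; the refined Arzelà--Ascoli theorem for curves in $(\cP_2,W)$ gives $u\in AC^2([0,\infty),(\cP_2(\R^d),W))$ and a subsequence $\tau_n\to0$ with $u_{\tau_n}(t)\to u(t)$ narrowly for each $t$. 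When $u_0\in D(\cH)$, summing the flow-interchange inequality above gives $\sum_k\tau\|u_\tau^k\|_{\Hnuwd}^2\le\cH(u_0)-\inf_k\cH(u_\tau^k)<\infty$ (the entropy being bounded below uniformly over the iterates thanks to their uniformly bounded second moments), i.e.\ a uniform bound in $L^2(0,T;\Hnuwd)$; since $|\widehat{u_\tau}|\le\|u_\tau\|_{L^1}=1$ on $\{|\xi|\le1\}$ and $\widetilde\psi$ is bounded below on $\{|\xi|\ge1\}$ (because $\psi(\xi)\le C(1+|\xi|^2)$ by \eqref{eq:levy-cond}), this upgrades to a bound in $L^2(0,T;\Hnuw)$, whence the stated weak convergence. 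For $(iii)$, under \eqref{eq:lower-bound-cond} one has $\psi(\xi)\ge c_\nu|\xi|^2$ for $|\xi|\le1$ and $\psi(\xi)\ge c_\nu$ for $|\xi|\ge1$, so splitting $\|\nabla v\|_{L^2}^2=\int|\xi|^2\psi(\xi)^{-2}|\widehat u|^2$ at $|\xi|=1$ yields $\|\nabla v\|_{L^2(\R^d)}^2\le c_\nu^{-1}\big(2\cF(u)+\|u\|_{\Hnuwd}^2\big)$, so $\nabla v_\tau$ is bounded in $L^2(0,T;L^2(\R^d))$; a weak limit exists and equals $\nabla v$ because $L^{-1}$ is continuous on tempered distributions and $u_{\tau_n}(t)\to u(t)$ narrowly. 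Finally, under \eqref{eq:compactness-ass} one gets the translation equicontinuity $\|u(\cdot+h)-u\|_{L^2}^2\le\big(\sup_\xi\widetilde\psi(\xi)^{-1}|e^{i\xi\cdot h}-1|^2\big)\,\|u\|_{\Hnuwd}^2$ with prefactor tending to $0$; combining this with the $W$-equicontinuity in time (controlling time translations via the discrete continuity equation) and a spatial cut-off, an Aubin--Lions--Simon argument gives $u_{\tau_n}\to u$ strongly in $L^2(0,T;L^2_{\rm loc}(\R^d))$.

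\textbf{Limiting equation, energy, $L^p$ bounds, $(iv)$–$(vi)$, and the main obstacle.} The Euler--Lagrange equation of the JKO step, $(\mathrm{id}-T_\tau^k)/\tau=-\nabla v_\tau^k$ in $L^2(u_\tau^k\,\d x)$ with $T_\tau^k$ the optimal map of $u_\tau^k$ onto $u_\tau^{k-1}$, together with a second-order Taylor expansion of a test $\varphi\in C^\infty_c(\R^d\times(0,\infty))$ along $T_\tau^k$ and summation over $k$, produces a discrete version of \eqref{eq:def:weak-solutions} whose remainder is $\le 2\tau\|\varphi\|_{C^2}\cF(u_0)\to0$; in the limit the time term converges by narrow convergence of $u_{\tau_n}(t)$, and the flux term $\iint u_{\tau_n}\nabla\varphi\cdot\nabla v_{\tau_n}$ converges to $\iint u\,\nabla\varphi\cdot\nabla v$ because $u_{\tau_n}\nabla\varphi\to u\nabla\varphi$ strongly in $L^2(0,T;L^2(\R^d))$ (from the $L^2_{\rm loc}$ compactness, $\varphi$ compactly supported) against $\nabla v_{\tau_n}\rightharpoonup\nabla v$ weakly, giving $(iv)$; item $(v)$ follows by letting $\tau\to0$ in the discrete De~Giorgi energy--dissipation estimate, using narrow lower semicontinuity of $\cF$ and lower semicontinuity of the action functional $(\rho,m)\mapsto\int|m|^2/\rho$ along $(u_{\tau_n},u_{\tau_n}\nabla v_{\tau_n})\to(u,u\nabla v)$; and the $L^p$ bound in $(vi)$ follows, when $\widetilde\nu$ is L\'evy-integrable (so the operator $\widetilde L$ with symbol $\widetilde\psi$ is bounded), from the flow interchange with the porous-medium flow $\mathsf S^{(p)}_t$, i.e.\ the gradient flow of $\cJ_p(u)=\tfrac1{p-1}\int_{\R^d}u^p\,\d x$ ($0$-geodesically convex for $p\ge1$):
\begin{align*}
   \cJ_p(u_\tau^{k-1})-\cJ_p(u_\tau^k)\ &\ge\ \tau\int_{\R^d}(u_\tau^k)^p\,\widetilde L u_\tau^k\,\d x\\
   &=\ \tau\iint_{\R^d\times\R^d}\!\big((u_\tau^k(x))^p-(u_\tau^k(y))^p\big)\big(u_\tau^k(x)-u_\tau^k(y)\big)\,\widetilde\nu(x\!-\!y)\,\d x\,\d y\ \ge\ 0,
\end{align*}
using $(a^p-b^p)(a-b)\ge0$ for $a,b\ge0$, hence $\|u_\tau^k\|_{L^p}\le\|u_0\|_{L^p}$ for $p>1$, which persists in the limit by lower semicontinuity. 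The hypotheses that $\widetilde\psi$ come from a unimodal kernel $\widetilde\nu$ with the origin-doubling property \eqref{eq:double-cond-origin-intro} are invoked throughout $(iv)$–$(vi)$ to access the function-space theory of Section~\ref{sec:function-spaces} (density of smooth functions in $\Hnuw$, the Gagliardo-type representation of $\|\cdot\|_{\Hnuwd}$, Rellich-type compactness), which legitimizes these computations. The two genuinely hard points are precisely the ones announced in the introduction: (a) the rigorous Euler--Lagrange identity, i.e.\ differentiability of $\cF$ at $u_\tau^k$ in the Wasserstein sense with gradient $\nabla v_\tau^k=\nabla L^{-1}u_\tau^k$ — for a general L\'evy symbol $\psi$ there is no explicit convolution kernel nor homogeneity, so the needed control of $v$ and $\nabla v$ must be extracted from fine properties of $\psi$ rather than from potential estimates; and (b) the compactness of $\{u_{\tau_n}\}$ without interpolation between $L^2$, $\Hnuid$ and $\Hnuwd$, which is exactly what conditions \eqref{eq:compactness-ass} and \eqref{eq:lower-bound-cond} (the latter guaranteed for unimodal $\nu$ by Theorem~\ref{thm:symbol-bound}) are engineered to provide, converting symbol bounds into the translation-equicontinuity and velocity estimates used above.
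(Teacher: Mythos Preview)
Your proposal is correct and tracks the paper's own route closely: JKO scheme for $\cF$, flow interchange with the heat flow for the $\Hnuwd$ estimate and entropy monotonicity, the symbol splitting under \eqref{eq:lower-bound-cond} to bound $\nabla v_\tau$, the translation estimate from \eqref{eq:compactness-ass} for spatial compactness, and passage to the limit in the discrete Euler--Lagrange relation; you have also correctly isolated the two genuine technical cores.

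Two places where the paper's argument is more specific than your sketch. First, in the Euler--Lagrange derivation (the paper's Theorem~\ref{thm:euler-lagrange-discrete}) the unimodality and origin-scaling hypothesis \eqref{eq:double-cond-origin-intro} on $\widetilde\nu$ enters not through generic ``function-space theory'' but through a concrete push-forward stability lemma (Theorem~\ref{thm:push-bi-Lipschtz}): one needs that the perturbed densities $u_\delta=(\mathrm{id}+\delta\eta)_\#u_\tau^k$ stay uniformly bounded in $\Hnuw$, and this requires controlling $\widetilde\nu(\zeta^{-1}(x)-\zeta^{-1}(y))$ by $\widetilde\nu(x-y)$ for bi-Lipschitz $\zeta$, which is exactly where unimodality and the scaling condition are used. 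This uniform bound is what allows one to upgrade the pointwise Fourier convergence $\widehat{u_\delta}\to\widehat u$ to the strong local convergence $\nabla v_\delta\to\nabla v$ needed to identify the limit of $\tfrac1\delta(\cF(u_\delta)-\cF(u))$. Second, for the $L^p$ estimate the paper (Theorem~\ref{thm:general-flow-interchange}) does not run the flow interchange with the pure porous-medium semigroup $\partial_t w=\Delta w^p$, because that flow is degenerate and one cannot a~priori justify the Fourier computation $\tfrac{\d}{\d t}\cF(w_t)=-\big(L_G\circ w_t,w_t\big)_{\widetilde\psi}$; instead one uses the perturbed entropy $\cG+\varepsilon\cH$, whose flow $\partial_t w=\Delta(L_G(w)+\varepsilon w)$ is uniformly parabolic (hence $w_t$ is bounded and positive), carries out the computation, and then lets $\varepsilon\to0$. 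Finally, ``L\'evy-integrable kernel'' in item~$(vi)$ means only that $\widetilde\nu$ satisfies the L\'evy condition~\eqref{eq:levy-cond}, so that the Gagliardo-type representation of $(\cdot,\cdot)_{\widetilde\psi}$ is available; it does not mean $\widetilde\nu\in L^1(\R^d)$, and $\widetilde L$ need not be bounded on $L^2$.
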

For the readers' convenience, we give a short overview of where to find the individual statements. Item $(i)$ follows from Theorem \ref{thm:lower-semicontinuous}, which establishes the lower semi-continuity of the Moreau-Yosida penalization
\begin{align*}
u \mapsto \frac{1}{2\tau} W^2(u,u_*)+\mathcal{F}(u), \quad u_*\in \cP_2(\R^d). 
\end{align*}
 The narrow convergence in $(ii)$ is a consequence of Theorem \ref{thm:mini-mvt-scheme}, and the weak and strong convergence is proven in Theorem \ref{thm:convergence-discrete}. In Theorem \ref{thm:weak-solution}, we identify the limit curve as a weak solution, thereby establishing point $(iv)$. The dissipation of the energy, $(v)$, is proven in Theorem \ref{thm:energy-disspation-est}, and finally, the control of convex entropies, $(vi)$, is established in Theorem \ref{thm:entropy-boundedness} and Theorem \ref{thm:lp-boundedness}. \\

The rest of this paper is organized as follows. In Section \ref{sec:function-spaces}, we introduce fundamental results on L\'{e}vy operators, different formulations thereof, and their link to stochastic jump processes. Furthermore, we introduce energy spaces associated to this class of kernels and discuss compactness criteria in these spaces. We conclude Section \ref{sec:function-spaces}, with a look at these kernels through the Fourier lens, which will play a crucial ingredient in the subsequent analysis. Section \ref{sect:exist} is dedicated to introducing the variational framework in the set of probability measures seminally introduced by Jordan, Kinderlehrer \& Otto \cite{JKO98}. Using this minimizing movement scheme, we will construct a sequence of probability measures and show its narrow compactness. The limit curve is a candidate of a weak solution to our main Eq. \eqref{eq:main_eqn}. In order to identify the limit curve as a weak solution, we employ the flow-interchange technique \`a la Matthes, McCann \& Savar{\'e}\cite{MMS09} in Section \ref{sec:flow-interchange}. Using the Boltzmann-Shannon entropy and $L^p$-norms as auxiliary functionals, we obtain additional regularity, which we exploit in Section \ref{sec:convergence-discrete} to obtain convergence in better spaces. Indeed, this is sufficient to pass to the limit in the Euler-Lagrange equations derived in Section \ref{sect:dissipationsolution}, which concludes the existence result. In Section \ref{sec:examples}, we conclude our work with a comprehensive selection of standard nonlocal operators covered by our result. Interestingly, as we shall in Section \ref{sec:Misc-example}, our result also applies to non-standard nonlocal operators such as the anisotropic fractional Laplacian 
\begin{align*}
Lu(x) =(-\partial^2_{x_1x_1})^s u(x)+  \cdots+ (-\partial^2_{x_dx_d})^su(x)= 2\pv \int_{\R^d}(u(x)-u(x+h))\d\nu(h)
\end{align*}
where here $\d\nu$ is singular the L{\'e}vy measure given by 
\begin{align*}
\d \nu(h) = \sum_{j=1}^{d}C_{1,s} |h|^{-1-2s}\d h\prod_{i\neq j}\delta_{0_i}(\d h). 
\end{align*}
It is important to note that our setting does not, in general, extend to singular measures. Indeed, the following counter-example of the discrete operator
\begin{align*}
Lu(x) = 2u(x) - u(x + a) -u(x -a)= 	 2\pv \int_{\R^d}(u(x)-u(x+h))\d\nu_a(h)
\end{align*}
with $a\in \R^d$ and $\nu_a= \frac12(\delta_a+\delta_{-a})$. Here the corresponding symbol  $\psi(\xi)= 2(1-\cos(\xi\cdot a))$ is degenerate and fails to satisfy the lower bound condition $\psi(\xi)\geq c(1\land|\xi|^2)$.  Thus, this simple case does not enter the scope of our study.

\vspace{2mm}

\emph{{Acknowledgment:}
GF and DP were supported by the Deutsche Forschungsgemeinschaft / German Research Foundation (DFG) via the Research Group 3013: ``Vector-and Tensor-Valued Surface PDEs.'' Moreover, the authors would like to thank Juan Luis V\'azquez for his helpful comments on the proof of Theorem \ref{thm:general-flow-interchange}. All authors thank the anonymous referees for their valuable comments and suggestions, which helped improve the quality of this paper. }

\section{Symmetric L\'{e}vy operators and nonlocal function spaces}\label{sec:function-spaces}
This section is dedicated to acquainting the reader with certain fundamental properties of symmetric L\'{e}vy operators, $L$, and the \emph{nonlocal Sobolev-type space} associated to them.
We refer the reader to \cite{guy-thesis,Fog23, Fog23s}, which contain a comprehensive summary of recent findings concerning these spaces. Moreover, Gagliardo-Nirenberg-Sobolev-type inequalities were recently obtained in \cite{Fog21b}, and the notion of nonlocal trace spaces is discussed in \cite{FK22,Fog23s}. 

\subsection{L\'{e}vy operator}
\label{subsec:Levy}
There are several possible ways to characterize a L\'{e}vy operator. Here, we point out the most common ones and refer to \cite[Chapter 2]{guy-thesis} where more than ten characterizations of $L$ are listed. 

\subsubsection*{\textbf{Second order difference}} 
First of all, the change of variables, $y=x\pm h$, in \eqref{eq:levy-operator}, yields
\begin{align*}
    Lu(x) =  2\pv \int_{\mathbb{R}^d} \left(u(x)-u(x\pm h)\right)\nu(h)\,\d h,
\end{align*}
which, upon summing up the two expressions, yields
\begin{align}
    \label{eq:second-difference}
    Lu(x) = \int_{\mathbb{R}^d} (2u(x)-u(x+h)-u(x-h))\nu(h)\,\d h.
\end{align}
It is worth noting that the expression on the right-hand side of \eqref{eq:second-difference} is well-defined if $u\in L^\infty(\R^d)\cap C^2(B_\delta(x))$ for some $\delta>0$. In this case, the principal value may be dropped in the definition.

\subsubsection*{\textbf{Pseudo-differential operator}} Next, we show that the integrodifferential operator $L$ can be realized as a pseudo-differential operator, as foreshadowed in the introduction. Indeed, its definition using the Fourier symbol can be justified rigorously.
\begin{theorem}
    \label{thm:fourier-symbol}
    For $u \in \mathcal{S}(\mathbb{R}^d)$ and $\xi\in\R^d$ the following relation holds:
    \begin{align*}
    \widehat{Lu} (\xi) = \psi(\xi)\widehat{u}(\xi).
    \end{align*}
    Here $\psi$ is the Fourier symbol of $L$, which is given by
    \begin{align*}
        \psi(\xi) = 2 \int_{\mathbb{R}^d}(1- \cos{(\xi\cdot h)})\nu(h)\,\d h.
    \end{align*}
\end{theorem}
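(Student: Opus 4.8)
The plan is to reduce to the second-difference representation \eqref{eq:second-difference} and then justify the interchange of the $h$-integral with the Fourier integral in $x$ via Fubini's theorem. Write $\delta_h u(x) := 2u(x)-u(x+h)-u(x-h)$. First I would record that for $u\in\mathcal{S}(\R^d)$ the integrand is absolutely integrable, so that the principal value in \eqref{eq:levy-operator} may be dropped: a second-order Taylor expansion gives $|\delta_h u(x)|\le |h|^2\sup_{|z|\le 1}\|D^2u(x+z)\|$ for $|h|\le 1$, and since $u\in\mathcal{S}(\R^d)$ the function $x\mapsto \sup_{|z|\le 1}\|D^2 u(x+z)\|$ decays faster than any polynomial, hence belongs to $L^1(\R^d)$; combined with $\int_{|h|\le 1}|h|^2\nu(h)\,\d h<\infty$ this yields $\int_{\R^d}\int_{|h|\le 1}|\delta_h u(x)|\,\nu(h)\,\d h\,\d x<\infty$. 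For $|h|>1$ I bound $|\delta_h u(x)|\le 2|u(x)|+|u(x+h)|+|u(x-h)|$ and use translation invariance of the $L^1$-norm together with $\int_{|h|>1}\nu(h)\,\d h<\infty$ to get $\int_{\R^d}\int_{|h|>1}|\delta_h u(x)|\,\nu(h)\,\d h\,\d x\le 4\|u\|_{L^1}\int_{|h|>1}\nu(h)\,\d h<\infty$. Thus $(x,h)\mapsto\delta_h u(x)$ lies in $L^1\big(\R^d\times\R^d;\ \d x\otimes\nu(h)\,\d h\big)$, so in particular $Lu\in L^1(\R^d)$ and its Fourier transform is defined classically.

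Next I would apply Fubini to exchange the order of integration,
\begin{align*}
\widehat{Lu}(\xi)=\frac{1}{(2\pi)^{d/2}}\int_{\R^d}e^{-i\xi\cdot x}\Big(\int_{\R^d}\delta_h u(x)\,\nu(h)\,\d h\Big)\d x=\int_{\R^d}\nu(h)\,\widehat{\delta_h u}(\xi)\,\d h,
\end{align*}
and then use the translation rule $\widehat{u(\cdot+h)}(\xi)=e^{i\xi\cdot h}\widehat u(\xi)$ to compute $\widehat{\delta_h u}(\xi)=\big(2-e^{i\xi\cdot h}-e^{-i\xi\cdot h}\big)\widehat u(\xi)=2(1-\cos(\xi\cdot h))\widehat u(\xi)$. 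Pulling $\widehat u(\xi)$ out of the $h$-integral gives
\begin{align*}
\widehat{Lu}(\xi)=\widehat u(\xi)\int_{\R^d}2(1-\cos(\xi\cdot h))\,\nu(h)\,\d h=\psi(\xi)\,\widehat u(\xi).
\end{align*}
It remains to check that $\psi(\xi)$ is finite for every $\xi$, which follows from the elementary bound $0\le 2(1-\cos t)\le \min(4,t^2)$: indeed $2(1-\cos(\xi\cdot h))\le \min(4,|\xi|^2|h|^2)\le (4+|\xi|^2)(1\land|h|^2)$, so finiteness is a consequence of the L\'evy condition \eqref{eq:levy-cond}; symmetry and continuity of $\psi$ are immediate from the formula by dominated convergence using the same majorant.

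The only genuinely delicate point is the Fubini justification near $h=0$, i.e.\ the Taylor/Schwartz-decay estimate establishing that $(x,h)\mapsto\delta_h u(x)$ is jointly integrable against $\d x\otimes\nu(h)\,\d h$; once this is in place, the rest is a routine manipulation of the Fourier transform and its translation property.
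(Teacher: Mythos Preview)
Your proof is correct and follows essentially the same approach as the paper: establish joint $L^1$-integrability of $(x,h)\mapsto\delta_h u(x)\,\nu(h)$ by splitting into small and large $|h|$ (Taylor for small $h$, the trivial $L^1$ bound for large $h$), apply Fubini, and use the translation rule for the Fourier transform. The only cosmetic difference is that the paper bounds $\int_{\R^d}|\delta_h u(x)|\,\d x\le |h|^2\|D^2u\|_{L^1(\R^d)}$ directly via the integral form of Taylor's theorem rather than your pointwise $\sup_{|z|\le 1}\|D^2u(x+z)\|$ majorant, but this is the same idea.
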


\begin{proof}
Observe that for each $h\in \R^d$, we have
\begin{align*}
    \int_{\mathbb{R}^d}|u(x+h)+u(x-h)-2u(x)| \,\d x &= \int_{\R^d} \left|\int_{0}^{1} \int_{0}^{1} 2t \big[D^2 u(x-th + 2sth) \cdot h\big]\cdot h\,\d s\d t\right| \d x\\
    &\leq |h|^2 \int_{\mathbb{R}^d} \big|D^2 u(x)\big|\,\d x,  
\end{align*}
proving that the integral is finite. On the other hand, we have 
\begin{align*}
\int_{\mathbb{R}^d}|u(x+h)+u(x-h)-2u(x)| \,\d x\, 
&\leq 4\int_{\mathbb{R}^d}|u(x)| \,\d x<\infty.
\end{align*}
Combining the two preceding estimates, we readily find that 
\begin{align*}
    \int_{\mathbb{R}^d}|u(x+h)+u(x-h)-2u(x)| \,\d x\, \leq C(1\land |h|^2), 
\end{align*}
with $C= 4\|u\|_{L^1(\R^d)} + \||D^2 u|\|_{L^1(\R^d)}$. Setting
$$ 
    \Lambda(x,h)= |u(x+h)+ u(x-h) -2u(x)|\nu(h),
$$
we note that $\Lambda \in L^1(\mathbb{R}^d\times \mathbb{R}^d)$ since
\begin{align*}
\iil_{\mathbb{R}^d \mathbb{R}^d} \Lambda(x,h) \,\d h,\d x
&\leq C\int_{\mathbb{R}^d} (1\land |h|^2)\nu(h)\d h.
\end{align*}
Therefore, using the identity $\widehat{u(\cdot+h )}(\xi) = \widehat{u}(\xi) e^{i\xi \cdot h}$, along with Fubini's Theorem, we get the desired result as follows
\begin{align*}
\widehat{Lu}(\xi) 
&= - \int_{\mathbb{R}^d} e^{-i\xi \cdot x} \int_{\mathbb{R}^d}(u(x+h)+u(x-h)-2u(x)) \nu(h)\,\,\d h\, \d x \\
&= - \int_{\mathbb{R}^d} \nu(h)\int_{\mathbb{R}^d}e^{-i\xi \cdot x} (u(x+h)+u(x-h)-2u(x)) \,\,\d x\, \d h\\
&=- \widehat{u}(\xi)\int_{\mathbb{R}^d} (e^{i\xi \cdot h}+ e^{-i\xi \cdot h}-2) \nu(h)\,\d h\\
&= 2\widehat{u}(\xi)\int_{\mathbb{R}^d} (1-\cos{(\xi \cdot h)}) \nu(h)\,\d h= \widehat{u}(\xi)\psi(\xi),
\end{align*}
which concludes the proof.
\end{proof}

\begin{proposition}[Upper bound on the symbol]
    \label{prop:bound-levy-symbol}
    There exists a constant, $C>0$ such that
    \begin{align}
        \label{eq:symbol-upper-bound}
        \psi(\xi)\leq 2\int_{\R^d} (1\land |\xi|^2|h|^2)\nu(h)\d h\leq C(1+|\xi|^2 ),
    \end{align}
    for any  $\xi\in \R^d$. The constant can be chosen as $C = \kappa_\nu$k where 
    $$
        \kappa_\nu:= 2\|\nu \|_{L^1(\R^d, 1\land |h|^2)}.
    $$
\end{proposition}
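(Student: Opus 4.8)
The plan is to establish the two inequalities in \eqref{eq:symbol-upper-bound} separately, using the elementary bound $1-\cos t\leq \tfrac12(1\land t^2)$ (up to a harmless multiplicative constant) as the starting point. First I would recall that from the definition $\psi(\xi)=2\int_{\R^d}(1-\cos(\xi\cdot h))\nu(h)\,\d h$. The crucial pointwise estimate is $1-\cos(\xi\cdot h)\leq \tfrac12\min(4,|\xi\cdot h|^2)=2\land\tfrac12|\xi\cdot h|^2$, which follows from $1-\cos t\leq 2$ trivially and $1-\cos t\leq t^2/2$ by Taylor expansion (or by integrating $|\sin|\leq 1$ twice). Combining with the Cauchy--Schwarz bound $|\xi\cdot h|^2\leq |\xi|^2|h|^2$ and absorbing the constant, one gets $1-\cos(\xi\cdot h)\leq 2(1\land|\xi|^2|h|^2)$, whence, after integrating against $\nu(h)\,\d h$,
\begin{align*}
\psi(\xi)\leq 2\int_{\R^d}(1\land|\xi|^2|h|^2)\nu(h)\,\d h,
\end{align*}
which is the first inequality (the factor $2$ matches since $2\cdot 2(1\land\cdots)$ would give $4$, so I would be a bit careful and instead use $1-\cos t\le 1\wedge t^2/2 \le \tfrac12(1\wedge t^2)$ — wait, $1\wedge t^2/2$ is not $\le \tfrac12(1\wedge t^2)$ for $t$ large; the clean statement is $1-\cos t\le 1\wedge\frac{t^2}{2}\le 1\wedge t^2$, giving $\psi(\xi)\le 2\int(1\wedge|\xi\cdot h|^2)\nu(h)\,\d h\le 2\int(1\wedge|\xi|^2|h|^2)\nu(h)\,\d h$, exactly as claimed).

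For the second inequality, I would split the integral according to whether $|h|\leq 1$ or $|h|>1$. On $\{|h|\le 1\}$ one has $1\land|\xi|^2|h|^2\leq |\xi|^2|h|^2=|\xi|^2(|h|^2\land 1)$, while on $\{|h|>1\}$ one has $1\land|\xi|^2|h|^2\leq 1=|h|^2\land 1$. Hence in all cases $1\land|\xi|^2|h|^2\leq (1+|\xi|^2)(1\land|h|^2)$, and therefore
\begin{align*}
2\int_{\R^d}(1\land|\xi|^2|h|^2)\nu(h)\,\d h\leq 2(1+|\xi|^2)\int_{\R^d}(1\land|h|^2)\nu(h)\,\d h = \kappa_\nu(1+|\xi|^2),
\end{align*}
with $\kappa_\nu:=2\|\nu\|_{L^1(\R^d,1\land|h|^2)}$, and the finiteness of this integral is exactly the L\'evy condition \eqref{eq:levy-cond}. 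This yields $C=\kappa_\nu$ as asserted.

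Neither step presents a genuine obstacle — this is a soft estimate — so the "hard part" is really just bookkeeping: keeping track of the constants so that the factor $2$ in the first displayed inequality comes out exactly, and invoking the right elementary trigonometric bound ($1-\cos t\le 1\wedge t^2$, or more sharply $1-\cos t\le 1\wedge\tfrac{t^2}{2}$) so that no spurious constant appears. I would state the trigonometric lemma explicitly at the start, then the two-region splitting for the second bound, and finally cite \eqref{eq:levy-cond} for integrability; the whole proof should be three or four lines of display math.
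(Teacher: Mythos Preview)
Your approach is essentially the same as the paper's: the trigonometric pointwise bound followed by the elementary inequality $(1\land|\xi|^2|h|^2)\leq(1+|\xi|^2)(1\land|h|^2)$. The second step is exactly what the paper does, and your splitting argument is fine.

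There is, however, a genuine slip in your bookkeeping for the first inequality. You end with the claim $1-\cos t\le 1\wedge\tfrac{t^2}{2}\le 1\wedge t^2$, but the first of these is false: take $t=\pi$, then $1-\cos\pi=2>1=1\wedge\tfrac{\pi^2}{2}$. The correct pointwise bound is $1-\cos t\le 2\wedge\tfrac{t^2}{2}\le 2(1\wedge t^2)$, which is precisely what the paper's proof derives via $1-\cos t=2\sin^2(t/2)$ together with $|\sin s|\le 1\wedge|s|$. This yields $\psi(\xi)\le 4\int_{\R^d}(1\wedge|\xi|^2|h|^2)\nu(h)\,\d h$, not the factor $2$ displayed in the proposition. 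In other words, the proposition as stated carries a harmless constant typo, and your confusion about the factor is a symptom of that; the paper's own proof does not actually establish the first inequality with the constant $2$ either. For the purposes of the paper only the second inequality $\psi(\xi)\le \kappa_\nu(1+|\xi|^2)$ matters, and that one is unaffected.
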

\begin{proof}
    From the elementary inequality $|\sin t|\leq 1 \land |t|$,  for all $t\in \mathbb{R}$, we readily obtain
    \begin{align*}
        |1- \cos{(\xi\cdot h)}|= \big|2\sin^2{\frac{\xi\cdot h}{2}}\big|\leq 2\land \frac12|\xi|^2|h|^2\leq 2(1\land |\xi|^2|h|^2).
    \end{align*}
    The desired estimates follow from the fact that $(1\land|\xi|^2|h|^2)\leq (1+|\xi|^2)(1\land|h|^2)$. 
\end{proof}
In the case of a radial L{\'e}vy kernel, a lower bound counterpart for the symbol can be obtained.

\begin{theorem}[Lower bound on the symbol]
    \label{thm:symbol-bound}
    Assume $\nu$ is radial. Then there exists a constant, $c>0$, such that 
    \begin{align*}
        c\int_{\R^d}(1\land|h|^2|\xi|^2)\nu(h)\d h\leq \psi(\xi)\leq 2\int_{\R^d}(1\land|h|^2|\xi|^2) \nu(h)\d h,
    \end{align*}
    for all $\xi\in\R^d$. Moreover, using $ \kappa_\nu = 2\|\nu\|_{L^1(\R^d,1\land|h|^2)}$,  we have 
    \begin{align}
        \label{eq:symbol-bound}
        \frac{c\kappa_\nu}{2}(1\land|\xi|^2)\leq \psi(\xi)\leq
        \kappa_\nu (1+|\xi|^2),
    \end{align}
    for all $\xi\in\R^d$.
\end{theorem}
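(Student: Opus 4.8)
The plan is to establish the pointwise comparison $\psi(\xi) \asymp \int_{\R^d}(1\wedge |h|^2|\xi|^2)\nu(h)\,\d h$; the upper bound is already contained in the proof of Proposition 2.5 (using $|1-\cos(\xi\cdot h)| \le 2(1\wedge |\xi|^2|h|^2)$), so the only real work is the lower bound. First I would reduce to the integral comparison: once we know $\psi(\xi) \ge c\int_{\R^d}(1\wedge |h|^2|\xi|^2)\nu(h)\,\d h$, the two-sided estimate \eqref{eq:symbol-bound} follows by sandwiching $(1\wedge |h|^2|\xi|^2)$ between $c_1(1\wedge|\xi|^2)(1\wedge|h|^2)$ and $(1+|\xi|^2)(1\wedge|h|^2)$ and recalling $\kappa_\nu = 2\|\nu\|_{L^1(\R^d,1\wedge|h|^2)}$. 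Note the lower sandwich $(1\wedge t|h|^2) \ge (1\wedge t)(1\wedge |h|^2)$ is elementary (check the three cases $|h|\le 1$, $|h|>1$, and $t\le 1$ versus $t>1$), so this reduction step is routine.

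For the lower bound on $\psi(\xi)$ itself, the key point is a uniform lower bound on the averaged cosine. Writing $r = |\xi|$ and using radiality of $\nu$, I would pass to polar coordinates and use the fact that for a radial function the angular average of $\cos(\xi\cdot h)$ over the sphere $|h| = \rho$ depends only on the product $r\rho$. Concretely, there is a function $g_d(t) = \fint_{S^{d-1}} \cos(t\,\omega_1)\,\d\sigma(\omega)$ (a multiple of $t^{-(d-2)/2}J_{(d-2)/2}(t)$), and one has
\begin{align*}
\psi(\xi) = 2\int_{\R^d}(1-\cos(\xi\cdot h))\nu(h)\,\d h = 2\int_0^\infty \big(1 - g_d(r\rho)\big)\, \widetilde\nu(\rho)\,\d\rho,
\end{align*}
where $\widetilde\nu(\rho)$ is $\nu$ restricted to the sphere times the surface measure. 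The elementary inequalities $0 \le 1 - g_d(t)$ and, crucially, $1 - g_d(t) \ge c_d (1\wedge t^2)$ for all $t \ge 0$ — which holds because $1 - g_d(t) \sim \frac{t^2}{2d}$ as $t\to 0$ and $\limsup_{t\to\infty} g_d(t) < 1$ (indeed $g_d(t)\to 0$), so $\inf_{t\ge 1}(1-g_d(t)) > 0$ — then give $\psi(\xi) \ge 2c_d \int_0^\infty (1\wedge r^2\rho^2)\widetilde\nu(\rho)\,\d\rho = 2c_d\int_{\R^d}(1\wedge|\xi|^2|h|^2)\nu(h)\,\d h$, which is exactly what is needed. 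An alternative that avoids Bessel functions: use $1 - \cos(\xi\cdot h) = 2\sin^2(\tfrac{\xi\cdot h}{2})$ and integrate over a solid angle where $|\xi\cdot h| \gtrsim |\xi||h|$; radiality lets one rotate $\xi$ to a coordinate axis and then a fixed-proportion subset of each sphere satisfies $|\xi\cdot h| \ge \tfrac12 |\xi||h|$, on which $\sin^2(\tfrac{\xi\cdot h}{2}) \ge c(1\wedge |\xi|^2|h|^2)$.

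The main obstacle is proving the uniform lower bound $1 - g_d(t) \ge c_d(1\wedge t^2)$ on all of $[0,\infty)$ — that is, ruling out that the spherical average $g_d(t)$ returns close to $1$ for large $t$. For $d \ge 2$ this follows from the Riemann–Lebesgue-type decay $g_d(t)\to 0$ (so $1-g_d(t)\to 1$), hence the infimum over $t\ge 1$ is a positive number attained or approached, combined with continuity and strict positivity of $1-g_d$ on $(0,\infty)$; near $t=0$ the Taylor expansion $g_d(t) = 1 - \tfrac{t^2}{2d} + O(t^4)$ handles the $t^2$ behaviour. For $d = 1$ the "spherical average" is just $\cos t$, and $1 - \cos t$ does return to $0$ at $t = 2\pi k$; there the cleaner route is the second alternative above, splitting the radial integral and using that $\int_0^\infty(1\wedge r^2\rho^2)\widetilde\nu(\rho)\,\d\rho$ already controls things — or, most simply, invoking the known fact (e.g.\ from the Lévy–Khintchine / subordination literature, cf.\ \cite{guy-thesis}) that for radial $\nu$ the symbol is comparable to its "truncated second moment" $h\mapsto \int (1\wedge|h|^2|\xi|^2)\nu$. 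I would present the $d\ge 2$ argument in detail and dispatch $d=1$ by the direct sine-square estimate on the region $\{|\xi\cdot h| \ge \tfrac12|\xi||h|\}$, which in one dimension is all of $\R$.
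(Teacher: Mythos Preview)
Your primary approach via the spherical average $g_d(t)=\fint_{S^{d-1}}\cos(t\omega_1)\,\d\sigma(\omega)$ is exactly the paper's argument: the paper also passes to polar coordinates, invokes Riemann--Lebesgue on the sphere to get $\int_{S^{d-1}}\cos(|\xi|w_1)\,\d\sigma\to0$, uses the quadratic behaviour $1-\cos t\ge \tfrac{2t^2}{\pi^2}$ near $0$, and combines these into $\int_{S^{d-1}}(1-\cos(\xi\cdot w))\,\d\sigma\ge c|\mathbb{S}^{d-1}|(1\wedge|\xi|^2)$ before integrating in the radial variable. The final reduction to \eqref{eq:symbol-bound} via $(1\wedge a)(1\wedge b)\le 1\wedge ab$ is also the same. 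So for $d\ge2$ your plan is correct and matches the paper.

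There is, however, a genuine gap in your ``alternative that avoids Bessel functions'' and in your proposed $d=1$ treatment. The claimed pointwise bound $\sin^2(\tfrac{\xi\cdot h}{2})\ge c(1\wedge|\xi|^2|h|^2)$ on the cone $\{|\xi\cdot h|\ge\tfrac12|\xi||h|\}$ is \emph{false} once $|\xi\cdot h|$ is large: for $\xi\cdot h=2\pi$ the left side vanishes while the right side equals $1$. Restricting to a cone controls the \emph{lower} bound on $|\xi\cdot h|$ but not the upper bound, and it is precisely the upper bound that makes $\sin^2$ usable. Consequently your $d=1$ fix (``the cone is all of $\R$'') does not work either. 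You correctly diagnosed that the $g_d$ argument breaks in one dimension because $g_1(t)=\cos t$ returns to $1$; note that the paper's proof has the same tacit restriction, since Riemann--Lebesgue on $\mathbb{S}^0=\{\pm1\}$ fails. A correct one-dimensional argument must exploit that $\nu$ is a Lebesgue \emph{density} (so the zeros of $1-\cos(\xi\cdot)$ carry no $\nu$-mass) rather than a pointwise inequality; your citation of the truncated-second-moment comparison from the L\'evy--Khintchine literature is the right direction, but the sine-square shortcut as written does not close the gap.
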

\begin{proof} The upper bounds follow from Proposition \ref{prop:bound-levy-symbol}. We only prove the lower bound. The Riemann-Lebesgue lemma implies 
\begin{align*}
\int_{\mathbb{S}^{d-1}} (1-\cos(\xi\cdot w))  \d \sigma_{d-1}(w)=
|\mathbb{S}^{d-1}|-\int_{\mathbb{S}^{d-1}} \cos(|\xi|w_1) \d \sigma_{d-1}(w)\xrightarrow{|\xi|\to\infty}|\mathbb{S}^{d-1}|. 
\end{align*}
Using this fact in conjunction with the estimate 
$$
    1 - \cos(t) = 2 \sin^2 \left( \frac{t}{2} \right) \geq \frac{2t^2}{\pi^2},
$$
for any $0 \leq t \leq \frac{\pi}{2}$, we can find a constant $c>0$ such that 
\begin{align*}
    \int_{\mathbb{S}^{d-1}} (1-\cos(\xi\cdot w)) \d \sigma_{d-1}(w)\geq c|\mathbb{S}^{d-1}|(1\land |\xi|^2), 
\end{align*}
for all $\xi\in \R^d$. Switching to polar coordinates and using the above estimate we get 
\begin{align*}
    \psi(\xi)
    &=\int_{\R^d} (1-\cos(\xi\cdot h))\nu(h)\d h\\
    &= \int_0^\infty r^{d-1} \nu(r) \int_{\mathbb{S}^{d-1}}
    (1-\cos(\xi\cdot rw))\d \sigma_{d-1}(w) \d r\\
    &\geq c|\mathbb{S}^{d-1}| 
    \int_0^\infty (1\land|\xi|^2r^2) r^{d-1} \nu(r) \d r\\
    &=c\int_{\R^d}(1\land|\xi|^2|h|^2)\nu(h)\d h . 
\end{align*}
Furthermore, since $(1\land a)(1\land b)\leq 1\land ab$, for any $a, b>0$, we get 
\begin{align*}
    \psi(\xi)\geq c\int_{\R^d}(1\land|\xi|^2|h|^2)\nu(h)\d h \geq\frac{c\kappa_\nu}{2}(1\land|\xi|^2),
\end{align*}
with $\kappa_\nu$ defined as above.
\end{proof}

\begin{remark}[Comparable growth]
\label{rem:comparable-growth}
The symbols $\psi$ and $\widetilde{\psi}(\xi) := |\xi|^2\psi^{-1}(\xi)$ have a similar growth. Indeed it is not difficult to check that
there exists $c_1, c_2>0$ such that, for all $\xi\in \R^d$, 
\begin{align*}
    c_1(1\land|\xi|^2)\leq \psi(\xi)\leq  c_2(1+|\xi|^2),
\end{align*}
 if and only if there exists $c_3, c_4>0$ such that, for all $\xi\in \R^d$,
\begin{align*}
    c_3(1\land|\xi|^2)\leq \widetilde{\psi}(\xi)\leq c_4(1+|\xi|^2). 
\end{align*}
\end{remark}

\begin{remark}
Assume that $\nu$ is radial, in which case  we write  $\nu(h)= \nu(|h|) $, then $\psi$ is also a radial function. 
Indeed, by the rotation invariance of the Lebesgue measure we get that
\begin{align*}
    \psi(\xi)& = 2\int_{\mathbb{R}^d}(1- \cos{(\xi\cdot h)})\nu(|h|)\,\d h\notag \\
    &=2 \int_{\mathbb{R}^d}(1- \cos{(|\xi|e_1 \cdot h' )})\nu(|h'|)\,\d h'\notag\\
    &= 2\int_{\mathbb{R}^d}(1- \cos{(h_1})\nu(h/|\xi|)\frac{\,\d h}{|\xi|^{d}}\quad(\xi\neq 0 )
    \\&= \psi(|\xi|e_1)\notag.
\end{align*}
\end{remark}
In particular, if $\nu(h)= \frac12 C_{d, s}|h|^{-d-2s}$, $s\in (0, 1)$ then $\psi(\xi)=|\xi|^{2s}$.
\vspace{2mm}

\subsubsection*{\textbf{Generator of a symmetric L\'{e}vy process and of a semigroup}} 
According to Bochner's Theorem for the Fourier transform (see \cite{BeFo75}), for each $t\geq0$, there exists a function $p_t\geq0$ continuous on $\R^d\setminus \{0\}$  such that 
\begin{align*}
    \widehat{p_t}(\xi)= \frac{1}{(2\pi)^{d/2}}
    \int_{\R^d}e^{-i\xi\cdot x}p_t(x)\d x = e^{-t\psi(\xi)},
\end{align*}
for any $\xi \in \R^d$. The convolution rule implies that $\widehat{p_{t+s}}= e^{-(t+s)\psi}=\widehat{p_t}\widehat{p_s}= \widehat{p_t*p_s}$, whence, we have 
$p_{t+s} = p_t* p_s=p_s*p_t$, for all $t,s\geq0$. Therefore, the family of operators $(P_t)_t$ defined by
\begin{align*}
    P_tu(x)= u*p_t(x)= \int_{\R^d}u(y)p_t(x-y)\d y,
\end{align*}
with $x \in \R^d$, is a strongly continuous semigroup on $L^2(\R^d)$, \emph{\emph{i.e.}}, $P_{t+s} = P_t\circ P_s =P_s\circ P_t$, for all $s,t\geq 0$,  and $\|P_tu-u\|_{L^2(\R^d)} \xrightarrow{t\to 0^+}0.$ As we shall see next, it turns out that the generator of semigroup $(P_t)_t$ is  the operator $-L$.  To this end, let $u\in \mathcal{S}(\R^d)$, whence  $\widehat{Lu} (\xi) = \widehat{u}(\xi) \psi(\xi)\in L^2(\R^d)$, by \eqref{eq:symbol-upper-bound}. The Plancherel Theorem implies,
\begin{align*}
    \Big\|\frac{P_t u-u}{t} -(-Lu) \Big\|_{L^2(\R^d)} = \Big\|\frac{\widehat{p_t} \widehat{u}-\widehat{u}}{t} +\widehat{u}\psi \Big\|_{L^2(\R^d)} = \Big\|\widehat{u} \psi\frac{e^{-t\psi} -1+t\psi}{t\psi } \Big\|_{L^2(\R^d)},
\end{align*}
having used the definition of $p_t$. Finally, we observe that the rightmost term goes to zero, as $t\to 0$, due to the fact that the function $\zeta: s\mapsto \frac{e^{-s}-1+s}{s}$ with $\zeta(0)=0$ is continuous and bounded on $[0,\infty)$. Indeed, an application of the Dominated Convergence Theorem suffices to establish
\begin{align*}
    \Big\|\widehat{u} \psi\frac{e^{-t\psi} -1+t\psi}{t\psi } \Big\|_{L^2(\R^d)}\xrightarrow{t\to 0}0.
\end{align*}

The Kolmogorov Extension Theorem (see \cite{Sat13}) implies the existence of a stochastic process $(X_t)_t$ with the transition density is $p_t(x,y) = p_t(x-y)$, namely $\mathbb{P}^x(X_t\in A)=\mathbb{E}^x[\mathds{1}_A(X_t)]$. More generally 
\begin{align*}
    \mathbb{E}^x[u(X_t)]= \int_{\R^d} u(y)p_t(x,y)\d y. 
\end{align*}
Here $\mathbb{P}^x$ (resp. $\mathbb{E}^x$) is the probability (resp. the expectation) corresponding to a process $(X_t)_t$ starting from the position $x$, \emph{\emph{i.e.}} $\mathbb{P}^x(X_0=x) =1$. 
The generator of such a stochastic process turns out to be $-L$. Indeed for a smooth function $u$,
\begin{align*}
    \lim_{t\to 0}\frac{\mathbb{E}^x[u(X_t)]-u(x)}{t} =\lim_{t\to 0}\frac{P_t u(x)-u(x)}{t} =- Lu(x). 
\end{align*}
In fact, $(X_t)_t $ is a pure-jump isotropic unimodal L\'{e}vy process in $\R^d$, \emph{i.e.},  a stochastic process with stationary and independent increments and c\`{a}dl\`{a}g paths whose transition function $p_t(x)$ is isotropic and unimodal. We refer to \cite{Sat13} for a more extensive study on L\'{e}vy processes. 
 
\begin{remark}[Particular cases]
The above applies to the case where $\psi(\xi) = |\xi|^2$, in which case, $L=-\Delta$ and the process $(X_t)_t$ is the well-known Brownian motion. If $\psi(\xi)=|\xi|$, the corresponding process is a Cauchy process whose generator is $L=(-\Delta)^{1/2}$. More generally  if  $\psi(\xi)=|\xi|^{2s}$, the corresponding process is a $2s$-stable process  whose generator is $L=(-\Delta)^{s}.$
\end{remark}

\subsubsection*{\textbf{ Energy form}} We now show that the integrodifferential operator $L$ is intimately related to a Hilbert space of great interest in its own right. Let $H_\nu(\mathbb{R}^d)$ be the space of functions $u\in L^2(\mathbb{R}^d)$ such that $\mathcal{E}_\nu(u,u)<\infty$ where  the bilinear form  $\mathcal{E}_\nu$ is defined as 
\begin{align}
    \label{def:EOm}
    \cE_\nu(u,v) = \iil\limits_{\mathbb{R}^d\mathbb{R}^d} (u(x)-u(y))(v(x)-v(y))\nu(x-y)\,\d y\,\d x. 
\end{align}
As we shall see below, $H_\nu(\mathbb{R}^d)$ is a Hilbert space.
\begin{theorem}
    \label{thm:energy-fourier}
    If $\cE_\nu(u,u)<\infty$ then 
    \begin{align*}
    \cE_\nu(u,u)= \int_{\R^d} |\widehat{u}(\xi)|^2\psi(\xi)\, \d\xi. 
    \end{align*} 
    Moreover, if $\cE_\nu(u,u)<\infty$ and $\cE_\nu(v,v)<\infty$ then $(u , v)_{\psi}= \mathcal{E}_\nu(u,v)$ which can be characterized as
    \begin{align*}
    \big(u , v\big)_{\psi}: = \int_{\R^d} \widehat{u}(\xi) \overline{\widehat{v}}(\xi)\psi(\xi)\, \d\xi.
    \end{align*}
\end{theorem}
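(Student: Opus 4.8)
\textbf{Proof plan for Theorem \ref{thm:energy-fourier}.}

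The plan is to reduce everything to the Plancherel theorem by expressing the second-order difference structure in \eqref{def:EOm} through the Fourier transform. First I would fix $h\in\R^d$ and consider the translated difference $u_h(x):=u(x+h)-u(x)$; the key elementary identity is that $\widehat{u_h}(\xi)=(e^{i\xi\cdot h}-1)\widehat u(\xi)$. Applying Plancherel's theorem to $u_h$ gives
\begin{align*}
\int_{\R^d}|u(x+h)-u(x)|^2\,\d x=\int_{\R^d}|e^{i\xi\cdot h}-1|^2|\widehat u(\xi)|^2\,\d\xi.
\end{align*}
Next I would rewrite $\cE_\nu(u,u)$ by the substitution $y=x+h$ so that
\begin{align*}
\cE_\nu(u,u)=\iint_{\R^d\R^d}|u(x+h)-u(x)|^2\nu(h)\,\d h\,\d x,
\end{align*}
and apply Tonelli's theorem (the integrand is nonnegative) to interchange the order of integration in $x$ and $h$. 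Combining this with the previous display and using $|e^{i\xi\cdot h}-1|^2=2(1-\cos(\xi\cdot h))$ yields
\begin{align*}
\cE_\nu(u,u)=\int_{\R^d}|\widehat u(\xi)|^2\Big(\int_{\R^d}2(1-\cos(\xi\cdot h))\nu(h)\,\d h\Big)\d\xi=\int_{\R^d}|\widehat u(\xi)|^2\psi(\xi)\,\d\xi,
\end{align*}
by the definition of the symbol $\psi$. This proves the first identity; the finiteness of $\cE_\nu(u,u)$ is exactly what makes the right-hand side finite, so no separate integrability check is needed beyond Tonelli.

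For the bilinear statement, once $\cE_\nu(u,u)<\infty$ and $\cE_\nu(v,v)<\infty$, the sesquilinear form $\cE_\nu(u,v)$ is well-defined by Cauchy-Schwarz applied to the double integral (the pointwise products $(u(x)-u(y))\overline{(v(x)-v(y))}\nu(x-y)$ are integrable), and then the identity $(u,v)_\psi=\cE_\nu(u,v)$ follows either by polarization from the quadratic identity just proved, or by repeating the same Fubini/Plancherel computation with $\widehat{u_h}(\xi)\overline{\widehat{v_h}(\xi)}=|e^{i\xi\cdot h}-1|^2\widehat u(\xi)\overline{\widehat v(\xi)}$ in place of the modulus squared. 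I would phrase it via polarization to keep the argument short.

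The only genuine subtlety — and the one point I would be careful about — is the use of Tonelli/Fubini: I must work with the nonnegative integrand for the quadratic identity (Tonelli, no integrability hypothesis needed) and only afterwards, for the bilinear form, invoke Fubini on the absolutely convergent integral, justified by the Cauchy-Schwarz bound $|\cE_\nu(u,v)|\le\cE_\nu(u,u)^{1/2}\cE_\nu(v,v)^{1/2}$. Everything else is the standard translation-to-multiplier dictionary for the Fourier transform and the definition of $\psi$; no interpolation or finer harmonic analysis is required here.
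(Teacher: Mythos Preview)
Your proof is correct and follows essentially the same route as the paper: translate to $y=x+h$, apply Plancherel in $x$ for each fixed $h$, use Tonelli to swap the $h$- and $\xi$-integrals, and recognize $\psi$; the bilinear identity is then obtained by polarization exactly as the paper does (applying the quadratic identity to $u\pm v$). Your explicit distinction between Tonelli for the quadratic form and Fubini (via Cauchy--Schwarz) for the bilinear form is a nice touch that the paper leaves implicit.
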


\begin{proof} 
The second claim follows by applying the first one on $u+v$ and $u-v$. Therefore, we only prove the first claim. Note that  $|1-e^{-it}|^2=2(1-\cos{t})$, $t\in \mathbb{R}$. Plancherel's Theorem yields,
\begin{alignat*}{2}
    \cE_\nu(u,u) &=  \iil_{\mathbb{R}^d \mathbb{R}^d}(u(x)-u(y))^2\nu(x-y)\,\d y\,\d x
    & &=  \int_{\mathbb{R}^d} \nu(h) \int_{\mathbb{R}^d}(u(x)-u(x+h))^2\,\d x\,\d h\\
    &= \int_{\mathbb{R}^d} \nu(h) \int_{\mathbb{R}^d} |\widehat{u}(\xi)|^2|1-e^{-i\xi \cdot h}|^2\,\d \xi\, \d h
    &&= 2\int_{\mathbb{R}^d} |\widehat{u}(\xi)|^2\int_{\mathbb{R}^d} (1-\cos{(\xi \cdot h)})\nu(h) \,\d h \,\d \xi\\
    &=\int_{\mathbb{R}^d} |\widehat{u}(\xi)|^2 \psi(\xi) \,\d \xi,
\end{alignat*}
which concludes the proof.
\end{proof}
Based on this, let us next argue that $L$ can be extended to a continuous linear operator, To this end, let $u ,v \in \mathcal{S}(\mathbb{R}^d)$ and observe that 
\begin{align*}
    \cE_\nu(u,u) &=  \int_{\R^d} |\widehat{u}(\xi)|^2\psi(\xi)\, \d\xi=  \int_{\mathbb{R}^d} \widehat{Lu}(\xi) \overline{\widehat{u}(\xi)} \,\d \xi,
\end{align*}
since $\widehat{Lu}(\xi)= \psi(\xi)\widehat{u}(\xi)$. Another application of Plancherel's Theorem to the last expression gives the relation 
\begin{align*}
    \mathcal{E}_\nu(u,u)= \int_{\mathbb{R}^d} u(x) Lu(x) \,\d x.
\end{align*}
Replacing $u$ by $u+v$ leads to the relation
\begin{align*}
    \cE_\nu(u,v)= \int_{\mathbb{R}^d} v (x) Lu(x) \,\d x= \int_{\mathbb{R}^d} u(x) L v(x) \,\d x.
\end{align*}
\noindent Therefore, due to the density of $C_c^\infty(\R^d)$ and hence of $\mathcal{S}(\mathbb{R}^d)$ in $\Hnu $, see \cite[ Chapter 3]{guy-thesis}, $Lu$ can extended to a continuous linear form on $H_\nu(\mathbb{R}^d)$. Moreover, through the dual pairing we have 
\begin{align*}
    (Lu, v)= \cE_\nu(u,v), 
\end{align*}
for all $v \in H_\nu(\mathbb{R}^d)$. The integrodifferential operator $L$ can be extended to functions $u$ in $ H_\nu(\mathbb{R}^d)$. Thereupon, $L$  can legitimately be regarded as a linear bounded operator from $H_\nu(\mathbb{R}^d)$ into its dual, \emph{i.e.} $L: H_\nu(\mathbb{R}^d)\to \big(H_\nu(\mathbb{R}^d)\big)'$ where $\big(H_\nu(\mathbb{R}^d)\big)'$ is the dual of $H_\nu(\mathbb{R}^d)$. 
Treating the operator $L$ this way, \emph{i.e.}, derived from an associated energy form, we observe that $H_\nu(\mathbb{R}^d)$ is a fairly large domain for $L$ compared to the definition second-order differences or as pseudo-differential operators. Of course, it is worthwhile stressing that $L$ may not always be evaluated in the classical sense if defined through the correspondence $L: H_\nu(\mathbb{R}^d)\to \big(H_\nu(\mathbb{R}^d)\big)'$.

\subsection{Sobolev-Slobodeckij-like spaces}
In the last subsection, we have tied the operator $L$ to an associated nonlocal energy form. In doing so, we already got a glimpse at a bilinear form that can be derived from the quadratic from $\mathcal E_\nu$. Motivated by our treatise of the operator $L$ derived from the energy form, we shall now pursue a closer investigation of the associated spaces, $\Hnud$ and $\Hnu$, given by
\begin{align}
    \label{eq:def-Hnud}
    \Hnud&= \big\{ u \in L_{\loc}^1(\R^d)\,:\,\, \cE_\nu(u,u) <\infty \big \},
\end{align}
and
\begin{align}
    \label{eq:def-Hnu}
    \Hnu&= \big\{u \in L^2(\R^d)\,\,:\,\, \cE_\nu(u,u) <\infty \big \}.
\end{align}

We equip the space $\Hnud$ with the seminorm 
\begin{align*}
|u|^2_{\Hnu}:= \cE_\nu(u,u)= \|\widehat{u}\psi^{1/2}\|^2_{L^2(\R^d)},
\end{align*}
and, respectively, the space $\Hnu$ with the norm 
\begin{align*}
    \|u\|^2_{H_{\nu} (\R^d)}= \|u\|^2_{L^2 (\R^d)}+|u|^2_{\Hnu}= \|u\|^2_{L^2 (\R^d)}+ \cE_\nu(u,u). 
\end{align*}

\begin{remark}[Comparison with $L^2$]
    \label{rem:comparison-with-L2}
    We claim that, if $\nu \in L^1(\R^d)$  then  $L^2(\R^d)\subset \Hnud$  and $\Hnu=L^2(\R^d)$.  To prove this claim, we write 
    \begin{align*}
        \iil_{{\R^d\R^d}} &\big(u(x)-u(y) \big)^2 \,\nu(x-y)\d y\,\d x
        \leq 4 \iil_{{\R^d\R^d}}  |u(x)|^2 \nu(x-y)\d y\,\d x
    =4\|\nu\|_{L^1(\R^d)}\|u\|_{L^2(\R^d)}^{2}\,.
    \end{align*}
\end{remark}
Now, we finally show that the space $\Hnu$ is a Hilbert space.
\begin{theorem}[$H_\nu$ is a Hilbert space] The couple
$\big(\Hnu, \|\cdot\|_{\Hnu}\big)$ is a separable Hilbert space with the scalar product
\begin{align*}
    (u,v)_{\Hnu} = (u,v)_{L^2(\R^d)}+ \cE_\nu(u,v). 
\end{align*}
\end{theorem}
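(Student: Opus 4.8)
The plan is to verify the Hilbert space axioms in the natural order: first that $(\cdot,\cdot)_{\Hnu}$ is a well-defined inner product, then completeness, and finally separability. The bilinearity, symmetry, and nonnegativity of $(u,v)_{\Hnu}=(u,v)_{L^2(\R^d)}+\cE_\nu(u,v)$ are immediate from the corresponding properties of the $L^2$ inner product and of the bilinear form $\cE_\nu$ (which is manifestly symmetric and positive semidefinite since $\cE_\nu(u,u)=\iint (u(x)-u(y))^2\nu(x-y)\,\d y\,\d x\geq 0$); positive definiteness follows from the $L^2$ part alone, since $(u,u)_{\Hnu}\geq \|u\|_{L^2}^2$. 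So $\|\cdot\|_{\Hnu}$ is genuinely a norm and the only substantive points are completeness and separability.

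For completeness, I would take a Cauchy sequence $(u_n)$ in $\Hnu$. Since $\|u_n-u_m\|_{L^2}\leq \|u_n-u_m\|_{\Hnu}$, the sequence is Cauchy in $L^2(\R^d)$ and hence converges to some $u\in L^2(\R^d)$; passing to a subsequence we may also assume $u_n\to u$ a.e. The key step is then to use the Fourier characterization from Theorem \ref{thm:energy-fourier}: $\cE_\nu(u_n-u_m,u_n-u_m)=\|(\widehat{u_n}-\widehat{u_m})\psi^{1/2}\|_{L^2(\R^d)}^2$, so $(\widehat{u_n}\psi^{1/2})$ is Cauchy in $L^2(\R^d)$ and converges to some $g\in L^2(\R^d)$; but $\widehat{u_n}\to\widehat{u}$ in $L^2$ (Plancherel), so along a further subsequence $\widehat{u_n}\to\widehat{u}$ a.e., forcing $g=\widehat{u}\,\psi^{1/2}$ a.e. (using $\psi>0$ a.e., which holds for nontrivial $\nu$, or more robustly just identifying the limit on the set where $\psi^{1/2}\neq 0$ and noting $\widehat u \psi^{1/2}=0=g$ elsewhere). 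This shows $\widehat{u}\psi^{1/2}\in L^2$, i.e. $u\in\Hnu$, and that $\|\widehat{u_n}\psi^{1/2}-\widehat{u}\psi^{1/2}\|_{L^2}\to 0$, i.e. $|u_n-u|_{\Hnu}\to 0$; combined with $\|u_n-u\|_{L^2}\to 0$ this gives $\|u_n-u\|_{\Hnu}\to 0$. Alternatively, one can argue entirely on the Fourier side: the map $u\mapsto(\widehat u,\widehat u\,\psi^{1/2})$ is an isometry of $\Hnu$ onto a subspace of $L^2(\R^d)\times L^2(\R^d)$, and one checks this subspace is closed, which gives completeness immediately; I would likely present this cleaner version.

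For separability, the cleanest route is again through the isometric embedding $\iota:\Hnu\hookrightarrow L^2(\R^d;(1+\psi)\,\d\xi)$ given by $u\mapsto \widehat u$ (note $\|u\|_{\Hnu}^2=\int |\widehat u|^2(1+\psi)\,\d\xi$ by Plancherel and Theorem \ref{thm:energy-fourier}); since $(1+\psi)\,\d\xi$ is a $\sigma$-finite Borel measure on $\R^d$, the space $L^2(\R^d;(1+\psi)\,\d\xi)$ is separable, and any subset of a separable metric space is separable, so $\Hnu$ is separable. Invoking the density of $C_c^\infty(\R^d)$ in $\Hnu$ (already cited in the excerpt from \cite[Chapter 3]{guy-thesis}) together with the separability of $C_c^\infty(\R^d)$ in the $\Hnu$-norm would be an alternative, but the embedding argument is self-contained given what precedes.

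The main obstacle is the completeness argument, specifically the bookkeeping required to identify the limit of $\widehat{u_n}\psi^{1/2}$ as $\widehat u\,\psi^{1/2}$ rather than some unrelated $L^2$ function — this needs a passage to a.e.-convergent subsequences for both $\widehat{u_n}$ and $\widehat{u_n}\psi^{1/2}$, and a brief remark that $\psi$ is finite and positive a.e. (it is continuous and, for nontrivial radial $\nu$, bounded below by $c(1\wedge|\xi|^2)$ by Theorem \ref{thm:symbol-bound}, though for the general statement one only needs $\psi<\infty$ from Proposition \ref{prop:bound-levy-symbol} and $\psi\geq 0$ to run the argument on $\{\psi>0\}$). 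Everything else is routine.
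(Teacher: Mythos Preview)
Your proof is correct but takes a different route from the paper's. For completeness, the paper argues entirely in physical space: after extracting an a.e.-convergent subsequence $(u_{n_k})$ with $L^2$-limit $u$, it applies Fatou's lemma directly to the double integral
\[
|u_{n_k}-u|^2_{\Hnu}\leq \liminf_{\ell\to\infty}\iil_{\R^d\R^d}\big|[u_{n_k}-u_{n_\ell}](x)-[u_{n_k}-u_{n_\ell}](y)\big|^2\nu(x-y)\,\d y\,\d x,
\]
and reads off both $u\in\Hnu$ and $|u_{n_k}-u|_{\Hnu}\to 0$ from the Cauchy property. For separability, the paper uses the isometric embedding $\mathcal{I}(u)=\big(u,\,(u(x)-u(y))\nu^{1/2}(x-y)\big)$ into $L^2(\R^d)\times L^2(\R^d\times\R^d)$. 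Your argument instead passes through the Fourier characterization (Theorem~\ref{thm:energy-fourier}) and works with $\widehat u\,\psi^{1/2}$ in $L^2(\R^d)$, respectively with the weighted space $L^2(\R^d;(1+\psi)\,\d\xi)$. The paper's approach is more elementary in that it never invokes the Fourier identity and would transfer verbatim to energy forms with non-translation-invariant kernels $\nu(x,y)$; your approach is tidier once Theorem~\ref{thm:energy-fourier} is in hand, and your separability argument lands in a single weighted $L^2$ space rather than a product. One small remark: to conclude $u\in\Hnu$ from $\widehat u\,\psi^{1/2}\in L^2$ you implicitly use the converse direction of Theorem~\ref{thm:energy-fourier}; this is fine since the chain of equalities in that proof (Tonelli and Plancherel on nonnegative integrands) holds for every $u\in L^2(\R^d)$, with both sides possibly infinite.
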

\begin{proof}
Clearly, $(\cdot, \cdot)_{\Hnu} $ is a scalar product on $\Hnu$ associated with the norm $ \|\cdot\|_{\Hnu}$.  Let $(u_n)_n$ be a Cauchy sequence in $\Hnu$, then a subsequence $(u_{n_k})_k$ converges to some $u$ in  $L^2(\mathbb{R}^d)$ and a.e.  in $\mathbb{R}^d$. Fix $k$ large enough, the Fatou's lemma implies 

\begin{align*}
|u_{n_k}-u|^2_{\Hnu } \leq  \liminf_{\ell\to \infty}  \iil_{{\R^d\R^d}} \big|[u_{n_k}-u_{n_\ell}](x)-[u_{n_k}-u_{n_\ell}](y) \big|^2 \, \nu (x-y) \d y\,\d x \,.
\end{align*}

\noindent
Since $(u_{n_k})_k$ is a Cauchy sequence, the right-hand side is finite for any $k$  and tends to $0$ as $k\to \infty$. This implies $u\in \Hnu $ and $|u_{n_k}-u|^2_{\Hnu } \xrightarrow{k\to \infty} 0$. Finally, $u_n\to u$ in $\Hnu $ and hence $\Hnu$ is a Hilbert space. The map $\mathcal{I}: \Hnu \to L^2(\R^d) \times L^2(\R^d\times \R^d)$ with 
\begin{align*}
\mathcal{I}(u) = \Big(u(x), (u(x)-u(y))\nu^{1/2}(x-y)\Big)
\end{align*}
is an isometry. Hence, identifying $\Hnu$ with the closed subspace $\mathcal{I}\big(\Hnu \big)$ of $L^2(\R^d) \times L^2(\R^d\times \R^d)$ implies  that $\Hnu$ is separable. 
\end{proof}

\medskip 
Albeit not pertinent to the arguments in the proof of this paper's main result, the following properties highlight the importance of the L\'{e}vy condition \eqref{eq:levy-cond} by providing its analytic interpretation. This characterization is also true in the nonlinear setting; see \cite{Fog23,Fog23s}. 
\begin{theorem} 
\label{teo:eq}
Let $\nu:\R^d\setminus\{0\}\to [0, \infty)$.  Consider the following assertions. 
\begin{enumerate}[$(i)$]
\item The L\'{e}vy condition \eqref{eq:levy-cond} holds.
\item The embedding $H^1(\R^d)\hookrightarrow\Hnu$ is continuous. 
\item $\cE_\nu(u,u)<\infty$ for all $u\in H^1(\R^d)$. 
\item $\cE_\nu(u,u)<\infty$ for all $u\in C_c^\infty(\R^d)$. 
\item  $\Hnu \neq\{0\}$. 
\end{enumerate}
The assertions $(i)-(iv)$ are equivalent. If in addition $\nu$ is radial then the assertions $(i)-(v)$ are equivalent.
\end{theorem}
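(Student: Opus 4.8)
The plan is to prove the equivalences via the cycle $(i) \Rightarrow (ii) \Rightarrow (iii) \Rightarrow (iv) \Rightarrow (i)$, and then close the loop with $(v)$ under the radiality assumption by proving $(iv) \Rightarrow (v)$ trivially and $(v) \Rightarrow (i)$ using the radial structure. The implications $(ii) \Rightarrow (iii) \Rightarrow (iv)$ are immediate: continuity of the embedding $H^1(\R^d) \hookrightarrow \Hnu$ forces $\cE_\nu(u,u) < \infty$ for every $u \in H^1(\R^d)$, and $C_c^\infty(\R^d) \subset H^1(\R^d)$, so nothing needs to be done there beyond recording the inclusions. The implication $(iv) \Rightarrow (v)$ is also trivial since $C_c^\infty(\R^d)$ contains nonzero functions lying in $L^2(\R^d)$, so $(iv)$ produces a nonzero element of $\Hnu$.

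The first substantive step is $(i) \Rightarrow (ii)$. Here I would compute $\cE_\nu(u,u)$ on the Fourier side exactly as in Theorem \ref{thm:energy-fourier}: for $u \in H^1(\R^d)$,
\begin{align*}
    \cE_\nu(u,u) = \int_{\R^d} |\widehat{u}(\xi)|^2 \psi(\xi)\, \d\xi,
\end{align*}
and then invoke the upper bound from Proposition \ref{prop:bound-levy-symbol}, $\psi(\xi) \leq \kappa_\nu(1+|\xi|^2)$, which holds precisely because \eqref{eq:levy-cond} guarantees $\kappa_\nu = 2\|\nu\|_{L^1(\R^d, 1\land|h|^2)} < \infty$. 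This yields $\cE_\nu(u,u) \leq \kappa_\nu \|u\|_{H^1(\R^d)}^2$, hence $\|u\|_{\Hnu}^2 \leq (1+\kappa_\nu)\|u\|_{H^1(\R^d)}^2$, which is the desired continuity. (One should note that the Fourier identity for $\cE_\nu$ can be justified directly for $u \in H^1$ by the same Plancherel/Fubini computation, the only input being finiteness of $\int (1-\cos(\xi\cdot h))\nu(h)\,\d h$, again a consequence of \eqref{eq:levy-cond}.)

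The main obstacle is the reverse implication $(iv) \Rightarrow (i)$, i.e.\ extracting the L\'evy condition from mere finiteness of the energy on test functions. The strategy: fix a single $u \in C_c^\infty(\R^d)$ that is not identically zero, say with $u(0) \neq 0$ and $\widehat{u} \geq 0$ near the origin after an appropriate normalization — or more robustly, use that $\cE_\nu(u,u) = \int |\widehat{u}|^2 \psi < \infty$ forces $\psi(\xi) < \infty$ for a.e.\ $\xi$, hence (by continuity of $\psi$, or by choosing $\xi$ where $\widehat u \neq 0$) $\psi(\xi_0) = 2\int(1-\cos(\xi_0\cdot h))\nu(h)\,\d h < \infty$ for some fixed $\xi_0 \neq 0$. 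Near $h = 0$ one has $1-\cos(\xi_0\cdot h) \gtrsim |\xi_0\cdot h|^2$, which after integrating over directions (or over a cone where $|\xi_0 \cdot h| \gtrsim |h|$) controls $\int_{|h|\leq 1} |h|^2 \nu(h)\,\d h$; for $|h|$ large, $1-\cos(\xi_0\cdot h) \geq 0$ does not directly give the $\int_{|h|\geq 1}\nu(h)\,\d h < \infty$ bound pointwise, so one averages over $\xi$ in a neighborhood — using that $\cE_\nu(u,u)<\infty$ for the \emph{specific} $u$ means $\int_{\R^d}|\widehat u(\xi)|^2 \int_{|h|\geq 1}(1-\cos(\xi\cdot h))\nu(h)\,\d h\,\d\xi < \infty$, and $\int_{B_r} (1-\cos(\xi\cdot h))\,\d\xi \gtrsim |B_r|$ uniformly for $|h| \geq 1$ by Riemann–Lebesgue (exactly the averaging trick used in the proof of Theorem \ref{thm:symbol-bound}). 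This handles the tail. Combining the two regimes gives $\int(1\land|h|^2)\nu(h)\,\d h < \infty$; symmetry $\nu(h)=\nu(-h)$ is not forced by $(iv)$, so either it must already be part of the standing hypothesis on $\nu$ (it is — $\nu$ is introduced as the density of a symmetric L\'evy measure) or one only claims the integrability half of \eqref{eq:levy-cond}, the symmetry being assumed throughout.

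Finally, for the radial addendum, $(v) \Rightarrow (i)$: if $\nu$ is radial and $\Hnu \neq \{0\}$, pick $0 \neq u \in \Hnu$, so $\int |\widehat u|^2 \psi < \infty$ with $\widehat u \not\equiv 0$; since $\psi$ is then radial (as shown in the remark following Theorem \ref{thm:symbol-bound}) and finite on a set of positive measure, it is finite at some $|\xi_0| \neq 0$, and radiality lets one run the same two-regime argument as above — or simply cite Theorem \ref{thm:symbol-bound}, whose lower bound $\psi(\xi) \geq c\int(1\land|h|^2|\xi|^2)\nu(h)\,\d h$ combined with $\psi(\xi_0) < \infty$ immediately yields $\int(1\land|h|^2)\nu(h)\,\d h < \infty$. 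This last route is the cleanest and I would use it. I expect the bookkeeping in the $(iv)\Rightarrow(i)$ tail estimate to be the only place requiring genuine care; everything else is Plancherel plus the already-established symbol bounds.
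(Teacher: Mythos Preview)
The paper itself does not give a proof of this theorem; it simply refers to \cite[Section 4]{Fog23s} and \cite[Section 2.1]{FK22}. Your overall plan is the natural one and is largely correct: the trivial implications, the symbol-bound argument for $(i)\Rightarrow(ii)$ via Proposition~\ref{prop:bound-levy-symbol}, the tail estimate via averaging (equivalently, $\|u-u(\cdot+h)\|_{L^2}^2\ge c'>0$ for $|h|$ bounded away from zero when $u\in C_c^\infty\setminus\{0\}$), and the use of Theorem~\ref{thm:symbol-bound} for the radial $(v)\Rightarrow(i)$ step are all sound.

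The one real gap is the near-origin half of $(iv)\Rightarrow(i)$ for \emph{non-radial} $\nu$. From $\psi(\xi_0)<\infty$ at a single $\xi_0$ you only obtain $\int_{|h|\le1}|\xi_0\cdot h|^2\nu(h)\,\d h<\infty$; the cone restriction you mention controls $\int_{\text{cone}}|h|^2\nu(h)\,\d h$ but not the full ball, and ``integrating over directions'' is unavailable without radiality --- indeed one can build $\nu$ concentrated near the hyperplane $\{\xi_0\cdot h=0\}$ so that $\psi(\xi_0)<\infty$ while $\int(1\land|h|^2)\nu(h)\,\d h=\infty$. The repair is straightforward: since $u\in C_c^\infty\setminus\{0\}$ has $\widehat u$ real-analytic and hence nonzero a.e., the matrix $A=\int_{|\xi|\le R}|\widehat u(\xi)|^2\,\xi\,\xi^{T}\,\d\xi$ is positive definite, whence $\|u-u(\cdot+h)\|_{L^2}^2\ge c|h|^2$ for all small $|h|$, and $\cE_\nu(u,u)<\infty$ then yields $\int_{|h|\le\delta}|h|^2\nu(h)\,\d h<\infty$ directly. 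With this correction your argument goes through.
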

For a proof we refer to \cite[Section 4]{Fog23s} and \cite[Section 2.1]{FK22}.  We will also need the following result inferring the stability of the space $\Hnu$ under a push-forward. This is a centrepiece in the establishment of the Euler-Lagrange equation which ultimately leads to the identification of the limit obtained from the minimizing movement scheme as a weak solution of \eqref{eq:main_eqn}.

\begin{theorem}
    \label{thm:push-bi-Lipschtz}
    Assume $\nu$ is a unimodal L\'{e}vy kernel, \emph{i.e.}, there exists $c\in (0,1)$ such that 
    \begin{align}
        \label{eq:nu-unimodal}
        \nu(x)\leq c\nu(y),
    \end{align}
    whenever $|x|\geq |y|$, and  
    such that the following scaling condition near the origin holds:
    \begin{align}
        \label{eq:double-cond-origin}
        \text{For every $\lambda>0$ there is $c_\lambda>0$ s.t. $\nu(\lambda h)\leq c_\lambda\nu(h)$, whenever $|h|\leq 1$}.
    \end{align}
    Finally, let $\zeta: \R^d\to\R^d$ be a bi-Lipschitz diffeomorphism on $\R^d$, \emph{i.e.}, there exists $\sigma >0$ such that
    $$
        \sigma|x-y|\leq |\zeta(x)-\zeta(y)|\leq \sigma^{-1}|x-y|.
    $$
    Then, for $u\in \Hnu$, we have $u\circ \zeta \in \Hnu$ and the following estimate holds
    \begin{align*}
        \|u\circ \zeta\|^2_{\Hnu} \leq 
        A_{\sigma,\nu} \big(1+\| \det D\zeta^{-1}  \|_{L^\infty(\R^d)}\big)^2\|u\|^2_{\Hnu},
    \end{align*}
    for some constant $A_{\sigma,\nu}>0$ only depending on $\sigma$ and $\nu$. 
\end{theorem}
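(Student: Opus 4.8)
The plan is to estimate the two pieces of $\|u\circ\zeta\|_{\Hnu}^2$ separately, namely the $L^2$-norm and the Gagliardo-type seminorm $\cE_\nu(u\circ\zeta,u\circ\zeta)$, by performing the change of variables $x'=\zeta(x)$, $y'=\zeta(y)$ in the double integral defining $\cE_\nu$. For the $L^2$-part this is immediate: $\int_{\R^d}|u(\zeta(x))|^2\,\d x = \int_{\R^d}|u(z)|^2\,|\det D\zeta^{-1}(z)|\,\d z \le \|\det D\zeta^{-1}\|_{L^\infty}\|u\|_{L^2}^2$. For the seminorm, after substitution we obtain
\begin{align*}
\cE_\nu(u\circ\zeta,u\circ\zeta)=\iil_{\R^d\R^d}\big(u(x')-u(y')\big)^2\,\nu\big(\zeta^{-1}(x')-\zeta^{-1}(y')\big)\,|\det D\zeta^{-1}(x')|\,|\det D\zeta^{-1}(y')|\,\d y'\,\d x'.
\end{align*}
The factor $|\det D\zeta^{-1}(x')||\det D\zeta^{-1}(y')|$ is bounded by $\|\det D\zeta^{-1}\|_{L^\infty}^2$, and since $\sigma|x-y|\le|\zeta(x)-\zeta(y)|\le\sigma^{-1}|x-y|$ we have $|\zeta^{-1}(x')-\zeta^{-1}(y')|\le\sigma^{-1}|x'-y'|$; we must therefore control $\nu(\zeta^{-1}(x')-\zeta^{-1}(y'))$ by a multiple of $\nu(x'-y')$.

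This comparison is exactly where unimodality and the scaling condition \eqref{eq:double-cond-origin} enter, and I expect it to be the main obstacle. Write $h=\zeta^{-1}(x')-\zeta^{-1}(y')$ and $k=x'-y'$; then $\sigma|k|\le|h|\le\sigma^{-1}|k|$. If $\sigma\le 1$ (so $\sigma^{-1}\ge 1$), I would split according to $|k|\le 1$ or $|k|>1$. On the region $|k|\le 1$: if $|h|\ge|k|$ then unimodality gives $\nu(h)\le c\,\nu(k)$ directly; if $|h|<|k|$, write $|h|=\lambda|k|$ with $\sigma\le\lambda<1$, and since $|h|\le|k|\le 1$ the scaling hypothesis \eqref{eq:double-cond-origin} applied with $\lambda=\sigma$ (monotonicity in $\lambda$ of the bound, or taking $c_\sigma$) together with unimodality yields $\nu(h)\le c_\sigma'\,\nu(k)$ for a constant depending only on $\sigma$ and $\nu$. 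On the region $|k|>1$: here $|h|\ge\sigma|k|>\sigma$; using unimodality one compares $\nu(h)$ to $\nu$ evaluated at a point of size $\min(|h|,\sigma)$ and then to $\nu(k)$, again only losing a constant. A cleaner route is to note that the combination of \eqref{eq:nu-unimodal} and \eqref{eq:double-cond-origin} yields a doubling-type inequality: there is $C_\sigma>0$ with $\nu(h)\le C_\sigma\,\nu(k)$ whenever $\sigma|k|\le|h|\le\sigma^{-1}|k|$, uniformly in $k\ne 0$; I would isolate this as the key estimate and prove it by the case analysis just sketched. The case $\sigma\ge 1$ is symmetric (swap the roles via $\zeta^{-1}$).

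Granting the comparison $\nu(\zeta^{-1}(x')-\zeta^{-1}(y'))\le C_\sigma\,\nu(x'-y')$, we conclude
\begin{align*}
\cE_\nu(u\circ\zeta,u\circ\zeta)\le C_\sigma\,\|\det D\zeta^{-1}\|_{L^\infty}^2\,\cE_\nu(u,u),
\end{align*}
and combining with the $L^2$-estimate and using $\|\det D\zeta^{-1}\|_{L^\infty}\le(1+\|\det D\zeta^{-1}\|_{L^\infty})$ and $1\le(1+\|\det D\zeta^{-1}\|_{L^\infty})^2$ gives
\begin{align*}
\|u\circ\zeta\|_{\Hnu}^2\le A_{\sigma,\nu}\big(1+\|\det D\zeta^{-1}\|_{L^\infty}\big)^2\|u\|_{\Hnu}^2
\end{align*}
with $A_{\sigma,\nu}=\max(1,C_\sigma)$, which also shows $u\circ\zeta\in\Hnu$. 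A minor technical point to address is justifying the change of variables and the use of Fubini for a general $u\in\Hnu$ rather than a smooth one; this follows by first arguing for $u\in C_c^\infty(\R^d)$ (which is dense in $\Hnu$ by \cite[Chapter 3]{guy-thesis}) and then passing to the limit, since the estimate is linear-continuous and the seminorm is lower semicontinuous under a.e.\ convergence (Fatou), exactly as in the proof that $\Hnu$ is a Hilbert space.
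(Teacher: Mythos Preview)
Your change-of-variables step and the use of unimodality to reduce the kernel to $\nu(\sigma(x'-y'))$ are exactly what the paper does. The gap is in the ``doubling-type'' inequality you extract afterwards: the claim that there is $C_\sigma$ with $\nu(h)\le C_\sigma\,\nu(k)$ whenever $\sigma|k|\le|h|\le\sigma^{-1}|k|$ is \emph{not} a consequence of the hypotheses when $|k|>1$. The scaling condition \eqref{eq:double-cond-origin} is only assumed for $|h|\le 1$, and unimodality goes the wrong way (it gives $\nu(k)\le c\,\nu(h)$ when $|h|<|k|$). Concretely, a kernel behaving like $|h|^{-d-2s}$ near the origin and like $e^{-|h|}$ at infinity satisfies all the hypotheses, yet $\nu(\sigma k)/\nu(k)\sim e^{(1-\sigma)|k|}\to\infty$ as $|k|\to\infty$. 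So your bound $\cE_\nu(u\circ\zeta,u\circ\zeta)\le C_\sigma\|\det D\zeta^{-1}\|_{L^\infty}^2\,\cE_\nu(u,u)$ is in general false; the homogeneous seminorm alone is not stable under composition with $\zeta$.

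The paper's fix is to treat the far region differently. After arriving at
\[
\cE_\nu(u\circ\zeta,u\circ\zeta)\le c\,\|\det D\zeta^{-1}\|_{L^\infty}^2\iil_{\R^d\R^d}|u(x)-u(y)|^2\,\nu(\sigma(x-y))\,\d y\,\d x,
\]
one splits at $|x-y|=1$. On $|x-y|\le 1$ the scaling condition gives $\nu(\sigma(x-y))\le c_\sigma\,\nu(x-y)$, hence a bound by $c_\sigma\,\cE_\nu(u,u)$. On $|x-y|\ge 1$ one does \emph{not} compare kernels at all; instead $|u(x)-u(y)|^2\le 2|u(x)|^2+2|u(y)|^2$ and the change of variable $h\mapsto\sigma h$ yield
\[
\iil_{|x-y|\ge 1}|u(x)-u(y)|^2\,\nu(\sigma(x-y))\,\d y\,\d x\le 4\sigma^{-d}\|u\|_{L^2(\R^d)}^2\int_{|h|>\sigma}\nu(h)\,\d h,
\]
which is finite. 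Thus the correct conclusion is $\cE_\nu(u\circ\zeta,u\circ\zeta)\le C\big(\|u\|_{L^2}^2+\cE_\nu(u,u)\big)$, and combining with your $L^2$ estimate gives the stated inequality for the full $\Hnu$-norm. Your sketch for $|k|>1$ (``compare $\nu(h)$ to $\nu$ at a point of size $\min(|h|,\sigma)$ and then to $\nu(k)$'') cannot be made to work and should be replaced by this argument.
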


\begin{proof}
Note that $|\zeta^{-1}(x)-\zeta^{-1}(y)|\geq \sigma |x-y|$ so that unimodality of $\nu$, see \eqref{eq:nu-unimodal}, implies 
\begin{align*}
    \nu(\zeta^{-1}(x)-\zeta^{-1}(y))\leq c\nu(\sigma(x-y)).
\end{align*}
By a change of variables and the unimodality of $\nu$ we get 
\begin{align*}
    \cE_\nu (u\circ \zeta,u\circ \zeta)
    &=\iil_{\mathbb{R}^d \mathbb{R}^d} |u\circ \zeta(x)-u\circ \zeta(y)|^2\nu(x-y)\d y \d x\\
    &=\iil_{\mathbb{R}^d \mathbb{R}^d} |u(x)-u(y)|^2|\det D\zeta^{-1}(x)| |\det D\zeta^{-1}(y)| \nu(\zeta^{-1}(x)-\zeta^{-1}(y)) \d y \d x\\
    &\leq c\||\det D\zeta^{-1}|\|^2_{L^\infty(\R^d)} \iil_{\mathbb{R}^d \mathbb{R}^d} |u(x)-u(y)|^2 \nu(\sigma(x-y)) \d y \d x.
\end{align*}
Furthermore, using the scaling condition near the origin, \eqref{eq:double-cond-origin}, we find that 
\begin{align*}
    \iil_{\R^d\R^d} &
    |u(x)-u(y)|^2 \nu(\sigma(x-y)) \d y \d x\\
    &= \iil_{|x-y|\geq 1} |u(x)-u(y)|^2 \nu(\sigma(x-y)) \d y \d x + \iil_{|x-y|\leq 1}\! |u(x)-u(y)|^2 \nu(\sigma(x-y)) \d y \d x\\
    & \leq 4\|u\|^2_{L^2(\R^d)} \int_{|h|>1} \nu(\sigma\,h) \d h+ c_\sigma\int_{\R^d}\int_{|h|\leq 1} |u(x)-u(x+h)|^2 \nu(h) \d h \d x\\
    &\leq 4\sigma^{-d}\|u\|^2_{L^2(\R^d)} \int_{|h|>\sigma} \nu(h) \d h+ c_\sigma \cE_\nu(u,u)\\
    &\leq \left(4\sigma^{-d}\int_{|h|>\sigma} \nu(h) \d h+ c_\sigma \right) \|u\|^2_{\Hnu}.
\end{align*}
On the other hand, we have 
\begin{align*}
    \int_{\R^d} |u\circ \zeta(x)|^2 \d x = \int_{\R^d} |u(x)|^2 |\det D\zeta^{-1}(x)| \d x\leq \| \det D\zeta^{-1} \|_{L^\infty(\R^d)} \|u\|^2_{L^2(\R^d)}. 
\end{align*}
Hence, combining the last two estimates, we obtain the desired estimate 
\begin{align*}
    \|u\circ \zeta\|^2_{\Hnu} \leq 
    A_{\sigma,\nu} \big(1 + \| \det D\zeta^{-1} \|_{L^\infty(\R^d)}\big)^2\|u\|^2_{\Hnu}. 
\end{align*}
\end{proof}

\subsection{Compact embedding}\label{subsec:assum-levy}
In this section, we establish the compact embedding of $\Hnu$ into $L^2_{\loc}(\R^d)$. We recall that $L^2_{\loc}(\R^d)$ is equipped with the topology of $L^2$-convergence on compact sets. Namely, a sequence $(u_n)_n$ converges in  $L^2_{\loc}(\R^d)$ if for every compact set $K\subset \R^d$ there is $u_K\in L^2(K)$ such that  $\|u_n-u_K\|_{L^2(K)}\to 0$ as $n\to \infty$. In this case, we say that $(u_n)_n$ converges in  $L^2_{\loc}(\R^d)$  to the function $u$ defined by $u|_K=u_K$ for every compact set.
\begin{remark}[$L^1$-L{\'e}vy kernels]
    \label{rem:L1-levy-kernels}
As noticed in Remark \ref{rem:comparison-with-L2}, if $\nu\in L^1(\R^d)$, then $\Hnu =L^2(\R^d)$ which is not locally compactly embedded into $L^2_{\loc}(\R^d)$. Thus, imposing $\nu$ to satisfy the L{\'e}vy condition, \eqref{eq:levy-cond}, as well as the non-integrability condition, $\nu\not\in L^1(\R^d)$, is paramount. 
\end{remark}

Let us start with the following lemma on the compactness of convolution operators.
\begin{lemma} 
    \label{lem:compactness-convolution} 
    Let $w \in L^1(\mathbb{R}^d)$ and $K\subset \R^d$ be compact. The convolution operator $T_w : L^2(\mathbb{R}^d )\to L^2(K)$, with $u \mapsto T_w=w*u$, is compact.
\end{lemma}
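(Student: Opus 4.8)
The statement to be proved is that for $w\in L^1(\mathbb{R}^d)$ and $K\subset\mathbb{R}^d$ compact, the convolution operator $T_w\colon L^2(\mathbb{R}^d)\to L^2(K)$, $u\mapsto (w*u)|_K$, is compact. The plan is to reduce to the case of smooth, compactly supported $w$ by a density argument, and then invoke the Fréchet--Kolmogorov (Riesz--Kolmogorov) compactness criterion in $L^2(K)$.

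First I would record that $T_w$ is bounded: by Young's inequality $\|w*u\|_{L^2(\mathbb{R}^d)}\le\|w\|_{L^1(\mathbb{R}^d)}\|u\|_{L^2(\mathbb{R}^d)}$, so in particular $\|T_w u\|_{L^2(K)}\le\|w\|_{L^1(\mathbb{R}^d)}\|u\|_{L^2(\mathbb{R}^d)}$, and moreover the map $w\mapsto T_w$ is continuous from $L^1(\mathbb{R}^d)$ into the Banach space of bounded operators $L^2(\mathbb{R}^d)\to L^2(K)$. Since the compact operators form a closed subspace of that operator space, and since $C_c^\infty(\mathbb{R}^d)$ is dense in $L^1(\mathbb{R}^d)$, it suffices to prove compactness of $T_w$ when $w\in C_c^\infty(\mathbb{R}^d)$.

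So fix $w\in C_c^\infty(\mathbb{R}^d)$ and let $B$ be the closed unit ball of $L^2(\mathbb{R}^d)$; I must show $T_w(B)$ is precompact in $L^2(K)$. Verify the two hypotheses of Fréchet--Kolmogorov on the family $\{w*u : u\in B\}$ restricted to $K$: (a) uniform boundedness in $L^2(K)$, which is the Young estimate above; (b) uniform equicontinuity of translates, i.e. $\sup_{u\in B}\|(w*u)(\cdot+\eta)-(w*u)(\cdot)\|_{L^2(K)}\to0$ as $\eta\to0$. For (b) write $(w*u)(\cdot+\eta)-(w*u)=(\tau_{-\eta}w-w)*u$ where $\tau_{-\eta}w(x)=w(x+\eta)$, so Young's inequality gives the bound $\|\tau_{-\eta}w-w\|_{L^1(\mathbb{R}^d)}\|u\|_{L^2(\mathbb{R}^d)}\le\|\tau_{-\eta}w-w\|_{L^1(\mathbb{R}^d)}$, and continuity of translation in $L^1$ (immediate for $w\in C_c^\infty$, using the mean value theorem and compact support) makes this tend to $0$ uniformly in $u\in B$. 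One should also check the "tail" condition of the criterion, but since $K$ is compact that is automatic: the functions are only being measured on the fixed bounded set $K$. Hence $T_w(B)$ is precompact, so $T_w$ is compact for $w\in C_c^\infty$, and the density/closedness argument upgrades this to all $w\in L^1(\mathbb{R}^d)$.

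The main obstacle, such as it is, is organizational rather than deep: one must be slightly careful that the relevant operator norm is that of operators $L^2(\mathbb{R}^d)\to L^2(K)$ (not $L^2\to L^2$) so that the approximation $w_n\to w$ in $L^1$ really forces $T_{w_n}\to T_w$ in operator norm and lets us pass compactness through the limit; and one must state the Fréchet--Kolmogorov criterion in the form adapted to a function space over the fixed compact set $K$ (equivalently, extend $w*u$ by zero outside a fixed neighborhood of $K$, which is legitimate because $w$ has compact support once we are in the reduced case, so the supports of $w*u$ for $u$ supported anywhere still cannot be controlled — hence the cleaner route is genuinely to use the $L^2(K)$ version of the criterion rather than the $L^2(\mathbb{R}^d)$ one). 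Everything else is a routine application of Young's inequality and continuity of translations.
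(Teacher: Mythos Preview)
Your proof is correct, but you take an unnecessary detour. The paper's argument is the same Riesz--Fr\'echet--Kolmogorov verification you give in your ``reduced'' case, except it applies it directly to an arbitrary $w\in L^1(\mathbb{R}^d)$ without first approximating by $C_c^\infty$. The point is that continuity of translation in $L^1$, namely $\|w(\cdot+h)-w\|_{L^1(\mathbb{R}^d)}\to 0$ as $h\to 0$, already holds for every $w\in L^1(\mathbb{R}^d)$, not just smooth compactly supported ones; so your equicontinuity estimate $(b)$ goes through verbatim for general $w$, and the density/closed-subspace machinery is superfluous. What your approach buys is a slightly more elementary justification of the translation continuity (via the mean value theorem on $C_c^\infty$), at the cost of an extra layer of approximation; the paper's version is shorter because it invokes the $L^1$ translation-continuity fact directly.
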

\begin{proof}
In virtue of Young's inequality, for very $u\in L^2(\mathbb{R}^d)$
\begin{align}
    \label{eq:wyoung-inequality}
    \|w*u\|_{L^2(\mathbb{R}^d)}\leq \|w\|_{L^1(\mathbb{R}^d)}\|u\|_{L^2(\mathbb{R}^d)}. 
\end{align}

Let $\mathrm{B}$ be a bounded subset of $L^2(\mathbb{R}^d)$ and set $M= \sup\limits_{u\in \mathrm{B}} \|u\|_{L^2(\mathbb{R}^d)}$. Then, \eqref{eq:wyoung-inequality} implies that $T_w(\mathrm{B})$ is a bounded subset of $L^2(\mathbb{R}^d)$, too, and we can control the shifts
\begin{align*}
\sup_{u\in \mathrm{B} }\int_{\mathbb{R}^d}\big|T_wu(x+h)-T_wu(x)\big|^2\d x 
&\leq M^2\|w(\cdot+h)-w(\cdot)\|^2_{L^1(\mathbb{R}^d)}\xrightarrow[]{|h|\to0 }0.
\end{align*}
The Riesz-Fr\'echet-Kolmogorov theorem implies that $T_w(\mathrm{B})|_{K}$ is relatively compact in $L^2(K)$ and the desired result follows. 
\end{proof}

\begin{theorem}[Characterization of local compact embeddings]
    \label{thm:local-compactness}
The embedding $H_\nu(\R^d) \hookrightarrow L^2_{\loc}(\R^d)$ is compact if and only if  $\nu\not\in L^1(\R^d)$.
\end{theorem}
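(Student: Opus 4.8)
The plan is to prove the two implications separately. The easy direction is the ``only if'' part: if $\nu\in L^1(\R^d)$, then by Remark \ref{rem:comparison-with-L2} we have $\Hnu=L^2(\R^d)$ with equivalent norms, and $L^2(\R^d)$ is certainly not compactly embedded in $L^2_{\loc}(\R^d)$ (translate a fixed bump to infinity: it stays bounded in $L^2$, but has no $L^2_{\loc}$-convergent subsequence since any candidate limit must vanish on every compact set while the $L^2$ mass is conserved). This handles one direction by contraposition.

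For the substantive ``if'' direction, assume $\nu\notin L^1(\R^d)$; combined with the L\'evy condition \eqref{eq:levy-cond}, this forces the singular part $\int_{|h|\le 1}\nu(h)\,\d h=\infty$ while $\int_{|h|\le 1}|h|^2\nu(h)\,\d h<\infty$. First I would split $\nu = \nu_0 + \nu_1$ where $\nu_1 = \nu\mathbbm{1}_{\{|h|>1\}}\in L^1(\R^d)$ and $\nu_0 = \nu\mathbbm{1}_{\{|h|\le1\}}$ carries the singularity. The $\nu_1$-part of the energy is harmless: $\cE_{\nu_1}(u,u)\le 4\|\nu_1\|_{L^1}\|u\|_{L^2}^2$, so a bounded sequence in $\Hnu$ is bounded in $L^2$ and in the seminorm associated to $\nu_0$. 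Now take a bounded sequence $(u_n)$ in $\Hnu$; after passing to a subsequence we may assume $u_n\rightharpoonup u$ weakly in $L^2(\R^d)$. The goal is to upgrade this to strong convergence in $L^2(K)$ for every compact $K$, which by the Riesz--Fr\'echet--Kolmogorov theorem reduces to a uniform control of translates: $\sup_n \|u_n(\cdot+h)-u_n\|_{L^2(\R^d)}\to 0$ as $|h|\to 0$, together with the uniform $L^2$-bound.

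The heart of the matter is therefore the translate estimate, and this is where I expect the main obstacle. The natural route is via the Fourier side: by Theorem \ref{thm:energy-fourier}, $\cE_\nu(u_n,u_n)=\int|\widehat{u_n}(\xi)|^2\psi(\xi)\,\d\xi$ is bounded, and $\|u_n(\cdot+h)-u_n\|_{L^2}^2 = \int|\widehat{u_n}(\xi)|^2\,|e^{i\xi\cdot h}-1|^2\,\d\xi$. One wants to dominate $|e^{i\xi\cdot h}-1|^2$ by $\omega(h)\,\psi(\xi)$ for some modulus $\omega(h)\to0$ — but this is exactly condition \eqref{eq:compactness-ass}, which is \emph{not} assumed here, so a purely pointwise-symbol argument is unavailable and one must genuinely use non-integrability of $\nu$. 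Instead I would argue directly in physical space: write, for $|h|\le 1$,
\begin{align*}
\|u_n(\cdot+h)-u_n\|_{L^2(\R^d)}^2 = \int_{\R^d}|u_n(x+h)-u_n(x)|^2\,\d x,
\end{align*}
and compare this with the averaged quantity $\fint_{B_\rho}\|u_n(\cdot+h)-u_n\|_{L^2}^2\,\d h$ over small balls, which is controlled by $\cE_{\nu_0}(u_n,u_n)$ once one knows a lower bound of the form $\nu_0(h)\ge c_\rho>0$ for $|h|$ in an annulus — and the non-integrability of $\nu_0$ is precisely what produces, for each $\eps>0$, a radius $\rho$ and a constant making the corresponding piece of the energy control the translate up to $\eps$. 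Making this averaging-and-lower-bound mechanism quantitative and uniform in $n$ is the crux; once the equicontinuity of translates is in hand, Riesz--Fr\'echet--Kolmogorov gives relative compactness in each $L^2(K)$, the weak limit $u$ is identified as the strong limit, and a diagonal argument over an exhaustion of $\R^d$ by compacts finishes the proof. I would also record that $u\in\Hnu$ by Fatou applied to the Fourier representation of $\cE_\nu$.
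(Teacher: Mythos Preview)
Your easy direction is correct and matches the paper. The hard direction, however, has a genuine gap at exactly the place you flag as ``the crux.'' You propose to obtain uniform equicontinuity of translates via an averaging argument that would require a lower bound $\nu_0(h)\ge c_\rho>0$ on some annulus; but no such lower bound is assumed, and non-integrability of $\nu$ near the origin does not imply it (think of $\nu$ supported on a thin set). Even granting such a bound, it is not clear how an \emph{averaged} control $\int_{B_\rho}\|u_n(\cdot+h)-u_n\|_{L^2}^2\,\d h\le C$ yields the \emph{pointwise-in-$h$} statement $\sup_n\|u_n(\cdot+h)-u_n\|_{L^2}\to0$ as $|h|\to0$ that Riesz--Fr\'echet--Kolmogorov needs. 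So the mechanism you sketch does not close.

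The paper avoids this difficulty by not attempting to prove translate-equicontinuity directly. Instead it truncates the singularity, setting $\nu_\delta:=\nu\,\mathds{1}_{B_\delta^c(0)}\in L^1(\R^d)$ and $w_\delta:=\nu_\delta/\|\nu_\delta\|_{L^1}$, and uses $w_\delta$ as a mollifier. By Jensen's inequality and evenness of $\nu$,
\[
\|u-w_\delta*u\|_{L^2(\R^d)}^2
=\int_{\R^d}\Big|\int_{\R^d}(u(x)-u(x+h))\,w_\delta(h)\,\d h\Big|^2\d x
\le \|\nu_\delta\|_{L^1}^{-1}\,\cE_\nu(u,u).
\]
Since $\nu\notin L^1(\R^d)$ forces $\|\nu_\delta\|_{L^1}\to\infty$ as $\delta\to0$, the restriction operator $R_K:\Hnu\to L^2(K)$ is approximated in operator norm by $R_K\circ T_{w_\delta}$, where $T_{w_\delta}u=w_\delta*u$; the latter is compact from $L^2(\R^d)$ to $L^2(K)$ by Lemma~\ref{lem:compactness-convolution}. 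A uniform limit of compact operators is compact, and the result follows. The key idea you were missing is to use the (truncated, normalized) L\'evy kernel itself as the approximate identity: this converts the nonlocal energy bound directly into an operator-norm estimate, sidestepping any pointwise lower bound on $\nu$.
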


\begin{proof} Assume $\nu\in L^1(\R^d)$. Then, by Remarks \ref{rem:comparison-with-L2} and \ref{rem:L1-levy-kernels}, $\Hnu= L^2(\R^d)$. Of course, $L^2(\R^d) \hookrightarrow L^2_{\loc}(\R^d)$ is not compact, which proves the first direction.

Next, we prove the converse, namely that any L{\'e}vy kernel lacking integrability at the origin gives rise to an energy space that compactly embeds into $L^2$ -- which is perhaps more surprising. To prove the claim, we assume $\nu\not \in L^1(\R^d)$ and show the embedding is compact, indeed. To this end, let $\delta>0$ be sufficiently  small such that, upon removing the singularity close to the origin by introducing $\nu_\delta :=\nu\mathds{1}_{B^c_\delta(0)}$, the resulting measure has finite mass, \emph{i.e.}, $0<\|\nu_\delta\|_{L^1(\mathbb{R}^d)}<\infty$. Rescaling its mass to unity, we introduce 
$$
    w_\delta(h) := \nu_\delta(h) \|\nu_\delta\|^{-1}_{L^1(\mathbb{R}^d)},
$$
which we shall use as a convolution kernel.

For fixed $u \in L^2(\mathbb{R}^d)$, by evenness of $\nu $ for all $x\in \mathbb{R}^d$ we have 
\begin{align*}
    T_{w_\delta} u(x)= \int_{\mathbb{R}^d} w_\delta(y) u(x-y)\d y = \int_{\mathbb{R}^d} w_\delta(y) u(x+y)\d y\,. 
\end{align*} 
Thus, by Jensen's inequality
\begin{align*}
    \|u- T_{w_\delta} u\|^2_{L^2(\mathbb{R}^d)} &= \int_{\mathbb{R}^d} \big| \int_{\mathbb{R}^d} [u(x)-u(x+h) ]w_\delta (h)\d h\big|^2\d x\\
    & \leq \|\nu_\delta\|^{-1}_{L^1(\mathbb{R}^d)} \iint\limits_{\mathbb{R}^d\mathbb{R}^d} |u(x)-u(x+h) |^2 \nu (h)\d h \d x \\
    &\leq \|\nu_\delta\|^{-1}_{L^1(\mathbb{R}^d)} \|u\|^2_{\Hnu} \,.
\end{align*}

\noindent For a compact set $K\subset\mathbb{R}^d$, we put $R_K u= u|_{K}$. Since  $\nu\not\in L^1(\R^d)$ it follows that 
\begin{align*}
    \|R_K -R_K T_{w_\delta} \|_{\mathcal{L}\big(\Hnu,\, L^2(K)\big)}\leq \|\nu_\delta\|^{-1}_{L^1(\mathbb{R}^d)}\xrightarrow[]{\delta\to 0}0\,.
\end{align*}
\noindent Thus the operator $R_K: \Hnu\to L^2(K)$, is compact since by Lemma \ref{lem:compactness-convolution}, the operator $R_K \circ T_{w_\delta}$ is also compact for every $\delta$.
\end{proof}

\noindent As a straightforward consequence of Theorem \ref{thm:local-compactness} we have the following. 

\begin{corollary}\label{cor:local-compatcness}
Assume  $\nu\not\in L^1(\R^d)$ and  $(u_n)_n$ be a bounded sequence in $\Hnu$. Then, there exist $u\in \Hnu$ and a subsequence $(u_{n_j})_j$ converging to $u$ in $L^2_{\operatorname{loc}}(\R^d)$. 
\end{corollary}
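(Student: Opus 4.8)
The plan is to deduce the corollary directly from Theorem \ref{thm:local-compactness}, whose proof already supplies the decisive quantitative input: for every compact $K\subset\R^d$ the restriction operator $R_K:\Hnu\to L^2(K)$ is compact. Since $L^2_{\loc}(\R^d)$ carries the topology of $L^2$-convergence on compact sets, and every compact set is contained in some ball of the exhaustion $\R^d=\bigcup_{j\in\N}B_j(0)$, it is enough to produce one subsequence that converges in $L^2(B_j(0))$ for every $j$; a standard diagonal extraction then does the job. The only mild subtlety — the point I would be most careful about — is precisely this interplay between the metrizable-but-not-normable topology of $L^2_{\loc}(\R^d)$ and the notion of compactness: one cannot appeal to a single compact operator, but must phrase matters as relative compactness of bounded sets and perform the diagonal argument. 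No interpolation or fine properties of $\nu$ beyond $\nu\notin L^1(\R^d)$ enter.

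Concretely, fix a bounded sequence $(u_n)_n$ in $\Hnu$ with $\sup_n\|u_n\|_{\Hnu}\le M$. Applying compactness of $R_{B_1(0)}$, extract a subsequence $(u_n^{(1)})_n$ converging in $L^2(B_1(0))$; inductively, applying compactness of $R_{B_{j+1}(0)}$ to $(u_n^{(j)})_n$, extract a further subsequence $(u_n^{(j+1)})_n$ converging in $L^2(B_{j+1}(0))$. The diagonal sequence $u_{n_j}:=u_j^{(j)}$ is, for each fixed $\ell$, eventually a subsequence of $(u_n^{(\ell)})_n$, hence converges in $L^2(B_\ell(0))$; consequently $(u_{n_j})_j$ is Cauchy in $L^2(K)$ for every compact $K\subset\R^d$. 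Defining $u$ by $u|_K:=\lim_j u_{n_j}|_K$, which is consistent across compact sets by uniqueness of $L^2$-limits, yields $u\in L^2_{\loc}(\R^d)$ with $u_{n_j}\to u$ in $L^2_{\loc}(\R^d)$.

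It remains to verify $u\in\Hnu$. Passing to a further subsequence (not relabelled), $u_{n_j}\to u$ almost everywhere in $\R^d$, hence $u_{n_j}(x)-u_{n_j}(y)\to u(x)-u(y)$ for almost every $(x,y)\in\R^d\times\R^d$. Fatou's lemma then gives $\|u\|_{L^2(\R^d)}^2\le\liminf_j\|u_{n_j}\|_{L^2(\R^d)}^2\le M^2$ and
\[
    \cE_\nu(u,u)\le \liminf_{j\to\infty}\iil_{\R^d\R^d}|u_{n_j}(x)-u_{n_j}(y)|^2\nu(x-y)\,\d y\,\d x=\liminf_{j\to\infty}\cE_\nu(u_{n_j},u_{n_j})\le M^2,
\]
so that $u\in\Hnu$, which completes the proof. (Alternatively, one could obtain $u\in\Hnu$ from the reflexivity of the Hilbert space $\Hnu$ by extracting a weakly convergent subsequence and identifying its weak limit with $u$, but the Fatou argument above is more direct and parallels the completeness proof for $\Hnu$.)
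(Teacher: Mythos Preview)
Your proof is correct. The paper does not actually supply a proof of this corollary, stating only that it is a ``straightforward consequence of Theorem \ref{thm:local-compactness}''; your diagonal extraction over the exhaustion $(B_j(0))_j$ together with the Fatou argument for membership in $\Hnu$ is precisely the standard way to make this inference explicit, and mirrors the Fatou step already used in the paper's completeness proof for $\Hnu$.
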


\medskip
It is worth mentioning that Theorem \ref{thm:local-compactness} was first proved in \cite[Theorem 1.1]{JW20}.  However,  earlier results using similar techniques also appeared in \cite[Proposition 6]{PZ17} for periodic functions on the  torus. The technique of removing the singularity is also used in \cite[Lemma 3.1]{BJ13} and \cite[Proposition 1]{BJ16}.  The proof of the global compactness on bounded domains can be found in \cite{FK22} or in \cite[Chapter 3]{guy-thesis} for the general case.

\subsection{Switching to Fourier notations}
In the remainder of this section, we shall switch to the Fourier side setting up similar nonlocal spaces as above defined for a general Fourier symbol,  $\phi: \R^d\setminus\{0\}\to (0, \infty)$, which is assumed to be continuous and symmetric, \emph{i.e.}, $\phi(\xi)=\phi(-\xi)$. 
\begin{definition}[Symbolic Sobolev Spaces] 
    We define the symbolic Sobolev spaces
    \begin{align*}
        \dot{H}^{\phi} (\R^d)= \big\{u\in\mathcal{S}'(\R^d)\,:\,  \widehat{u}\phi^{1/2}\in L^2(\R^d)\big\}, 
    \end{align*}
    and $H^{\phi} (\R^d) := L^2(\R^d)\cap \dot{H}^{\phi}(\R^d)$  as 
    \begin{align*}
        H^{\phi} (\R^d)= \big\{u\in L^2(\R^d)\,:\,  \widehat{u}\phi^{1/2}\in L^2(\R^d)\big\}. 
    \end{align*}
    The two spaces $\dot{H}^{\phi}(\R^d)$ and  $H^{\phi} (\R^d)$ are respectively equipped with the following (semi)norms 
    \begin{align*}
        \|u\|^2_{\dot{H}^{\phi} (\R^d)} 
        &= \int_{\R^d} |\widehat{u}(\xi)|^2\phi(\xi)\d \xi,
    \end{align*}
    as well as
    \begin{align*}
        \|u\|^2_{H^{\phi} (\R^d)} 
        &=  \int_{\R^d} |\widehat{u}(\xi)|^2\d \xi+ \int_{\R^d} |\widehat{u}(\xi)|^2\phi(\xi)\d \xi.
    \end{align*}
    It is an easy observation that, $\|\cdot\|_{\dot{H}^{\phi} (\R^d)}$ is the seminorm associated with the symmetric bilinear form $\big(\cdot,\cdot \big)_{\phi}$ given by
    \begin{align*}
        \big( u , v\big)_{\phi}: = \int_{\R^d} \widehat{u}(\xi) \overline{\widehat{v}}(\xi)  \phi(\xi)\, \d\xi.
    \end{align*}
    In the same vein, we define the space 
    $\dot{H}^{\phi^{-1}}(\R^d)$ as the dual space $(\dot{H}^{\phi}(\R^d))'$ endowed with the norm
    \begin{align*}
    \|u\|_{\dot{H}^{\phi^{-1}}(\R^d)} =\sup_{v\in \dot{H}^{\phi}(\R^d), \,v\neq0} \frac{\langle u, v\rangle}{\|u\|_{\dot{H}^{\phi}(\R^d)}} =  \Big(\int_{\R^d} |\widehat{u}(\xi)|^2\phi^{-1}(\xi)\d \xi\Big)^{1/2},
    \end{align*}
\end{definition}
where  $\langle u,v \rangle := \big( \widehat{u}\phi^{-1/2}, \widehat{v}\phi^{1/2}\big)_{L^2(\R^d)}$ for any  $u\in \dot{H}^{\phi^{-1}}(\R^d)$ and  $v\in \dot{H}^{\phi}(\R^d)$. It is worth mentioning that a class of Symbolic Sobolev spaces also called anisotropic Sobolev spaces associated with symbol of general L\'evy operators are studied, for instance in \cite{Jac94,Hoh94,Hoh98,EFJ21}, in the context of the study of the martingale problem for Markov processes.

Let us recall that
\begin{align*}
    \psi(\xi)= 2\int_{\R^d} (1-\cos(\xi\cdot h))\nu(h)\d h,
\end{align*}
for a L\'{e}vy kernel $\nu\in L^1(\R^d, 1\land|h|^2)$. It is apparent that the symbol $\psi$, respectively $\psi^{-1}$, will play a prominent role throughout the paper. Moreover, we stress the importance of the associated  symbols $ \widetilde{\psi}$, and $\psi^*$ given by
\begin{align*}
    \widetilde{\psi}(\xi) = |\xi|^2\psi^{-1}(\xi), \qquad \text{and} \qquad
\psi^* (\xi) = |\xi|^2\psi^{-2}(\xi)= \psi^{-1}(\xi)\widetilde{\psi}(\xi).
\end{align*}
In particular, note that we have $\Hnu= H^{\psi} (\R^d)$. 
\begin{remark}[Relation to fractional Sobolev spaces]\label{rem:symbol}
    For the standard fractional case $ \nu(h) = \frac{C_{d,s}}{2}|h|^{-d-2s}$, $s\in (0,1)$, the aforementioned symbols read
    \begin{align*}
        \psi(\xi) =|\xi|^{2s}, \qquad \text{and} \qquad \psi^{-1}(\xi)=|\xi|^{-2s},
    \end{align*}
    \emph{i.e.}, the symbol of the fractional Laplacian and the associated Riesz kernel, respectively. Moreover,
    \begin{align*}
        \widetilde{\psi}(\xi)=|\xi|^2\psi^{-1}(\xi)=|\xi|^{2(1-s)}, \qquad \text{and} \qquad \psi^*(\xi)= |\xi|^2\psi^{-2}(\xi)= |\xi|^{2(1-2s)}.
    \end{align*}
    The nonlocal spaces associated to these symbols simply become
    \begin{align*}
        \Hnu =\Hnup= H^s(\R^d), \quad \text{and} \quad \Hnui= (\Hnup)' = H^{-s}(\R^d).
    \end{align*}
    as well as
    \begin{align*}
        \Hnuw = H^{1-s}(\R^d), \qquad \text{and}\qquad  \Hnus = H^{1-2s}(\R^d).
    \end{align*}
\end{remark}

Let us now mention the compactness result with respect to the symbolic Sobolev $H^\phi(\R^d)$.
\begin{theorem}\label{thm:local-comp-symb}
The embedding $H^\phi(\R^d)\hookrightarrow L^2_{\loc}(\R^d)$ is compact, provided that $\phi$ satisfies 
\begin{align}
    \label{eq:symb-asump-compact}
    \sup_{\xi\in \R^d} \frac{1}{\phi(\xi)}|e^{i\xi\cdot h}-1|^2\xrightarrow{|h|\to0}0.
\end{align}
\end{theorem}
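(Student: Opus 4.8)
The idea is to mimic the proof of Theorem \ref{thm:local-compactness}, replacing the convolution-with-truncated-$\nu$ trick by a Fourier-multiplier mollification dictated directly by the hypothesis \eqref{eq:symb-asump-compact}. Fix a standard mollifier: let $\rho \in C_c^\infty(\R^d)$, $\rho \geq 0$, $\int \rho = 1$, $\widehat\rho(0) = (2\pi)^{-d/2}$, and set $\rho_\eps(h) = \eps^{-d}\rho(h/\eps)$, so that $\widehat{\rho_\eps}(\xi) = (2\pi)^{-d/2}\widehat\rho(\eps\xi)\cdot(2\pi)^{d/2}$ up to normalization; the point is $\widehat{\rho_\eps}$ is uniformly bounded, tends to $(2\pi)^{-d/2}$ pointwise, and $1 - (2\pi)^{d/2}\widehat{\rho_\eps}(\xi)$ is $O(\eps^2|\xi|^2)$ near $0$ and $O(1)$ globally. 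The convolution operator $T_\eps u = \rho_\eps * u$ is compact from $L^2(\R^d)$ into $L^2(K)$ for every compact $K$ and every $\eps>0$, by Lemma \ref{lem:compactness-convolution} (since $\rho_\eps \in L^1$). Denote by $R_K\colon H^\phi(\R^d)\to L^2(K)$ the restriction operator. As in the proof of Theorem \ref{thm:local-compactness}, it suffices to show $\|R_K - R_K T_\eps\|_{\mathcal L(H^\phi(\R^d), L^2(K))} \to 0$ as $\eps \to 0$, because then $R_K$ is a norm-limit of compact operators, hence compact, which is exactly the asserted compact embedding.

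The key estimate is the quantitative bound
\begin{align*}
    \|u - T_\eps u\|_{L^2(\R^d)}^2
    = \int_{\R^d} |\widehat u(\xi)|^2\, \bigl|1 - c_d\widehat\rho(\eps\xi)\bigr|^2 \,\d\xi
    \leq \Bigl(\sup_{\xi\in\R^d} \frac{\bigl|1 - c_d\widehat\rho(\eps\xi)\bigr|^2}{\phi(\xi)}\Bigr)
    \int_{\R^d} |\widehat u(\xi)|^2 \phi(\xi)\,\d\xi,
\end{align*}
where $c_d = (2\pi)^{d/2}$ normalizes $\widehat{\rho_\eps}$; the first equality is Plancherel together with $\widehat{T_\eps u} = c_d\widehat\rho(\eps\cdot)\,\widehat u$, and the inequality just pulls out the supremum. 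Thus $\|u - T_\eps u\|_{L^2(\R^d)}^2 \leq A(\eps)\,\|u\|_{H^\phi(\R^d)}^2$ with $A(\eps) := \sup_\xi \phi(\xi)^{-1}|1 - c_d\widehat\rho(\eps\xi)|^2$, and composing with the norm-one restriction $L^2(\R^d)\to L^2(K)$ gives $\|R_K - R_K T_\eps\|_{\mathcal L(H^\phi, L^2(K))}^2 \leq A(\eps)$.

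The crux is therefore to prove $A(\eps) \to 0$ as $\eps \to 0$, and this is where hypothesis \eqref{eq:symb-asump-compact} enters. Write $1 - c_d\widehat\rho(\eps\xi) = \int_{\R^d}\rho(z)\bigl(1 - e^{i\eps\xi\cdot z}\bigr)\,\d z$ (using $\int\rho = 1$ and expanding the Fourier transform), so by Jensen/Cauchy--Schwarz $|1 - c_d\widehat\rho(\eps\xi)|^2 \leq \int_{\R^d}\rho(z)\,|1 - e^{i\eps\xi\cdot z}|^2\,\d z$. Dividing by $\phi(\xi)$ and taking the supremum over $\xi$,
\begin{align*}
    A(\eps) \leq \int_{\R^d} \rho(z)\; \sup_{\xi\in\R^d}\frac{|e^{i\xi\cdot(\eps z)} - 1|^2}{\phi(\xi)}\;\d z.
\end{align*}
For each fixed $z$, the inner supremum tends to $0$ as $\eps\to 0$ by \eqref{eq:symb-asump-compact} (with $h = \eps z$, and $|\eps z|\to 0$); it is also bounded uniformly in $z \in \supp\rho$ and $\eps \in (0,1)$, since $|e^{i\xi\cdot h}-1|^2 \leq 4$ and, for $|h|$ bounded, $\sup_\xi\phi(\xi)^{-1}|e^{i\xi\cdot h}-1|^2$ is finite (the hypothesis gives a finite limsup as $|h|\to 0$, and on the compact annulus of remaining $h$-values one uses that $\phi$ is continuous and positive, so bounded below on any region where $|\xi|$ is bounded away from $0$, while for large $|\xi|$ again $\phi$ must be bounded below — here one may invoke that the relevant $\phi$ in applications satisfies $\phi(\xi)\geq c(1\wedge|\xi|^2)$, or simply assume, as is implicit, that the left side of \eqref{eq:symb-asump-compact} is a bona fide finite function of $h$). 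Dominated convergence then yields $A(\eps)\to 0$, completing the proof. The main obstacle is precisely this last interchange: one must be slightly careful that the integrand $z\mapsto \sup_\xi\phi(\xi)^{-1}|e^{i\eps\xi\cdot z}-1|^2$ is dominated uniformly in $\eps$ by a $\rho$-integrable function on $\supp\rho$, which is where the boundedness half of \eqref{eq:symb-asump-compact} (as a statement about a well-defined function of $h$ that vanishes at $0$) is used.
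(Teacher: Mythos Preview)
Your argument is correct, but it takes a more circuitous route than the paper. The paper does not mimic the proof of Theorem \ref{thm:local-compactness} at all; instead it applies the Riesz--Fr\'echet--Kolmogorov criterion directly. For $u$ in a bounded set $\mathbf{B}\subset H^\phi(\R^d)$, Plancherel gives
\[
\|u(\cdot+h)-u\|_{L^2(\R^d)}^2 = \int_{\R^d}|\widehat u(\xi)|^2\,|e^{i\xi\cdot h}-1|^2\,\d\xi \leq \Bigl(\sup_{\xi}\frac{|e^{i\xi\cdot h}-1|^2}{\phi(\xi)}\Bigr)\|u\|_{H^\phi(\R^d)}^2,
\]
so the hypothesis \eqref{eq:symb-asump-compact} yields uniform $L^2$-equicontinuity of translations over $\mathbf{B}$, and Kolmogorov finishes immediately. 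Your mollifier construction effectively reproduces this same Plancherel estimate one layer deeper (inside $A(\eps)$, after Jensen and the sup/integral swap), and then adds a dominated-convergence step to conclude. What your approach buys is structural parallelism with Theorem \ref{thm:local-compactness}; what the paper's approach buys is a two-line proof with no mollifier, no Lemma \ref{lem:compactness-convolution}, and no need to worry about dominating the integrand for larger $|h|$---the hypothesis is used exactly once, in its raw form. Incidentally, your dominated-convergence justification is slightly over-argued: since $\supp\rho\subset B(0,R)$, for $\eps$ small enough $|\eps z|$ lies in the regime where the hypothesis already bounds the inner supremum by a constant, so the ``compact annulus'' discussion is unnecessary.
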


\begin{proof}
Let $\mathbf{B}\subset H^\phi(\R^d)$ be a bounded subset, and let $M:=\sup_{u\in \mathbf{B}}\|u\|_{ H^\phi(\R^d)}<\infty$. For $u\in\mathbf{B}$, using Plancherel we have 
\begin{align*}
\|u(\cdot+h)-u\|^2_{L^2(\R^d)}
&= \int_{\R^d}|\widehat{u}(\xi)|^2|e^{i\xi\cdot h}-1|^2\d \xi \\
&\leq  \sup_{\xi\in \R^d} \frac{1}{\phi(\xi)}|e^{i\xi\cdot h}-1|^2\int_{\R^d}|\widehat{u}(\xi)|^2\phi(\xi)\d \xi\\
&\leq M \sup_{\xi\in \R^d} \frac{1}{\phi(\xi)}|e^{i\xi\cdot h}-1|^2. 
\end{align*} 
Hence $\mathbf{B}$ is a bounded subset $L^2(\R^d)$ and we have 
\begin{align*}
\sup_{u\in \mathbf{B}}
\|u(\cdot+h)-u\|^2_{L^2(\R^d)}\leq 
M \sup_{\xi\in \R^d} \frac{1}{\phi(\xi)}|e^{i\xi\cdot h}-1|^2\xrightarrow{|h|\to0}0. 
\end{align*} 
The sought compactness thus follows from the Riesz-Fr\'echet-Kolmogorov theorem. 
\end{proof}

\begin{remark}
Taking $\phi=\psi$, the equivalence in Theorem \ref{thm:local-compactness} the condition in \eqref{eq:symb-asump-compact}, i.e., 
\begin{align*}
\sup_{\xi\in \R^d} \frac{1}{\psi(\xi)}|e^{i\xi\cdot h}-1|^2\xrightarrow{|h|\to0}0
\end{align*}
immediately implies that $\nu\not\in L^1(\R^d)$. It would be  interesting to know if the converse holds true. Namely does the fact that $\nu\in L^1(\R^d, 1\land|h|^2) \setminus L^1(\R^d)$ imply that the above condition in \eqref{eq:symb-asump-compact}. In other words, in view of Theorem \ref{thm:local-compactness} and Theorem \ref{thm:local-comp-symb}, it is natural to ask the question whether the condition in \eqref{eq:symb-asump-compact} is a necessary and sufficient condition on $\phi$ so that the embedding $H^\phi(\R^d)\hookrightarrow L^2_{\loc}(\R^d)$ is compact?
\end{remark}

\section{Existence of solution to the minimizing movement scheme}\label{sect:exist}
As foreshadowed in the introduction, the construction of solutions to \eqref{eq:main_eqn} is achieved by employing a minimizing movement scheme in the set of probability measures equipped with the 2-Wasserstein distance; we refer the reader to \cite{Vil09,ABS21} for more details on the Wasserstein distance.  Throughout, let us denote by $\mathcal{P}(\R^d)$ the set of Borel probability measures on $\R^d$. We say that a sequence $(u_n)_n\subset \mathcal{P}(\R^d)$ converges narrowly to $u\in \mathcal{P}(\R^d)$, if 
\begin{align*}
\lim_{n\to \infty}  \int_{\R^d}  \phi(x)\d u_n (x) = \int_{\R^d} \phi(x)\d u(x),
\end{align*}
for all $\phi\in C_b(\R^d)$, where $C_b(\R^d)$ is the space of continuous and bounded function on $\R^d$.  Additionally, the space of Borel probability measures with finite second moment is defined as 
\begin{align*}
    \mathcal{P}_2(\R^d) := \left\{u \in \mathcal{P}(\R^d)\,:\, \int_{\R^d} |x|^2\d u(x) < \infty\right\}.
\end{align*}
It is well-known that this set is a complete, separable metric space when equipped with the 2-Wasserstein distance, $W$, defined by
\begin{align*}
    W(u_0, u_1) = \min_{\gamma \in \Gamma(u_0, u_1)} \left\{ \iint_{\R^d\times \R^d} |x-y|^2\d \gamma(x,y)\right\}^{1/2},
\end{align*}
where $\Gamma(u_0, u_1)$ denotes the set of all transport plans between $u_0$ and $u_1$, that is, the set of probability measures on the product space $\R^d \times \R^d$ with marginals $u_0$ and $u_1$, \emph{i.e.}, $(\pi_x)_\# \gamma=u_0$ and $(\pi_y)_\# \gamma=u_1$. Moreover \cite[Theorem 5.2]{ABS21}, if $u_0$ is absolutely continuous with respect to the Lebesgue measure, then $\gamma= (\operatorname{I}\times T_{u_0}^{u_1})_\# u$  is the minimizer for $W(u_0,u_1)$  where  $T_{u_0}^{u_1}$ is the  transport map such that $(T_{u_0}^{u_1})_\# u_0=u_1$, \emph{i.e.},  $u_1$ is the push forward of $u_0$ through $T_{u_0}^{u_1}$. Moreover we have 
\begin{align*}
W(u_0, u_1) = \min_{S_\# u_0=u_1} \Big\{ \int_{\R^d} |S(x)-x|^2\d u_0(x)\Big\}^{1/2}
: = \Big\{ \int_{\R^d} |T^{u_0}_{u_1}(x)-x|^2\d u_0(x)\Big\}^{1/2},
\end{align*}
Now, if both $u_0$ and $u_1$ have densities then 
\begin{align*}
T^{u_0}_{u_1}\circ T_{u_0}^{u_1}= \operatorname{I}\quad\text{$u_0$-a.e.\quad  and }\quad  T_{u_0}^{u_1} \circ T^{u_0}_{u_1}= \operatorname{I}\quad \text{$u_1$-a.e.}. 
\end{align*}

\begin{remark}
    Let us point out that the space $\Hnuid$ is naturally associated to the operator
    $\mathcal{F}:\mathcal{P}_2(\R^d)\to \R\cup \{\infty\}$,
    \begin{align*}
    u \mapsto \mathcal{F}(u)=\frac{1}{2} \|\psi^{-1/2} \widehat{u}\|^2_{L^2(\R^d)}= \frac{1}{2} \int_{\R^d} |\widehat{u}(\xi)|^2\psi^{-1}(\xi) \d \xi=\frac12\|u\|^2_{\Hnuid}. 
    \end{align*}
    Clearly, $\mathcal{F}(u)<\infty$ if and only if  $u\in \Hnuid\cap \mathcal{P}_2(\R^d),$ \emph{i.e.}, 
    $D(\cF)= \Hnuid\cap \mathcal{P}_2(\R^d)$.
\end{remark}
\begin{theorem}\label{thm:lower-semicontinuous}
Let $u_*\in \mathcal{P}_2(\R^d)$ and $\tau > 0$ be given. Then, the  Moreau-Yosida penalization mapping 
$$
u \mapsto \frac{1}{2\tau} W^2(u,u_*)+\mathcal{F}(u),
$$ 
is lower semi-continuous with respect to the narrow convergence in $\mathcal{P}_2(\R^d)$. Moreover, there exists a unique minimizer, $u \in \mathcal{P}_2(\R^d)$.
\end{theorem}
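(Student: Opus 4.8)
The plan is to prove the two assertions separately: lower semicontinuity of the penalized functional along narrowly convergent sequences, and then existence and uniqueness of the minimizer via the direct method.

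\textbf{Lower semicontinuity.} Suppose $u_n \to u$ narrowly in $\mathcal{P}_2(\R^d)$. I would treat the two summands individually. For the Wasserstein term, $u \mapsto W^2(u,u_*)$ is well known to be narrowly lower semicontinuous (it is in fact jointly lower semicontinuous in both arguments; see \cite{AGS08, ABS21}), so $\liminf_n W^2(u_n,u_*) \geq W^2(u,u_*)$. For the energy term $\mathcal{F}(u) = \frac12 \int_{\R^d} |\widehat u(\xi)|^2 \psi^{-1}(\xi)\,\d\xi$, the key point is that narrow convergence of probability measures implies pointwise convergence of the characteristic functions, i.e. $\widehat{u_n}(\xi) \to \widehat u(\xi)$ for every $\xi$ (up to the $(2\pi)^{-d/2}$ normalization), since $x \mapsto e^{-i\xi\cdot x}$ is bounded and continuous. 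Then $|\widehat{u_n}(\xi)|^2 \psi^{-1}(\xi) \to |\widehat u(\xi)|^2\psi^{-1}(\xi)$ pointwise, and Fatou's lemma gives $\liminf_n \mathcal{F}(u_n) \geq \mathcal{F}(u)$. Adding the two inequalities yields lower semicontinuity of the sum. Note this also shows $\mathcal{F}$ itself is narrowly lower semicontinuous, which is all that is needed.

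\textbf{Existence.} Apply the direct method. The functional $\Phi(u) := \frac{1}{2\tau}W^2(u,u_*) + \mathcal{F}(u)$ is bounded below by $0$, and it is proper: taking, e.g., $u = u_*$ mollified and truncated, or any element of $\Hnuid \cap \mathcal{P}_2(\R^d)$, gives a finite value (one should exhibit at least one such competitor; a Gaussian works since its Fourier transform decays and $\psi^{-1}(\xi) \lesssim |\xi|^{-2}$ near the origin by Remark~\ref{rem:comparable-growth}, while $\psi^{-1}$ is bounded at infinity, noting $\psi^{-1}(\xi) \le c_1^{-1}(1\land|\xi|^2)^{-1}$ — one must check integrability of $|\widehat u|^2\psi^{-1}$ near $0$, which holds for Schwartz $u$ with $\widehat u(0)=(2\pi)^{-d/2}$ only if $d \geq 3$ or $\psi^{-1}$ is less singular; in general one restricts to $u_0 \in \Hnuid$ as in the hypotheses of the main theorem, so properness is part of the standing assumption). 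Let $(u_n)_n$ be a minimizing sequence. The bound $\frac{1}{2\tau}W^2(u_n,u_*) \leq \Phi(u_n) \leq C$ gives a uniform bound on the second moments of $u_n$ (since $W(u_n, u_*)$ is bounded and $u_* \in \mathcal{P}_2$), hence $(u_n)_n$ is tight by Markov's inequality and Prokhorov's theorem yields a subsequence converging narrowly to some $u \in \mathcal{P}(\R^d)$; the uniform second moment bound, together with lower semicontinuity of $u\mapsto \int |x|^2\,\d u$, ensures $u \in \mathcal{P}_2(\R^d)$. By the lower semicontinuity established above, $\Phi(u) \leq \liminf_n \Phi(u_n) = \inf \Phi$, so $u$ is a minimizer.

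\textbf{Uniqueness.} This follows from strict convexity along generalized geodesics. The map $u \mapsto W^2(u,u_*)$ is $1$-convex (even $2$-convex in an appropriate sense) along the generalized geodesics in $\mathcal{P}_2(\R^d)$ with base $u_*$, as in \cite[Chapter 9]{AGS08}; and $\mathcal{F}$ is convex along these same geodesics — indeed $\mathcal{F}$ is convex along \emph{linear} interpolations $t \mapsto (1-t)u_0 + t u_1$ since it is a quadratic form with nonnegative symbol $\psi^{-1} \geq 0$, namely $\mathcal{F}((1-t)u_0+tu_1) \leq (1-t)\mathcal{F}(u_0)+t\mathcal{F}(u_1)$, but one needs displacement convexity; alternatively, and more robustly, one argues that if $u_0 \neq u_1$ were two minimizers, then along the generalized geodesic interpolant $u_t$ the strict $\tfrac{1}{\tau}$-convexity of $\tfrac{1}{2\tau}W^2(\cdot,u_*)$ forces $\Phi(u_{1/2}) < \tfrac12\Phi(u_0) + \tfrac12\Phi(u_1) = \inf\Phi$, a contradiction, provided $\mathcal{F}$ is at least convex (not necessarily strictly) along that same geodesic. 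The main obstacle is precisely this last point: verifying that $\mathcal{F}$ is convex along generalized geodesics based at $u_*$ in the Wasserstein sense, rather than merely along linear interpolations; I expect one invokes that $\mathcal{F}(u) = \tfrac12\int u\, (K_\nu * u)\,\d x$ is a nonlocal interaction energy with a positive-definite (though not necessarily convex) kernel, and uses the quadratic structure — writing $u_t = ((1-t)\mathrm{I} + t T)_\# u_*$ for the optimal map $T$ and expanding — combined with the fact that $\widehat{K_\nu} = \psi^{-1} \geq 0$ to get at worst convexity, which suffices when paired with the strict convexity of the Wasserstein term. If displacement convexity of $\mathcal{F}$ fails, one falls back on the following softer argument: any two minimizers $u_0, u_1$ satisfy the same Euler–Lagrange equation (to be derived later in the paper), and since the Wasserstein term contributes a strictly monotone operator while $\mathcal{F}'$ is linear and positive semidefinite, the difference must vanish — but for the purposes of this theorem, the generalized-geodesic convexity argument is cleanest and is the route I would write up.
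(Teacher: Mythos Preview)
Your lower-semicontinuity argument via Fatou's lemma is correct and in fact more direct than the paper's: the paper first extracts a weakly $L^2$-convergent subsequence of $U_n=\psi^{-1/2}\widehat{u_n}$, then invokes the Banach--Saks theorem to upgrade to strong $L^2$-convergence of Ces\`aro means, passes to an a.e.\ convergent subsequence, identifies the limit as $\psi^{-1/2}\widehat u$ using the pointwise convergence $\widehat{u_n}\to\widehat u$, and finally uses weak lower semicontinuity of the $L^2$-norm. Your route---pointwise convergence of characteristic functions plus Fatou on the nonnegative integrand $|\widehat{u_n}|^2\psi^{-1}$---achieves the same conclusion in one step. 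The existence argument is essentially identical to the paper's (direct method, tightness via the uniform second-moment bound from the $W^2$ term).

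Where you go off course is uniqueness. You reach for convexity along generalized geodesics and then (rightly) worry whether $\cF$ is displacement convex---it need not be, and you never resolve this. But none of that machinery is needed: both the paper and the simplest argument use \emph{linear} (flat) convexity on $\cP_2(\R^d)$. The map $u\mapsto W^2(u,u_*)$ is convex along $t\mapsto(1-t)u_0+tu_1$ because $(1-t)\gamma_0+t\gamma_1$ is an admissible coupling for $(1-t)u_0+tu_1$ and $u_*$ whenever $\gamma_i\in\Gamma(u_i,u_*)$. And $\cF$ is \emph{strictly} convex along the same interpolation, being a quadratic form with weight $\psi^{-1}>0$ a.e.\ (here $\psi(\xi)>0$ for $\xi\neq0$ under the standing hypotheses on $\nu$). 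Hence the sum is strictly convex in the usual sense and admits at most one minimizer. Drop the generalized-geodesic discussion and the Euler--Lagrange fallback; the one-line linear-convexity argument is what the paper has in mind.
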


\begin{proof} We begin by establishing the lower semi-continuity and show the existence and uniqueness of a minimizer later.
By the lower semi-continuity of the Wasserstein distance, \emph{cf.} \cite[Proposition 7.1.3]{AGS08}, it is sufficient to prove the lower semi-continuity of $u\mapsto \mathcal{F}(u)$, as the sum of two lower semi-continuous functions is lower semi-continuous. 
Now, let  $(u_n)_n \subset \mathcal{P}_2(\R^d)$ be narrowly convergent to $u \in \mathcal{P}_2(\R^d)$. 
Without loss of generality, we assume that $\mathcal{F}(u_n) < \infty$, uniformly in $n\in \N$, as, otherwise, the liminf-inequality is trivially satisfied. 
Next, let us set $U_n := \psi^{-1/2} \widehat{u}_n$ and observe, that the bound on $(\mathcal{F}(u_n))_n$ implies the uniform $L^2$-bound $\sup_n\|U_n\|_{L^2(\R^d)}<\infty$. 
Consequently, up to a subsequence,  $U_n \rightharpoonup U$, weakly converges in $L^2(\R^d)$. Next we show that $U(\xi) =\psi^{-1/2}(\xi)\widehat{u}(\xi)$. By the Banach-Saks Theorem, see\cite{Fog23BS} or \cite[Appendix A]{MR12}, there exists a further subsequence still denoted $(U_n)_n$ whose Ces{\'a}ro mean converges strongly in $L^2(\R^d)$, \emph{i.e.},
\begin{align*}
    V_n := \frac1n \sum_{i=1}^n U_i  \longrightarrow U.
\end{align*}
Passing to another subsequence, we have $V_n \to U$, almost everywhere in $\R^d$.  Simultaneously, this narrow convergence implies the pointwise convergence of $\widehat{u}_n$: $\widehat{u}_n(\xi)\to \widehat{u}(\xi)$ for all $\xi\in \R^d$, as $n\to \infty$. Therefore, we get 
\begin{align*}
U(\xi)= \lim\limits_{n\to \infty}V_n(\xi)= \frac1n \sum_{i=1}^n \psi^{-1/2} \widehat u_i = \lim\limits_{n\to \infty}\psi^{-1/2}(\xi)\widehat{u}_n(\xi) = \psi^{-1/2}(\xi)\widehat{u}(\xi),
\end{align*}
for almost every $\xi\in \R^d$. 
The weak lower semi-continuity of $\|\cdot\|_{L^2(\R^d)}$ with respect to pointwise a.e. convergence implies that 
\begin{align*}
 \mathcal{F}(u)= \|U\|^2_{L^2(\R^d)}\leq \liminf_{n\to \infty}  \|U_n\|^2_{L^2(\R^d)}= \liminf_{n\to \infty}  \mathcal{F}(u_n),
\end{align*}
which proves the lower semi-continuity of the Moreau-Yosida penalization, as claimed.
Now, the existence of a unique minimizer is a straightforward consequence of the direct method.
Indeed, $u\mapsto \frac{1}{2\tau} W^2(u,u_*)+\mathcal{F}(u)$ is lower semicontinuous. Moreover, it is coercive on $\mathcal{P}_2(\R^d)$ with respect to the narrow convergence -- each sublevel set $\{u\in \mathcal{P}_2(\R^d)\,:\, \frac{1}{2\tau}W^2(u,u_*)+\mathcal{F}(u)\leq M\},M\in\R$ is either empty or bounded in $\mathcal{P}_2(\R^d)$. It is well-known that any bounded set in $\mathcal{P}_2(\R^d)$ is relatively compact with respect to the narrow convergence. The uniqueness of the minimizer follows from the strict convexity of the functional $u\mapsto \frac{1}{2\tau} W^2(u,u_*)+\mathcal{F}(u)$. 
\end{proof}

\subsection{Minimizing movement scheme}
The minimizing movements scheme in the set of probability measures, originally introduced in \cite{JKO98} (see also \cite[Definition 2.0.2]{AGS08}), is defined as follows. 
\begin{definition}
\label{def:discrete}
Given $\tau > 0$ and $u_{0} \in D(\cF)$, we consider the sequence of discrete approximations $(u_{\tau}^{k})_{k \in \mathbb{N}}$ uniquely defined through the recursive scheme: $u_{\tau}^{0} := G_{\omega(\tau)}*u_{0}$, where $ G_t(x) = \frac{1}{(4t\pi)^{d/2}} e^{-\frac{|x|^2}{4t} }$ is the standard Gaussian heat kernel and $\omega(\tau)= -1/\log(\tau)$ if $\tau\in (0,1/2)$ and  $\omega(\tau)= 1/\log(2)$ if $\tau\in [1/2,\infty)$ and for $k\geq1$, 
\begin{align}
    \label{eq:discrete} 
    u_{\tau}^{k} \in \argmin_{u \in \mathcal{P}_2(\R^d)} \Big\{ \frac{1}{2 \tau} W^{2}\Big( u,  u_{\tau}^{k-1} \Big) + \mathcal{F}(u)\Big\}.
\end{align}
In other words $u_{\tau}^{k}$ is the unique element such that 
\begin{align}
    \label{eq:discrete-bis}
     \frac{1}{2 \tau} W^{2}\Big( u_{\tau}^{k},  u_{\tau}^{k-1} \Big) +\cF(u_{\tau}^{k}) 
    = \min_{u \in \mathcal{P}_2(\R^d)} \Big\{\frac{1}{2 \tau} W^{2}\Big(u,  u_{\tau}^{k-1} \Big) + \mathcal{F}(u)\Big\}.
\end{align}
We introduce the piecewise constant interpolation associated with the  sequence of minimizers defined as follows $ u_{\tau}: [0,\infty)\to \cP_2(\R^d)$, 
\begin{align*}
    u_{\tau} (0) :=  u_{\tau}^{0} \quad\text{and}\quad u_{\tau} (t) :=  u_{\tau}^{\lceil t/\tau\rceil} \quad\text{for $t>0$}. 
\end{align*}
Recall that the existence and uniqueness of each $u_{\tau}^{k}$ are guaranteed by  Theorem \ref{thm:lower-semicontinuous}.  Keep in mind that  given $a\in\R$ we denote $\lfloor a\rfloor = \max\{m\in \mathbb{Z}: m\leq a\}$  and  $\lceil a\rceil= \min\{m\in \mathbb{Z}: a\leq m\}$ so that $\lceil a\rceil= \lfloor a\rfloor +1$, $\lfloor a\rfloor \leq a<\lfloor a\rfloor +1$ and $\lceil a\rceil-1<a\leq \lceil a\rceil$.
\end{definition}

\noindent The next result follows the idea from \cite[Theorem 3.3]{LMS2018}. 
\begin{proposition}\label{prop:discrete} Let $u_0\in \cP_2(\R^d)$, $\tau>0$ and $(u_{\tau}^{k})_k$ be the sequence defined as in Eq. \eqref{eq:discrete}. 
\begin{enumerate}[$(i)$]
\item For all $k\geq 1$ we have
    \begin{align*}
        \frac{1}{2\tau}W^2(u_{\tau}^{k-1}, u_{\tau}^{k})\leq  \cF(u_{\tau}^{k-1})-\cF(u_{\tau}^{k}),
    \end{align*}
    and therefore $\cF(u_{\tau}^{k})\leq \cF(u_{\tau}^{k-1})\leq\cdots \leq \cF(u_0)$.
\item For all $N\geq 1$  we have 
\begin{align}
\label{eq:dist-uk-delta}
\begin{split}
\int_{\R^d} |x|^2u_{\tau}^{N}(x)\d x
\leq \frac{8d}{\log(2)}\left(1+\tau N\cF(u_0)+ \int_{\R^d} |x|^2\d u_0(x)\right)\,. 
\end{split}
\end{align}
\end{enumerate}
\end{proposition}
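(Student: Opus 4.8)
The plan is to derive both estimates directly from the minimality of each $u_\tau^k$ in \eqref{eq:discrete}, following the classical a priori estimates for minimizing movement schemes. For $(i)$, since $u_\tau^{k-1}\in\cP_2(\R^d)$ is an admissible competitor, evaluating the functional in \eqref{eq:discrete} at $u=u_\tau^{k-1}$ and using $W(u_\tau^{k-1},u_\tau^{k-1})=0$ gives at once
\begin{align*}
\frac{1}{2\tau}W^2(u_\tau^k,u_\tau^{k-1})+\cF(u_\tau^k)\leq\frac{1}{2\tau}W^2(u_\tau^{k-1},u_\tau^{k-1})+\cF(u_\tau^{k-1})=\cF(u_\tau^{k-1}),
\end{align*}
which rearranges into the asserted dissipation inequality. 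As $W^2\geq0$, this immediately yields $\cF(u_\tau^k)\leq\cF(u_\tau^{k-1})$ for every $k\geq1$, hence $\cF(u_\tau^k)\leq\cF(u_\tau^0)$ by iteration; the remaining link $\cF(u_\tau^0)\leq\cF(u_0)$ follows from the Fourier representation $\cF(u)=\frac12\int_{\R^d}|\widehat{u}(\xi)|^2\psi^{-1}(\xi)\,\d\xi$ together with the relation $\widehat{u_\tau^0}(\xi)=e^{-\omega(\tau)|\xi|^2}\widehat{u_0}(\xi)$, which gives $|\widehat{u_\tau^0}(\xi)|\leq|\widehat{u_0}(\xi)|$ pointwise. (If $\cF(u_0)=\infty$ there is nothing to prove.)

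For $(ii)$, I would first telescope the inequality of $(i)$: summing over $k=1,\dots,N$, discarding $\cF(u_\tau^N)\geq0$ and invoking $\cF(u_\tau^0)\leq\cF(u_0)$,
\begin{align*}
\frac{1}{2\tau}\sum_{k=1}^{N}W^2(u_\tau^{k-1},u_\tau^k)\leq\cF(u_\tau^0)-\cF(u_\tau^N)\leq\cF(u_0).
\end{align*}
The second ingredient is the identity $W^2(u,\delta_0)=\int_{\R^d}|x|^2\,\d u(x)$, valid because $u\otimes\delta_0$ is the only transport plan between $u$ and the Dirac mass at the origin. Running the triangle inequality for $W$ along $\delta_0,u_\tau^0,u_\tau^1,\dots,u_\tau^N$ and then applying Cauchy--Schwarz to the resulting sum,
\begin{align*}
\Big(\int_{\R^d}|x|^2 u_\tau^N(x)\,\d x\Big)^{1/2}
&\leq\Big(\int_{\R^d}|x|^2 u_\tau^0(x)\,\d x\Big)^{1/2}+\sum_{k=1}^{N}W(u_\tau^{k-1},u_\tau^k)\\
&\leq\Big(\int_{\R^d}|x|^2 u_\tau^0(x)\,\d x\Big)^{1/2}+\big(2\tau N\cF(u_0)\big)^{1/2},
\end{align*}
where the second estimate uses $\sum_{k=1}^N W(u_\tau^{k-1},u_\tau^k)\leq N^{1/2}\big(\sum_{k=1}^N W^2(u_\tau^{k-1},u_\tau^k)\big)^{1/2}$ and the telescoped bound above.

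It then remains to control the initial second moment, for which I would use the elementary identity $\int_{\R^d}|x|^2(G_t\ast u_0)(x)\,\d x=\int_{\R^d}|x|^2\,\d u_0(x)+2dt$ — the cross term vanishes by symmetry of $G_t$ and $\int_{\R^d}|x|^2G_t(x)\,\d x=2dt$ — applied with $t=\omega(\tau)$ and the crude bound $\omega(\tau)\leq 1/\log 2$ valid for every $\tau>0$; this yields $\int_{\R^d}|x|^2 u_\tau^0(x)\,\d x\leq\int_{\R^d}|x|^2\,\d u_0(x)+2d/\log 2$. Squaring the last display and using $(a+b)^2\leq2a^2+2b^2$ gives
\begin{align*}
\int_{\R^d}|x|^2 u_\tau^N(x)\,\d x\leq2\int_{\R^d}|x|^2\,\d u_0(x)+\frac{4d}{\log 2}+4\tau N\cF(u_0),
\end{align*}
and since $d\geq1$ and $\log 2<1$ each of the constants $2$, $4$ and $4d/\log 2$ is at most $8d/\log 2$, so the right-hand side is bounded by $\frac{8d}{\log 2}\big(1+\tau N\cF(u_0)+\int_{\R^d}|x|^2\,\d u_0(x)\big)$, which is \eqref{eq:dist-uk-delta}.

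I do not expect a genuine obstacle here: this is essentially the textbook a priori estimate for variational time-discretizations. The only points demanding a bit of care are the justification of $\cF(u_\tau^0)\leq\cF(u_0)$ via the Gaussian Fourier multiplier — needed both to close the monotonicity chain of $(i)$ at $\cF(u_0)$ and to replace $\cF(u_\tau^0)$ by $\cF(u_0)$ in the telescoped sum — and the numerical bookkeeping in $(ii)$, namely tracking the Gaussian second moment $2d\,\omega(\tau)$ and checking that the deliberately generous constant $8d/\log 2$ absorbs the constants $2$, $4$ and $4d/\log 2$ arising along the way.
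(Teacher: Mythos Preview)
Your proof is correct and follows essentially the same approach as the paper: test the scheme with $u_\tau^{k-1}$ for $(i)$, then for $(ii)$ combine the triangle inequality for $W$ from $\delta_0$ through $u_\tau^0,\dots,u_\tau^N$, Cauchy--Schwarz, and the telescoped energy bound, finally controlling the second moment of $u_\tau^0=G_{\omega(\tau)}*u_0$. The only cosmetic difference is that you use the exact identity $\int|x|^2(G_t*u_0)\,\d x=\int|x|^2\,\d u_0+2dt$, whereas the paper bounds this via the triangle inequality $W^2(G_t*u_0,\delta_0)\leq 2W^2(u_0,\delta_0)+2W^2(G_t,\delta_0)$; the resulting constants differ slightly but both fit under $8d/\log 2$.
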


\begin{proof}
$(i)$  Testing Eq. \eqref{eq:discrete-bis} with $u=u_{\tau}^{k-1}$ implies $W^2(u_{\tau}^{k-1}, u_{\tau}^{k})\leq  2\tau(\cF(u_{\tau}^{k-1}))-\cF(u_{\tau}^{k})$, whereas the estimate $\cF(u_{\tau}^{0})\leq \cF(u_0)$ is an obvious consequence of the fact that $0< \widehat{G}_{\omega(\tau)}\leq 1$.

$(ii)$ The triangle inequality and the estimates $\frac{1}{2\tau}W^2(u_{\tau}^{k-1}, u_{\tau}^{k})\leq  \cF(u_{\tau}^{k-1})-\cF(u_{\tau}^{k})$ imply 
\begin{align*}
    \int_{\R^d} |x|^2u_{\tau}^{N}(x)\d x
    = W^2(u_{\tau}^{N}, \delta_0)&
    \leq 2W^2(u_{\tau}^{0}, \delta_0) + 2\Big(\sum_{k=1}^N W(u_{\tau}^{k-1}, u_{\tau}^{k}) \Big)^2\\
    &\leq 2W^2(u_{\tau}^{0}, \delta_0) + 4\tau N\sum_{k=1}^N \frac{1}{2\tau}W^2(u_{\tau}^{k-1}, u_{\tau}^{k})\\
    &\leq2W^2(u_{\tau}^{0}, \delta_0) + 4\tau N \sum_{k=1}^N  \cF(u_{\tau}^{k-1})-\cF(u_{\tau}^{k})\\
    &= 2W^2(u_{\tau}^{0}, \delta_0) + 4\tau N\left(\cF(u_{\tau}^{0})-\cF(u_{\tau}^{N})\right)\\
    &\leq 2W^2(G_{\omega(\tau)}*u_0, \delta_0) + 4\tau N\cF(u_{\tau}^{0}).
\end{align*}
A standard computation reveals  that
\begin{align}
    \label{eq:dist-u0-delta}
    W^2(G_\tau*u, \delta_0)\leq 2W^2(u, \delta_0)+ 2W^2(G_\tau, \delta_0)= 2\int_{\R^d} |x|^2\d u(x)+ 4\tau d. 
\end{align}
We deduce from this, and the fact that $\omega(\tau)\leq \frac{1}{\log(2)}$, that 
\begin{align*}
\int_{\R^d} |x|^2u_{\tau}^{N}(x)\d x
= W^2(u_{\tau}^{N}, \delta_0)
&\leq 4\tau N\cF(u_0)+  4\int_{\R^d} |x|^2\d u_0(x)+ 8d\omega(\tau)\\
&\leq \frac{8d}{\log(2)}\Big(1+\tau N\cF(u_0)+ \int_{\R^d} |x|^2\d u_0(x)\Big).
\end{align*}
\end{proof}

Next, we give the definition of an absolutely continuous curve. 
\begin{definition}[{\hspace{-0.1ex}\cite[Definition 1.1.1]{AGS08}, \cite[Definition 9.1]{ABS21}}]
We say that a  curve $u : [0, \infty) \to \mathcal{P}_2(\R^d)$ belongs to $AC^{2} ([0, \infty),(\mathcal{P}_2(\R^d), W) )$ if  there exist $m\in L^2((0,\infty))$ such that 
\begin{align}
    \label{eq:absolutely-cont-esti}
    W(u(t_{1}), u(t_{2})) \leq \int_{t_{1}}^{t_{2}} m(t) \, \d t,\qquad\text{for every $0 \leq t_{1} < t_{2}<\infty$.}
\end{align}
It is worth mentioning that, if $u\in AC^{2} ([0, \infty),(\mathcal{P}_2(\R^d), W))$ then (see \cite[Theorem 1.1.2]{AGS08} or \cite[Theorem 9.2]{ABS21}) $u$ has a metric derivative $|u'|$ almost everywhere, that is, the following limit exists 
\begin{align*}
|u'|(t):=\lim_{s\to 0}\frac{W(u(t+s),u(t))}{|s|},
\end{align*}
for almost all $t>0$. Accordingly, the Lebesgue Differentiation Theorem  implies $|u'|(t)\leq m(t)$ for almost all $t>0$. Furthermore, $t\mapsto |u'|(t)$ turns out to be the minimal (smallest) among functions $m$ in $L^2((0,\infty))$ satisfying Eq. \eqref{eq:absolutely-cont-esti}. 
\end{definition}
\noindent The next result establishes the existence of an absolute continuous curve $u:[0, \infty)\to (\cP_2(\R^d),W)$, limit of the piecewise constant curves $(u_\tau)_\tau$ as $\tau\to 0$ and representing the minimizing movement (see \cite[Definition 2.0.6]{AGS08}) for the main Eq. \eqref{eq:main_eqn}. 
\begin{theorem}[Convergence of the Minimizing Movement Scheme]\label{thm:mini-mvt-scheme}
Let $u_0\in D(\cF)=\Hnuid\cap\cP_2(\R^d)$.  Let $u_{\tau}(t)= u_{\tau}^{\lceil t/\tau \rceil}$ defined as in Definition \ref{def:discrete}. Then, there holds
\begin{align*}
    &\lim_{\tau\to 0^+} \cF( u_{\tau}^{0})= \cF( u_{0}), \quad\text{and}\quad \limsup_{\tau\to 0^+} W( u_{\tau}^{0}, u_0)<\infty,
\end{align*}
as well as
\begin{align*}
    & u_{\tau}^{0}\to u_0, \quad \text{narrowly  as $\tau \to 0^+$}. 
\end{align*}

Moreover, there is a curve $u\in AC^2([0, \infty),(\cP_2(\R^d), W))$ and a subsequence $\tau_n\to0^+$, as $n\to\infty$, such that $u(0^+)= u_0$  and 
\begin{align*}
u_{\tau_n}(t)\to u(t) \quad \text{narrowly  as $n\to \infty$},
\end{align*}
for all $t>0$.
\end{theorem}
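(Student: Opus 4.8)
### Proof Plan

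The plan is to follow the classical strategy for convergence of the minimizing movement scheme, as in \cite[Chapter 2-3]{AGS08} and \cite[Theorem 3.4]{LMS2018}, adapting it to our nonlocal energy $\cF$. The starting point is Proposition \ref{prop:discrete}: item $(i)$ gives the one-step estimate $\tfrac{1}{2\tau}W^2(u_{\tau}^{k-1},u_{\tau}^{k})\leq \cF(u_{\tau}^{k-1})-\cF(u_{\tau}^{k})$ together with the monotonicity $\cF(u_{\tau}^{k})\leq\cF(u_0)$, and item $(ii)$ gives a uniform (in $\tau$, on compact time intervals) bound on the second moments. These are exactly the two ingredients needed to run a refined Ascoli-type argument.

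First I would handle the initial datum. Since $u_{\tau}^{0}=G_{\omega(\tau)}*u_0$ with $\omega(\tau)\to 0$ as $\tau\to 0^+$, and $\widehat{u_{\tau}^{0}}(\xi)=e^{-\omega(\tau)|\xi|^2}\widehat u_0(\xi)$, the monotone convergence theorem applied to $\cF(u_{\tau}^{0})=\tfrac12\int_{\R^d}e^{-2\omega(\tau)|\xi|^2}|\widehat u_0(\xi)|^2\psi^{-1}(\xi)\,\d\xi$ gives $\cF(u_{\tau}^{0})\to\cF(u_0)$; the bound $0<\widehat G_{\omega(\tau)}\le 1$ already gave $\cF(u_{\tau}^{0})\le\cF(u_0)$. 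For the Wasserstein statement, $W(u_{\tau}^{0},u_0)=W(G_{\omega(\tau)}*u_0,u_0)\le W(G_{\omega(\tau)},\delta_0)=\sqrt{2d\,\omega(\tau)}$ (convolution is a $1$-Lipschitz operation for $W$, or one couples via $T(x)=x+$ Gaussian), which is bounded and in fact tends to $0$, yielding also narrow convergence $u_{\tau}^{0}\to u_0$.

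Next, the core compactness argument. From Proposition \ref{prop:discrete}$(i)$, for $0\le s<t$ and the Cauchy–Schwarz (discrete Hölder) inequality,
\begin{align*}
W(u_{\tau}(s),u_{\tau}(t))\le\sum_{k=\lceil s/\tau\rceil+1}^{\lceil t/\tau\rceil}W(u_{\tau}^{k-1},u_{\tau}^{k})\le\Big(\sum_k 2\tau\big(\cF(u_{\tau}^{k-1})-\cF(u_{\tau}^{k})\big)\Big)^{1/2}\Big(\sum_k\tau\Big)^{1/2},
\end{align*}
so $W(u_{\tau}(s),u_{\tau}(t))^2\le 2\,\cF(u_0)\,(|t-s|+\tau)$, a uniform (generalized) Hölder-$1/2$ estimate. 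Combined with the uniform second-moment bound from Proposition \ref{prop:discrete}$(ii)$, which ensures each $\{u_{\tau}(t):\tau\in(0,1)\}$ is tight, hence narrowly relatively compact, a refined Arzelà–Ascoli theorem for curves in $(\cP_2(\R^d),W)$ (see \cite[Proposition 3.3.1]{AGS08} or \cite[Theorem 9.3]{ABS21}) produces a subsequence $\tau_n\to 0^+$ and a limit curve $u$ with $u_{\tau_n}(t)\to u(t)$ narrowly for every $t\ge 0$; passing to the limit $\tau\to 0$ in the Hölder estimate gives $W(u(s),u(t))^2\le 2\cF(u_0)|t-s|$, so $u\in AC^2([0,\infty),(\cP_2(\R^d),W))$ with $m\equiv\sqrt{2\cF(u_0)}\in L^2_{\loc}$. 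Finally $u(0^+)=u_0$ follows since $W(u_{\tau_n}(0^+),u_{\tau_n}^0)\to 0$ by the Hölder estimate together with $u_{\tau_n}^0\to u_0$.

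The main obstacle I anticipate is not any single step but the bookkeeping of passing from the piecewise-constant interpolant $u_{\tau}(t)=u_{\tau}^{\lceil t/\tau\rceil}$ (which is only "almost" a curve — it jumps at multiples of $\tau$) to a genuine absolutely continuous limit: one must be careful that the discrete Hölder estimate is stated with the harmless $+\tau$ error and that the Arzelà–Ascoli argument is applied to curves that are merely "asymptotically equi-continuous". The other delicate point is verifying that the limit is genuinely in $AC^2$ on all of $[0,\infty)$ with an $L^2$ (not merely $L^2_{\loc}$) metric speed — here the fact that $m$ can be taken constant equal to $\sqrt{2\cF(u_0)}$ only gives $L^2_{\loc}$, but this is all that is claimed and all that is needed, since the Hölder bound is global in time.
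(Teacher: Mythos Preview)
Your plan is essentially the paper's, with one substantive deviation that creates a gap relative to the stated theorem. For the absolute continuity of the limit curve, the paper does \emph{not} rely on the discrete H\"older estimate $W(u_\tau(s),u_\tau(t))^2\le 2\cF(u_0)(|t-s|+\tau)$. Instead it introduces the piecewise-constant discrete metric speed
\[
m_\tau(t)=\frac{W(u_\tau(t),u_\tau(t-\tau))}{\tau},
\]
shows from the telescoping energy inequality that $\int_0^\infty m_\tau^2\,\d t\le 2\cF(u_\tau^0)$, extracts a weak limit $m\in L^2((0,\infty))$, and then proves $\limsup_\tau W(u_\tau(t_1),u_\tau(t_2))\le\int_{t_1}^{t_2} m_\tau\,\d t\to\int_{t_1}^{t_2} m\,\d t$, finally invoking lower semicontinuity of $W$ under narrow convergence to pass this to $u$. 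This yields a witness $m\in L^2((0,\infty))$ globally.

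Your H\"older route gives only the constant bound $|u'|(t)\le\sqrt{2\cF(u_0)}$, hence $m\in L^\infty\subset L^2_{\loc}$ but \emph{not} $m\in L^2((0,\infty))$. Your claim that ``this is all that is claimed'' is incorrect: the paper's definition of $AC^2([0,\infty),(\cP_2,W))$, stated immediately before the theorem, explicitly requires $m\in L^2((0,\infty))$, and the theorem asserts $u\in AC^2([0,\infty),(\cP_2,W))$. So as written your argument proves only $u\in AC^2_{\loc}$. The repair is routine---replace the Cauchy--Schwarz H\"older bound by the $m_\tau$ argument above---but without it the proposal does not establish the full statement. All other ingredients (treatment of $u_\tau^0$ via $\widehat G_{\omega(\tau)}\le 1$ and the coupling bound $W^2(u_\tau^0,u_0)\le 2d\,\omega(\tau)$, the moment bound from Proposition~\ref{prop:discrete} for tightness, and the refined Arzel\`a--Ascoli theorem \cite[Proposition 3.3.1]{AGS08}) match the paper.
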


\begin{proof}
First of all, since $u_{\tau}^{0}= G_{\omega(\tau)}*u_0$ (recall that we denote by $ G_t(x)$ the heat kernel at time $t$, $G_t(x) = \frac{1}{(4t\pi)^{d/2}} e^{-\frac{|x|^2}{4t} }$) by definition, we have that $W^2(u_{\tau}^{0}, u_0)\leq 2d\omega(\tau)\to 0$ as $\tau\to0^+$. This implies on the one hand that $\limsup_{\tau\to 0^+} W( u_{\tau}^{0}, u_0)=0$, and a standard computation shows that $u_{\tau}^{0}\to u_0$ narrowly as $\tau\to 0^+$. Secondly, given that $\widehat{G}_{\omega(\tau)}\to 1$ and $0\leq \widehat{G}_{\omega(\tau)}\leq 1$ the Dominated Convergence Theorem implies 
\begin{align*}
\int_{\R^d} |\widehat{u}_{0}(\xi)|^2 |\widehat{G}_{\omega(\tau)}(\xi)|^2\psi^{-1}(\xi)\d \xi\to  
\int_{\R^d} |\widehat{u}_0(\xi)|^2\psi^{-1}(\xi)\d \xi,\quad\text{as $\tau\to0^+$}.
\end{align*}
Equivalently we get that $\cF( u_{\tau}^{0})\to\cF( u_0)$ as $\tau\to0^+$. Lastly, we want to show the existence of $u\in AC^2([0,\infty), (\cP_2(\R^d), W))$ which is a narrow limit of a subsequence $(u_{\tau_n})_n$. Accordingly, for $t>0$ we define $m_\tau(t)$ by 
\begin{align*}
    m_\tau(t):=\begin{cases}
        \dfrac{W(u_\tau(t), u_\tau(t-\tau))}{\tau},& t-\tau>0\\[1em]
        \dfrac{W(u_\tau(t), u_\tau(0))}{\tau},& t-\tau\leq 0. 
    \end{cases}
\end{align*}
The estimate $W^2(u_{\tau}^{k-1}, u_{\tau}^{k})\leq  2\tau(\cF(u_{\tau}^{k-1})-\cF(u_{\tau}^{k}))$ (see Proposition \ref{prop:discrete}) implies 
\begin{align*}
    \int_0^{\infty}m^2_\tau(t)\d t
    &= \sum_{k=0}^\infty \int_{\tau k}^{\tau(k+1)} \frac{W^2(u_\tau(t), u_\tau(t-\tau))}{\tau^2}\d t = 2\sum_{k=0}^\infty  \frac{W^2(u_{\tau}^{k+1}, u_{\tau}^{k})}{2\tau}\\
    &\leq 2\sum_{k=0}^\infty  \cF(u_{\tau}^{k})-\cF(u_{\tau}^{k+1}) = 2\lim_{N\to \infty} \cF(u_{\tau}^{0})-\cF(u_{\tau}^{N+1})\\
    &\leq 2\cF(u_{\tau}^{0}). 
\end{align*}
Due to the uniform boundedness of the family $(m_\tau)_\tau$ in $L^2((0,\infty))$, it converges weakly, up to a subsequence, to some $m\in L^2([0,\infty))$, as $\tau\to0^+$. Moreover, the weak lower semicontinuity of the $L^2$-norm implies 
\begin{align*}
    \int_{0}^\infty m^2(t)\d t\leq \liminf_{\tau\to0^+}\int_{0}^\infty m^2_\tau(t)\d t\leq 2\cF(u_0). 
\end{align*}
Now, for $0\leq t_1<t_2$ we set $N_i(\tau) := \lfloor t_i/\tau\rfloor$ so that $\tau N_i(\tau)\leq t_i<\tau (N_i(\tau)+1)$. Taking into account the fact that, $u_{\tau}(t_i)= u_{\tau}^{N_i}$ or $u_{\tau}(t_i)= u_{\tau}^{N_i+1}$ depending whether $t_i/\tau$ is an integer or not, the triangle inequality yields  
\begin{align*}
    W(u_{\tau}(t_1), u_{\tau}(t_2))
    &\leq 	W( u_{\tau}(t_1), u_{\tau}^{N_1(\tau)+1})+ \sum_{k=N_1(\tau)+1}^{N_2(\tau)}W(u_{\tau}^{k-1},u_{\tau}^{k})
    + W(u_{\tau}^{N_2(\tau)}, u_{\tau}(t_2))\\
    &\leq 	 \sum_{k=N_1(\tau)+1}^{N_2(\tau)+1}W(u_{\tau}^{k-1},u_{\tau}^{k})
    =\sum_{k=N_1(\tau)+1}^{N_2(\tau)+1} \int_{\tau k}^{\tau(k+1)} \frac{W(u_{\tau}(t),u_{\tau}(t-\tau))}{\tau}\d t\\
    &= \int_{\tau N_1(\tau)+\tau}^{\tau N_2(\tau)+\tau} m_\tau(t)\d t.
\end{align*}
Since $\tau N_i(\tau)\to  t_i$, the weak convergence of $(m_\tau)_\tau$ entails 
\begin{align}
    \label{eq:limsup-abs-cont}
    \limsup_{\tau\to 0^+}W(u_{\tau}(t_1), u_{\tau}(t_2))
    &\leq \lim_{\tau\to 0^+}\int_{\tau N_1(\tau)}^{\tau N_2(\tau)} m_\tau(t)\d t
    =\int_{t_1}^{t_2} m(t)\d t, 
\end{align}
Combining this  with the Cauchy-Schwartz inequality yields 
\begin{align}
    \label{eq:compactness-assumption1}
    \limsup_{\tau\to 0^+}W(u_{\tau}(t_1), u_{\tau}(t_2))
    &\leq \|m\|_{L^2((0, \infty))} |t_2-t_1|^{1/2}.
\end{align}
Next, let us  fix $T>0$. Estimate \eqref{eq:dist-uk-delta} (see Proposition \ref{prop:discrete}) and the fact that $\tau\lfloor t/\tau\rfloor\leq t$  yield
\begin{align*}
    \begin{split}
    W^2(u_{\tau}^{\lfloor t/\tau\rfloor}, \delta_0)&=\int_{\R^d} |x|^2u_{\tau}^{\lfloor t/\tau\rfloor}(x)\d x
    \leq R_T,
    \end{split}
\end{align*}
for all $0\leq t\leq T$, where 
\begin{align*}
R_T=\frac{8d}{\log(2)}\Big(1+T \cF(u_0)+ \int_{\R^d} |x|^2\d u_0(x)\Big).
\end{align*}
Consider  $\mathcal{K}_T=\{u\in \cP_2(\R^d): W^2(u, \delta_0)\leq R_T\}$ which is a bounded closed subset of $\cP_2(\R^d)$, and recall that $u_{\tau}(t)= u_{\tau}^{\lfloor t/\tau\rfloor}$. Note that
\begin{align}
    \label{eq:compactness-assumption2}
    \text{$\mathcal{K}_T$ is narrowly compact in $\cP_2(\R^d)$}, \quad\text{and}\quad u_{\tau}(t)\in \mathcal{K}_T,\,\,\,\text{for all $0\leq t<T$}.
\end{align}
In light of Eq. \eqref{eq:compactness-assumption1} and Eq. \eqref{eq:compactness-assumption2} and the refined version of the Arzel\'a-Ascoli Theorem (\emph{cf.} \cite[Proposition 3.3.1]{AGS08}) there is subsequence $\tau_n\to 0^+$ and  a curve $u: [0,\infty)\to (\cP_2(\R^d), W)$ such that 
$u_{\tau_n}(t)\to u(t)$ for almost all $t\geq0$. Estimate \eqref{eq:limsup-abs-cont} in conjunction with the narrow convergence of $(u_{\tau}(t))$ and the lower semicontinuity of $W$, see \cite[Proposition 7.1.3]{AGS08},  yield 
\begin{align*}
    W(u(t_1), u(t_2))\leq \liminf_{n\to\infty}W(u_{\tau_n}(t_1), u_{\tau_n}(t_2))\leq \int_{t_1}^{t_2} m(t)\d t.
\end{align*}
It follows that  $u\in AC^2([0, \infty), (\cP_2(\R^d), W))$ since $m\in L^2((0,\infty))$. In particular, since  $u_{\tau}^{0}=u_{\tau}(0)\to u_0$ narrowly as $\tau\to 0^+$ up to a modification of $u$ at $t=0$ we get $u_0= u(0^+)$.
\end{proof}

\section{Improved regularity of discrete solutions}\label{sec:flow-interchange}
In this section, we aim to establish better regularity properties of the solutions to minimization problem, Eq.
\eqref{eq:discrete}, using the flow interchange technique introduced by Matthes, McCann, and Savar{\'e} \cite{MMS09}. Let us first recall some basics of this technique. 
\begin{definition}[Displacement convex entropy]
Let $V\in C^1((0, \infty))\cap C([0,\infty))$ with $V(0)=0$ be a convex function such that 
$$
    \lim_{x\to 0^+}\frac{V(x)}{x^{\alpha}}>-\infty, \quad \text{for some}\   \alpha>\frac{d}{d+2},
$$
and the McCann  displacement convexity condition holds \cite{Mc97}, \emph{i.e.}, 
\begin{align*}
r\mapsto r^dV(r^{-d})\qquad\text{is convex and decreasing on $(0, \infty)$}. 
\end{align*}

In this case, the displacement convex entropy associated with $V$ is  the functional $\cV:\cP_2(\R^d) \to \R\cup\{\infty \}$ defined as follows 
\begin{align*}
\cV(u)=
\begin{cases}
\int_{\R^d} V(u(x))\d x, \quad &\text{ if $\d u(x)= u(x)\d x$}\\
\infty, \quad & \text{ otherwise}. 
\end{cases}
\end{align*}   
The effective domain of $\cV$ is denoted by $D(\cV)=\{u\in \cP_2(\R^d)\,:\, \cV(u)<\infty\}$.  It is worth mentioning that $\cV$ is lower semicontinuous with respect to the narrow convergence.  
\end{definition}
According to \cite[Theorem 11.1.4]{AGS08} see also \cite{MuSa20}, the displacement convex entropy $\cV$ generates a continuous semigroup $S_t:D(\cV)\to D(\cV)$
satisfying the following evolution variational inequality (EVI): 
\begin{align}
    \label{eq:evo-var-ineq}
    \frac12 W^2(S_tu, v)-\frac12W^2(u,v)\leq t(\cV(v)-\cV(S_tu)),\qquad\text{for all $u,v\in D(\cV),\, t>0$,}
\end{align}
where, by definition, $S_tu$ is the unique distributional (with respect to the narrow topology) solution of the Cauchy problem
\begin{align}
    \label{eq:cauhy-problem}
    \partial_t w= \Delta L_V(w), \qquad w(0)=u, 
\end{align}
with $L_V(u): = uV'(u)-V(u) $. The flow associated to $\mathcal V$, $(S_t)_t$, is a semigroup of contractions with respect to the Wasserstein distance $W$ and extends to $\overline{D(\cV)}=\cP_2(\R^d)$.
\begin{remark}\label{rem:regularizing effect}
It is worth emphasizing that 
the regularizing effect of the semigroup implies that $S_tu\in D(\cV)$ for all $u\in \cP_2(\R^d)$.
Analogously, if $u_0\in \cP_2(\R^d)$ then  the boundedness of $u^0_{\tau}=G_{\omega(\tau)}*u_0$  implies that we also have  $u^0_{\tau}\in D(\cV)$ for any displacement convex entropy. 
\end{remark}

\begin{example}\label{Ex:standard-entropy}
The standard example of a displacement convex entropy is obtained by considering the convex function $H(x)= x\log x-x$, $x> 0$ and $H(0)=0$, giving rise to the usual Boltzmann-Shannon entropy
\begin{align*}
    \cH(u)=\begin{cases}
    \int_{\R^d}u(x)\log u(x)- u(x)\d x, \quad &\text{ if $\d u(x)= u(x)\d x$}\\
\infty, \quad &\text{ otherwise}. 
\end{cases}	\end{align*}   
Indeed it is clear that $H\in C^1((0, \infty))\cap C([0,\infty))$, and it can be shown that $\lim_{x\to 0^+}\frac{H(x)}{x^{\alpha}}=0$, for all  $\alpha\in (\frac{d}{d+2}, 1).$ 
We also find that, $r^dH(r^{-d})= -d\log(r)- 1$  is convex and decreasing on $(0,\infty)$.
For completeness, let us point out the well-known fact that $L_H(u)= uH'(u)-uH(u)=u$, which,  according to Eq. \eqref{eq:cauhy-problem}, gives rise to heat semigroup since, here, $S_t u$ is the distributional solution to the heat equation $	\partial_t w= \Delta w $ and $ w(0)=u$.
\end{example}

\begin{definition}[Dissipation along the flow] The dissipation of $\cF$ along the flow $S_t$ associated to $\cV$ at the point $u\in D(\cF)$ is defined as 
\begin{align*}
\fD_{\cV}\big[\cF\big](u):= \limsup_{t\to0^+}\frac{ \cF(u) - \cF(S_t u)}{t}. 
\end{align*}	
\end{definition}
\begin{theorem}[Flow interchange]\label{thm:dissp-ineq-V}
Let $(u_{\tau}^{k})_{k}$ be a sequence solving Eq. \eqref{eq:discrete} and $\cV$ be a displacement convex entropy. If, for all $k\in \N$, $\fD_{\cV}[\cF](u_{\tau}^{k})>-\infty$, then $u_{\tau}^{k} \in D(\cV)$ and 
\begin{align*}
\fD_{\cV}\big[\cF\big](u_{\tau}^{k})\leq  \frac{\cV(u_{\tau}^{k-1} ) - \cV(u_{\tau}^{k})}{\tau}\qquad\text{for all $k\geq1$}. 
\end{align*}
\end{theorem}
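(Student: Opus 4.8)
The plan is to combine the minimizing-movement optimality with the EVI characterizing the flow $S_t$ associated to $\cV$. Fix $k\geq 1$ and abbreviate $u := u_\tau^k$, $u_* := u_\tau^{k-1}$. For $t>0$, the curve $S_t u$ is an admissible competitor in the minimization problem \eqref{eq:discrete-bis}, so by optimality of $u$,
\begin{align*}
\frac{1}{2\tau}W^2(u,u_*) + \cF(u) \leq \frac{1}{2\tau}W^2(S_t u, u_*) + \cF(S_t u),
\end{align*}
which rearranges to
\begin{align*}
\cF(u) - \cF(S_t u) \leq \frac{1}{2\tau}\big(W^2(S_t u, u_*) - W^2(u, u_*)\big).
\end{align*}
Now apply the evolution variational inequality \eqref{eq:evo-var-ineq} with the roles $u \leftsquigarrow u$, $v \leftsquigarrow u_*$: it gives $\tfrac12 W^2(S_t u, u_*) - \tfrac12 W^2(u,u_*) \leq t\big(\cV(u_*) - \cV(S_t u)\big)$. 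Substituting, dividing by $t>0$, and taking $\limsup_{t\to 0^+}$ yields
\begin{align*}
\fD_\cV[\cF](u) = \limsup_{t\to 0^+}\frac{\cF(u)-\cF(S_t u)}{t} \leq \frac{1}{\tau}\limsup_{t\to 0^+}\big(\cV(u_*) - \cV(S_t u)\big).
\end{align*}

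To finish I would control the right-hand side. Since $\cV$ is lower semicontinuous with respect to narrow convergence and $S_t u \to u$ narrowly as $t\to 0^+$ (continuity of the semigroup at $t=0$, using $\overline{D(\cV)} = \cP_2(\R^d)$), we get $\cV(u) \leq \liminf_{t\to 0^+}\cV(S_t u)$, hence $\limsup_{t\to 0^+}\big(\cV(u_*) - \cV(S_t u)\big) \leq \cV(u_*) - \cV(u)$. Combining the two displays gives exactly
\begin{align*}
\fD_\cV[\cF](u_\tau^k) \leq \frac{\cV(u_\tau^{k-1}) - \cV(u_\tau^k)}{\tau}.
\end{align*}
It remains to argue $u_\tau^k \in D(\cV)$, i.e. that $\cV(u_\tau^k) < \infty$: this is where the hypothesis $\fD_\cV[\cF](u_\tau^k) > -\infty$ enters. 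By Remark \ref{rem:regularizing effect}, $u_\tau^{k-1} = S_0 u_\tau^{k-1}$ or, more to the point, the regularizing effect guarantees that the relevant quantities are finite along the way; more carefully, one proceeds inductively: $u_\tau^0 = G_{\omega(\tau)}*u_0 \in D(\cV)$ by Remark \ref{rem:regularizing effect}, and if $u_\tau^{k-1}\in D(\cV)$ then the inequality above together with $\fD_\cV[\cF](u_\tau^k) > -\infty$ forces $\cV(u_\tau^k) \leq \cV(u_\tau^{k-1}) - \tau\,\fD_\cV[\cF](u_\tau^k) < \infty$, so $u_\tau^k \in D(\cV)$, closing the induction.

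The main obstacle is purely one of justifying the two limiting steps rigorously: first, that $S_t u$ is genuinely an admissible competitor and that $\cF(S_t u)$ is finite (so the optimality inequality is not vacuous), which should follow because $\cF$ is finite on $D(\cF)$ and the flow preserves enough integrability, or else the inequality is read as $+\infty \geq \fD_\cV[\cF](u)$ and still usable; and second, the narrow continuity $S_t u \to u$ as $t \to 0^+$ needed for the lower-semicontinuity step — this is part of the statement that $(S_t)_t$ is a continuous semigroup on $\overline{D(\cV)} = \cP_2(\R^d)$, cited from \cite[Theorem 11.1.4]{AGS08}. Everything else is a one-line rearrangement of the EVI against the scheme's optimality, so no heavy computation is involved.
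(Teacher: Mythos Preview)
Your argument is correct and is precisely the standard flow-interchange proof: combine the minimality of $u_\tau^k$ against the competitor $S_t u_\tau^k$ with the EVI \eqref{eq:evo-var-ineq}, pass to the $\limsup$, and close with narrow lower semicontinuity of $\cV$ together with the induction on $k$ (starting from $u_\tau^0\in D(\cV)$ via Remark~\ref{rem:regularizing effect}). The paper does not supply its own proof here but simply refers to \cite{MMS09}, where exactly this argument appears; the only technical caveat worth noting is that the EVI as stated in \eqref{eq:evo-var-ineq} is for $u,v\in D(\cV)$, whereas you use it with $u=u_\tau^k$ before knowing $u_\tau^k\in D(\cV)$ --- this is harmless because the semigroup and the EVI extend to $\overline{D(\cV)}=\cP_2(\R^d)$, as the paper itself records just after \eqref{eq:cauhy-problem}.
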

The statement and proof can be found, for example, in \cite{MMS09}. The next result infers that $ \fD_{\cH}( u_{\tau}^{k})>-\infty$, for any $k\in\N$, that is, we consider the flow interchange for the particular case $\cV=\cH$.

\begin{lemma}\label{lem:disp-entropy-H}
Let $u_0\in D(\cF)$ and $(u_{\tau}^{k})$ be defined as in Eq. \eqref{eq:discrete}. 
For any $k\geq0$ we have $u_{\tau}^{k}\in \Hnuw \cap D(\cH)$, (recalling that $\Hnuw=\Hnuwd\cap L^2(\R^d)$). Moreover we have 
\begin{align*}
    \|u_{\tau}^{k}\|^2_{\Hnuwd}\leq \fD_{\cH}[\cF](u_{\tau}^{k}),\quad\text{ for any $k\geq0$}.  
\end{align*}
\end{lemma}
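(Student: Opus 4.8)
The plan is to compute the dissipation of $\cF$ along the heat flow $S_t$ (which, by Example \ref{Ex:standard-entropy}, is exactly the flow generated by the entropy $\cH$), and to recognize that this dissipation is precisely the $\Hnuwd$-seminorm of $u_\tau^k$. Write $u = u_\tau^k$ and let $w(t) = S_t u$ solve $\partial_t w = \Delta w$, $w(0) = u$. On the Fourier side this is $\widehat{w}(t,\xi) = e^{-t|\xi|^2}\widehat{u}(\xi)$. Recalling that $\cF(w) = \tfrac12 \int_{\R^d} |\widehat{w}(\xi)|^2 \psi^{-1}(\xi)\,\d\xi$, we get
\begin{align*}
    \cF(u) - \cF(S_t u) = \frac12 \int_{\R^d} |\widehat{u}(\xi)|^2 \bigl(1 - e^{-2t|\xi|^2}\bigr)\psi^{-1}(\xi)\,\d\xi.
\end{align*}
Dividing by $t$ and letting $t \to 0^+$, the integrand converges pointwise to $|\widehat u(\xi)|^2 |\xi|^2 \psi^{-1}(\xi) = |\widehat u(\xi)|^2\widetilde\psi(\xi)$, since $(1 - e^{-2t|\xi|^2})/t \to 2|\xi|^2$. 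Using the elementary bound $1 - e^{-s} \leq s$ to dominate the difference quotient by $2|\widehat u(\xi)|^2|\xi|^2\psi^{-1}(\xi)$, an application of Fatou's lemma (or monotone convergence, since $t \mapsto (1-e^{-2t|\xi|^2})/t$ is monotone) yields
\begin{align*}
    \fD_{\cH}[\cF](u_\tau^k) = \limsup_{t\to 0^+}\frac{\cF(u_\tau^k) - \cF(S_t u_\tau^k)}{t} \geq \int_{\R^d} |\widehat{u_\tau^k}(\xi)|^2 \widetilde\psi(\xi)\,\d\xi = \|u_\tau^k\|^2_{\Hnuwd},
\end{align*}
which is the claimed inequality. (In fact the limit exists and equals the seminorm, but only the lower bound is needed.)

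It remains to justify the membership statements, namely $u_\tau^k \in \Hnuw \cap D(\cH)$. For $D(\cH)$: by Remark \ref{rem:regularizing effect}, the regularizing effect of the heat semigroup already gives $S_t u_\tau^k \in D(\cH)$ for $t > 0$, and one needs $u_\tau^k$ itself in $D(\cH)$ — this is exactly where Theorem \ref{thm:dissp-ineq-V} (flow interchange) enters: once $\fD_{\cH}[\cF](u_\tau^k) > -\infty$, that theorem asserts $u_\tau^k \in D(\cH)$ together with $\fD_{\cH}[\cF](u_\tau^k) \leq (\cH(u_\tau^{k-1}) - \cH(u_\tau^k))/\tau$. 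So I would first check $\fD_{\cH}[\cF](u_\tau^k) > -\infty$ — but this is immediate from the computation above, since $\cF(u_\tau^k) - \cF(S_t u_\tau^k) \geq 0$ (the factor $1 - e^{-2t|\xi|^2}$ is nonnegative and $\psi^{-1} > 0$), so the difference quotient is nonnegative and hence $\fD_{\cH}[\cF](u_\tau^k) \geq 0 > -\infty$. For the base case $k=0$, $u_\tau^0 = G_{\omega(\tau)} * u_0$ is bounded and Gaussian-tailed, so it lies in $D(\cH)$ directly (as noted in Remark \ref{rem:regularizing effect}). For $u_\tau^k \in \Hnuwd$: the seminorm $\|u_\tau^k\|^2_{\Hnuwd} = \int |\widehat{u_\tau^k}|^2\widetilde\psi\,\d\xi$ is finite because it is bounded above by $\fD_{\cH}[\cF](u_\tau^k)$, which in turn is controlled by $(\cH(u_\tau^{k-1}) - \cH(u_\tau^k))/\tau < \infty$ via the flow-interchange inequality (using that $u_\tau^{k-1} \in D(\cH)$, established inductively). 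Finally $u_\tau^k \in L^2(\R^d)$ follows since $u_\tau^k \in D(\cH)$ implies (by standard arguments, or directly from Remark \ref{rem:comparable-growth} giving $\widetilde\psi(\xi) \geq c_3(1 \land |\xi|^2)$, combined with finite mass controlling low frequencies) that $\widehat{u_\tau^k} \in L^2$; alternatively one invokes that $D(\cH) \cap \cP_2 \subset L^1 \cap L\log L$ together with the high-frequency control from $\Hnuwd$.

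I expect the main obstacle to be a matter of bookkeeping rather than a deep difficulty: one must run the induction on $k$ carefully so that the flow-interchange inequality is applied with both endpoints $u_\tau^{k-1}, u_\tau^k$ in $D(\cH)$, and one must make sure the interchange of limit and integral in the dissipation computation is fully rigorous (this is where the monotonicity of $t \mapsto (1 - e^{-2t|\xi|^2})/t$ and the linear bound $1 - e^{-s} \le s$ do the work, so no dominating function beyond $2|\widehat u|^2|\xi|^2\psi^{-1} = 2|\widehat u|^2\widetilde\psi$ is needed — but this is exactly the quantity one is trying to prove is integrable, so the cleanest route is Fatou for the lower bound, which is all the lemma claims). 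A secondary subtlety is making precise the sense in which $S_t u_\tau^k$ solves the heat equation and why $\cF(S_t u_\tau^k)$ has the stated Fourier representation for all $t > 0$; this follows from the explicit Gaussian convolution formula for $S_t$ on $\cP_2(\R^d)$ together with Plancherel, so it is routine.
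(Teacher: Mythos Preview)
Your proposal is correct and follows essentially the same approach as the paper: compute the dissipation of $\cF$ along the heat flow via the explicit Fourier representation $\widehat{S_t u}(\xi)=e^{-t|\xi|^2}\widehat{u}(\xi)$, use Fatou to obtain the lower bound $\fD_{\cH}[\cF](u_\tau^k)\geq \|u_\tau^k\|^2_{\Hnuwd}$, invoke the flow-interchange Theorem~\ref{thm:dissp-ineq-V} for $D(\cH)$-membership, and split into low and high frequencies (using $\|\widehat{u_\tau^k}\|_{L^\infty}\leq 1$ on $|\xi|\leq 1$ and the $\Hnuwd$-control on $|\xi|\geq 1$) for $L^2$. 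The only cosmetic difference is that the paper first establishes $t\mapsto\cF(w_t)\in C^1((0,\infty))\cap C([0,\infty))$ and writes the difference quotient via the fundamental theorem of calculus as $\tfrac{1}{t}\int_0^t\|w_r\|^2_{\Hnuwd}\,\d r$ before applying Fatou, whereas you apply Fatou directly to the explicit integrand $\tfrac{1}{2t}|\widehat{u}(\xi)|^2(1-e^{-2t|\xi|^2})\psi^{-1}(\xi)$; your route is slightly more economical and equally rigorous.
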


\begin{proof}
First, we need to show that  $t\mapsto \cF(w_t)$ belongs to $C^1((0,\infty))\cap C([0,\infty))$. 
By definition $u_{\tau}^{0}= G_{\omega(\tau)}*u_0$, and therefore $u_{\tau}^{0}\in D(\cF)$.
Recall that the flow associated to $\cH$ is the heat  semigroup, viz., $w_t:=S_t u_{\tau}^{k}= G_{t} *u_{\tau}^{k}$. In the Fourier variables, we have $\widehat{w}_t(\xi)= \widehat{G}_t(\xi) \widehat{u}_{\tau}^{k}(\xi)=\widehat{u}_{\tau}^{k}(\xi)e^{-t|\xi|^2}.$  
In particular, we have $\widehat{w}_t \in C^1((0,\infty))\cap C([0, \infty))$ and $\partial_t\widehat{w}_t(\xi)=-|\xi|^2\widehat{w}_t(\xi)$. It is not difficult to see that
\begin{align*}
    \left| \partial_t |\widehat{w}_t(\xi)|^2 \right| \ \psi^{-1}(\xi)= 
2|\widehat{w}_t(\xi)|^2|\xi|^2\psi^{-1}(\xi) =2|\widehat{w}_t(\xi)|^2\widetilde{\psi}(\xi). 
\end{align*}
For fixed  $\eps>0$ and all $t>\eps$, we can estimate 
\begin{align*}
    |\widehat{w}_t(\xi)|^2 \widetilde{\psi} (\xi)
    &\leq\max_{r\geq0} re^{-2rt}|\widehat{u}_{\tau}^{k}(\xi)|^2 \psi^{-1}(\xi)\\
    &=\frac{1}{2te}|\widehat{u}_{\tau}^{k}(\xi)|^2 \psi^{-1}(\xi)\\
    &\leq \frac{1}{2\eps e}|\widehat{u}_{\tau}^{k}(\xi)|^2 \psi^{-1}(\xi). 
\end{align*}
We know that  $\cF(u_{\tau}^{k})\leq \cF(u_{\tau}^{k-1})\leq \cdots \leq \cF(u_{\tau}^{0}) <\infty$, \emph{i.e.}, 
$ \||\widehat{u}_{\tau}^{k}|^2 \psi^{-1} \|_{L^1(\R^d)}= 2\cF(u_{\tau}^{k})<\infty$. In particular, we get $w_t= S_tu_{\tau}^{k}\in \Hnuwd$, since the previous estimate implies 
\begin{align*}
\|w_t\|^2_{\Hnuwd}\leq \frac{1}{2te}\|u_{\tau}^{k}\|^2_{\Hnuid}= \frac{1}{te} \cF(u_{\tau}^{k})<\infty. 
\end{align*}
Next, again for  $\eps>0$ fixed, we find that $ |\widehat{u}_{\tau}^{k}|^2 \psi^{-1}\in L^1(\R^d)$
and, by combining the two preceding estimates, we have
\begin{align}
    \label{eq:dominated-bound}
    \left|  \partial_t|\widehat{w}_t(\xi)|^2\right| \psi^{-1}(\xi)= |\widehat{w}_t(\xi)|^2\widetilde{\psi}(\xi)\leq \frac{1}{2\eps e}|\widehat{u}_{\tau}^{k}(\xi)|^2 \psi^{-1}(\xi), 
\end{align}
for all $t>\eps, \,\,\xi\in \R^d$
On the one hand, Leibniz rule together with the estimate in Eq. \eqref{eq:dominated-bound} implies that $t\mapsto \cF(w_t)$ is differentiable on $(\eps, \infty)$ and we have 
\begin{align*}
\frac{\d}{\d t}\cF(w_t)&
= \frac12\frac{\d}{\d t}\int_{\R^d} |\widehat{w}_t(\xi)|^2\psi^{-1}(\xi)\d \xi=\int_{\R^d} \operatorname{\mathcal{R}e}(\overline{\widehat{w}_t}(\xi)\, \partial_t\widehat{w}_t(\xi)) \psi^{-1}(\xi)\d \xi\\
&=-\int_{\R^d}|\widehat{w}_t(\xi)|^2|\xi|^2\psi^{-1}(\xi)\d \xi=-\|w_t\|^2_{\Hnuwd}. 
\end{align*}
On the other hand, by the dominated convergence theorem, Eq. \eqref{eq:dominated-bound} also implies that $t\mapsto -\|w_t\|^2_{\Hnuwd}=\frac{\d}{\d t}\cF(w_t)$ is continuous on $(\eps,\infty)$. Since $\eps>0$ is arbitrarily chosen we deduce that $t\mapsto \cF(w_t)$ belongs to $C^1((0,\infty))$ with 
\begin{align*}
    \frac{\d}{\d t}\cF(w_t)= -\|w_t\|^2_{\Hnuwd}=- \|S_t u_{\tau}^{k}\|^2_{\Hnuwd}. 
\end{align*}  
On the other side, $|\widehat{w}_t(\xi)|^2= e^{-2t|\xi|^2}|\widehat{u}_{\tau}^{k}(\xi)|^2\to |\widehat{u}_{\tau}^{k}(\xi)|^2$ as $t\to0^+$ and
$|\widehat{w}_t(\xi)|^2\leq|\widehat{u}_{\tau}^{k}(\xi)|^2.$ The dominated convergence theorem implies $\cF(w_t)\to \cF(u_{\tau}^{k})$ as $t\to0^+$; which proves the continuity of $t\mapsto \cF(w_t)$ at $t=0$.  Therefore, $t\mapsto \cF(w_t)$ belongs to $C^1((0,\infty))\cap C([0,\infty))$. 

Next, the fundamental theorem of calculus implies 
\begin{align*}
\frac{\cF(u_{\tau}^{k})-\cF(S_tu_{\tau}^{k})}{t}
&=\frac{\cF(w_0)-\cF(w_t)}{t} 
= \frac{1}{t}\int_0^t\|w_r\|^2_{\Hnuwd}\d r.
\end{align*}
Additionally, by Fatou's Lemma there holds
\begin{align*}
    \|u_{\tau}^{k}\|^2_{\Hnuwd}
    \leq \liminf_{t\to 0^+}\| w_t\|^2_{\Hnuwd}. 
\end{align*} 
Combining the two estimates, it follows that 
\begin{align*}
\fD_{\cH}[\cF](u_{\tau}^{k})=\limsup_{t\to0^+} \frac{\cF(u_{\tau}^{k})-\cF(S_tu_{\tau}^{k})}{t}
\geq \|u_{\tau}^{k}\|^2_{\Hnuwd}. 
\end{align*}
We deduce from Theorem \ref{thm:dissp-ineq-V} that, for any $k\geq1$, we have $u_{\tau}^{k}\in D(\cH)$ and \begin{align*}
\|u_{\tau}^{k}\|^2_{\Hnuwd} \leq \fD_{\cH}[\cF](u_{\tau}^{k}) \leq 
\frac{\cH(u_{\tau}^{k-1})- \cH(u_{\tau}^{k})}{\tau}. 
\end{align*}
Therefore, we may deduce $u_{\tau}^{k}\in \Hnuwd\cap D(\cH)$. Finally, it remains to show that $u_{\tau}^{k}\in L^2(\R^d)$. For $|\xi|\geq 1$, it follows from Proposition \ref{prop:bound-levy-symbol} that $\psi(\xi)\leq \kappa_\nu(1+|\xi|^2) \leq 2\kappa_\nu|\xi|^2$. Then, using Plancherel, we find
\begin{align*}
\int_{\R^d} |\widehat{u}^{k}_{\tau}(\xi)|^2\d \xi 
&= 
\int_{|\xi|\leq 1}  |\widehat{u}^{k}_{\tau}(\xi)|^2\d \xi+ \int_{|\xi|>1} |\widehat{u}^{k}_{\tau}(\xi)|^2\psi(\xi)\psi^{-1}(\xi) \d \xi \\
&\leq |B_1(0)|\|\widehat{u}^{k}_{\tau}\|^2_{L^\infty(\R^d)}+ 2\kappa_\nu\int_{|\xi|>1}  |\widehat{u}^{k}_{\tau}(\xi)|^2|\xi|^2\psi^{-1}(\xi) \d \xi\\
&\leq |B_1(0)|\|\widehat{u}^{k}_{\tau}\|^2_{L^\infty(\R^d)}+ 2\kappa_\nu 
\|u_{\tau}^{k}\|^2_{\Hnuwd}
\end{align*} 
where we also used the fact that $u^{k}_{\tau}$ is a probability density. Hence, we deduce $u^{k}_{\tau}\in L^2(\R^d)$, and therefore $u^{k}_{\tau}\in \Hnuw= \Hnuwd\cap L^2(\R^d)$.
\end{proof}
\medskip 

The following result is an immediate consequence of Theorem \ref{thm:dissp-ineq-V} and Lemma \ref{lem:disp-entropy-H}. 
\begin{corollary}\label{cor:disp-entropy-H}
Let $u_0\in D(\cF)$ and $(u_{\tau}^{k})_k$ be defined as in Eq. \eqref{eq:discrete}. 
Then $u_{\tau}^{k}\in \Hnuw\cap D(\cH)$ for any $k\geq0$. Moreover, we have 
\begin{align*}
\|u_{\tau}^{k}\|^2_{\Hnuwd} \leq 
\frac{\cH(u_{\tau}^{k-1})- \cH(u_{\tau}^{k})}{\tau}, \quad\text{ for any $k\geq1$}.
\end{align*}
In particular $ \cH(u_{\tau}^{k})\leq \cH(u_{\tau}^{k-1})$. 
\end{corollary}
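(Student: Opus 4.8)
The plan is to obtain this purely by concatenating the two results just established, so the argument is short. First I would recall from Lemma~\ref{lem:disp-entropy-H} that, under the hypothesis $u_0\in D(\cF)$, every iterate $u_\tau^k$ of the scheme \eqref{eq:discrete} lies in $\Hnuw\cap D(\cH)$ for $k\ge 0$ and satisfies the dissipation lower bound $\|u_\tau^k\|^2_{\Hnuwd}\le \fD_{\cH}[\cF](u_\tau^k)$. Since a squared seminorm is nonnegative, this immediately gives $\fD_{\cH}[\cF](u_\tau^k)\ge 0>-\infty$ for every $k\in\N$, which is precisely the hypothesis needed to feed into the flow-interchange estimate.

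Next I would apply the flow-interchange inequality, Theorem~\ref{thm:dissp-ineq-V}, with the displacement convex entropy $\cV=\cH$ (admissible by Example~\ref{Ex:standard-entropy}, where the associated flow $S_t$ is the heat semigroup). Since $\fD_{\cH}[\cF](u_\tau^k)>-\infty$ for all $k$ has just been verified, the theorem yields, for $k\ge 1$,
\[
\fD_{\cH}[\cF](u_\tau^k)\le \frac{\cH(u_\tau^{k-1})-\cH(u_\tau^k)}{\tau}.
\]
Chaining this with the lower bound from Lemma~\ref{lem:disp-entropy-H} produces the claimed inequality $\|u_\tau^k\|^2_{\Hnuwd}\le \bigl(\cH(u_\tau^{k-1})-\cH(u_\tau^k)\bigr)/\tau$ for $k\ge 1$, while the membership statement $u_\tau^k\in\Hnuw\cap D(\cH)$ is exactly what the lemma delivers. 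The last assertion $\cH(u_\tau^k)\le\cH(u_\tau^{k-1})$ then follows by discarding the nonnegative term $\|u_\tau^k\|^2_{\Hnuwd}$ on the left-hand side.

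There is no genuine obstacle here: all the analytic content---differentiability of $t\mapsto\cF(S_tu_\tau^k)$ along the heat flow, the dominated-convergence and Fatou steps, and the invocation of the flow-interchange inequality itself---was already discharged in the proof of Lemma~\ref{lem:disp-entropy-H}. This corollary merely repackages the conclusion with the intermediate quantity $\fD_{\cH}[\cF](u_\tau^k)$ eliminated, in the telescoping form that will be summed over $k$ in Section~\ref{sec:convergence-discrete} to extract uniform $L^2(0,T;\Hnuw)$ bounds on the discrete curves.
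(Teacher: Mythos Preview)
Your proposal is correct and matches the paper's approach exactly: the paper states that the corollary is an immediate consequence of Theorem~\ref{thm:dissp-ineq-V} and Lemma~\ref{lem:disp-entropy-H}, and indeed the chained inequality $\|u_{\tau}^{k}\|^2_{\Hnuwd} \leq \fD_{\cH}[\cF](u_{\tau}^{k}) \leq (\cH(u_{\tau}^{k-1})- \cH(u_{\tau}^{k}))/\tau$ already appears verbatim inside the proof of Lemma~\ref{lem:disp-entropy-H}. Your write-up simply makes explicit the one-line deduction the paper leaves to the reader.
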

Next, we use the lifting (perturbation) of the entropy technique introduced in \cite{MMS09} to show that $u^k_\tau$ is in the domain of every displacement convex entropy. 
\begin{theorem}\label{thm:general-flow-interchange}
Assume that $\widetilde{\psi}(\xi)=|\xi|^2\psi^{-1}(\xi)$ is the symbol associated with a radial  L\'{e}vy kernel $\widetilde{\nu}$. 
Let $(u^k_\tau)_k$ be the sequence of  Definition \ref{def:discrete}. 
Let $\cG$ be a displacement convex entropy with density function $G$. Then, for any $k\geq0$,  $u^k_\tau\in D(\cG)$ and we have 
\begin{align*}
    0\leq\big(u^k_\tau, L_G(u^k_\tau)\big)_{\widetilde{\psi}}
    \leq \mathscr{D}_{\cG}[\cF](u^{k}_\tau)
    \leq \frac{\cG(u^{k-1}_\tau)- \cG(u^{k}_\tau)}{\tau}. 
\end{align*}
\end{theorem}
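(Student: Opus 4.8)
The plan is to reproduce, on the Fourier side, the argument of Lemma~\ref{lem:disp-entropy-H}, with the heat semigroup replaced by the (nonlinear) semigroup $S_t$ generated by $\cG$, which solves $\partial_t w=\Delta L_G(w)$, $w(0)=u^k_\tau$, with $L_G(r)=rG'(r)-G(r)$; write $w_t:=S_tu^k_\tau$. The algebraic heart is that $L_G$ is nondecreasing: $L_G'(r)=rG''(r)\ge 0$ by convexity of $G$, and $L_G(0)=0$. Hence for any nonnegative $v$ the integrand $(v(x)-v(y))(L_G(v(x))-L_G(v(y)))\,\widetilde{\nu}(x-y)$ is pointwise nonnegative, so $\cE_{\widetilde{\nu}}(v,L_G(v))\ge 0$; and since $\widetilde{\psi}$ is, by hypothesis, the symbol of the radial L\'evy kernel $\widetilde{\nu}$, Theorem~\ref{thm:energy-fourier} applied to $\widetilde{\nu}$ identifies $\cE_{\widetilde{\nu}}(v,L_G(v))=\big(v,L_G(v)\big)_{\widetilde{\psi}}$ (once both quantities are finite). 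This will yield the first inequality of the chain.

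For $t>0$ the regularizing effect of the $\cG$-flow (Remark~\ref{rem:regularizing effect}, together with the smoothing of porous-medium-type equations) renders $w_t$ a bounded nonnegative density in $D(\cG)\cap\Hnuwd$ with $L_G(w_t)\in\Hnuwd$. Passing to the Fourier side, $\widehat{\partial_t w_t}(\xi)=-|\xi|^2\widehat{L_G(w_t)}(\xi)$, and, using these bounds to differentiate under the integral on every interval $[\eps,\infty)$ exactly as in Lemma~\ref{lem:disp-entropy-H}, one gets that $t\mapsto\cF(w_t)$ is $C^1$ on $(0,\infty)$ with
\begin{align*}
\frac{\d}{\d t}\cF(w_t)=\int_{\R^d}\Real\!\big(\overline{\widehat{w_t}}(\xi)\,\partial_t\widehat{w_t}(\xi)\big)\psi^{-1}(\xi)\,\d\xi=-\int_{\R^d}\Real\!\big(\overline{\widehat{w_t}}(\xi)\,\widehat{L_G(w_t)}(\xi)\big)\widetilde{\psi}(\xi)\,\d\xi=-\cE_{\widetilde{\nu}}(w_t,L_G(w_t))\le 0,
\end{align*}
the last identity being the first paragraph and the inequality its positivity. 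In particular $t\mapsto\cF(w_t)$ is nonincreasing on $(0,\infty)$.

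The delicate point — and, I expect, the main obstacle, where the input acknowledged from J.~L.~V\'azquez enters — is the continuity of $t\mapsto\cF(w_t)$ at $t=0$, i.e. $\cF(w_t)\to\cF(u^k_\tau)$. Lower semicontinuity of $\cF$ under narrow convergence gives $\cF(u^k_\tau)\le\liminf_{t\to 0^+}\cF(w_t)$ for free, but the matching upper bound requires the fine structure of the nonlinear flow rather than the $L^2$-domination available in the heat case. One uses conservation of mass and barycenter ($\tfrac{\d}{\d t}\int x\,w_t\,\d x=\int x\,\Delta L_G(w_t)\,\d x=0$), the $L^1$- and $L^p$-continuity and $L^p$-nonexpansivity of the $\cG$-flow, and continuity of $t\mapsto w_t$ in $\cP_2(\R^d)$. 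Splitting $\int|\widehat{w_t}|^2\psi^{-1}$ over $\{|\xi|>1\}$, where $\psi^{-1}$ is bounded (Theorem~\ref{thm:symbol-bound}) and $w_t\to u^k_\tau$ in $L^2$ (from $L^1$-continuity, which produces an a.e.-convergent subsequence, together with $\|w_t\|_{L^2}\le\|u^k_\tau\|_{L^2}$ and lower semicontinuity), and over $\{|\xi|\le 1\}$, where $\widehat{w_t}$ and $\widehat{u^k_\tau}$ agree to first order at $\xi=0$ so that $|\widehat{w_t}(\xi)|^2\le 2|\widehat{u^k_\tau}(\xi)|^2+C|\xi|^4$ with $C$ controlled by the (locally bounded) second moment, which against $\psi^{-1}(\xi)\lesssim|\xi|^{-2}$ is dominated and passes to the limit, one concludes. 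Thus $t\mapsto\cF(w_t)\in C([0,\infty))$.

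With continuity at $0$ in hand, the fundamental theorem of calculus gives, for $t>0$,
\begin{align*}
\frac{\cF(u^k_\tau)-\cF(S_tu^k_\tau)}{t}=\frac1t\int_0^t\cE_{\widetilde{\nu}}(w_r,L_G(w_r))\,\d r\ge\inf_{0<r<t}\cE_{\widetilde{\nu}}(w_r,L_G(w_r)),
\end{align*}
and letting $t\to 0^+$, together with the lower semicontinuity of $v\mapsto\cE_{\widetilde{\nu}}(v,L_G(v))$ (Fatou along an a.e.-convergent subsequence $w_{r_n}\to u^k_\tau$, which also gives finiteness of the relevant energies), yields
\begin{align*}
\fD_{\cG}[\cF](u^k_\tau)=\limsup_{t\to 0^+}\frac{\cF(u^k_\tau)-\cF(S_tu^k_\tau)}{t}\ge\cE_{\widetilde{\nu}}(u^k_\tau,L_G(u^k_\tau))=\big(u^k_\tau,L_G(u^k_\tau)\big)_{\widetilde{\psi}}\ge 0.
\end{align*}
In particular $\fD_{\cG}[\cF](u^k_\tau)>-\infty$, so Theorem~\ref{thm:dissp-ineq-V} applies and gives $u^k_\tau\in D(\cG)$ and $\fD_{\cG}[\cF](u^k_\tau)\le\tau^{-1}\big(\cG(u^{k-1}_\tau)-\cG(u^k_\tau)\big)$ for $k\ge 1$, completing the chain; for $k=0$ the assertion $u^0_\tau\in D(\cG)$ is immediate from Remark~\ref{rem:regularizing effect}, since $u^0_\tau=G_{\omega(\tau)}*u_0$ is bounded.
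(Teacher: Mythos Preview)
Your argument and the paper's diverge at the outset: the paper does \emph{not} work directly with the $\cG$-flow $S_t$. It uses the entropic regularization of Matthes--McCann--Savar\'e, setting $\cV=\cG+\eps\cH$ and running the flow of $\cV$, so that $w_t$ solves $\partial_t w=\Delta\Phi(w)$ with $\Phi'(r)=rG''(r)+\eps\ge\eps>0$. This non-degeneracy is precisely what buys boundedness (and strict positivity) of $w_t$ for $t>0$, which in turn makes the local Lipschitz bound $(L_G\circ u,L_G\circ u)_{\widetilde{\psi}}\le C_L(L_G\circ u,u)_{\widetilde{\psi}}\le C_L^2(u,u)_{\widetilde{\psi}}$ available. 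The $\eps$-term also forces $\tfrac{\cF(u^k_\tau)-\cF(w_t)}{t}\ge\tfrac{\eps}{t}\int_0^t\|w_r\|_{\Hnuwd}^2\,\d r$, which is how one actually sees $w_r\in\Hnuwd$. Only at the very end does the paper let $\eps\to 0$.

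Your direct route has a genuine gap at the point you gloss over with ``the regularizing effect of the $\cG$-flow \ldots\ renders $w_t$ a bounded nonnegative density in $D(\cG)\cap\Hnuwd$ with $L_G(w_t)\in\Hnuwd$'' and ``differentiate under the integral exactly as in Lemma~\ref{lem:disp-entropy-H}''. For a general displacement-convex $G$ the flow $\partial_t w=\Delta L_G(w)$ is degenerate, and neither $L^\infty$-boundedness of $w_t$ nor membership in $\Hnuwd$ follows from the soft references you give; Lemma~\ref{lem:disp-entropy-H} relies on the explicit heat-kernel formula $\widehat{w_t}=e^{-t|\xi|^2}\widehat{u^k_\tau}$ for its domination, which has no analogue here. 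You also misplace the difficulty: continuity of $t\mapsto\cF(w_t)$ at $0$ is \emph{not} the hard step (both you and the paper get it from monotonicity plus Fatou on the Fourier side); the hard step is exactly the boundedness/regularity of $w_t$ that the $\eps$-perturbation is introduced to secure. Your low-frequency argument via barycenter conservation and second-moment Taylor expansion is clever but unnecessary once monotonicity is in hand, and it does not repair the missing regularity.
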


\begin{proof}
Note that $u^{0}_\tau= G_{\omega(\tau)}*u$ clearly belongs to $D(\cG)$. For fixed $\eps>0$, consider the perturbed displacement convex entropy 
\begin{align*}
\cV(u)= \cG(u)+ \eps\cH(u). 
\end{align*}
Let us denote by $S_t$ the flow associated to $\cV$. For fixed $k\geq1, \tau>0$, and $\eps>0$ we set $w_t = S_tu^k_\tau$ which is the unique solution to the generalized porous medium equation 
\begin{align*}
    \partial_t w_t = \Delta \Phi (w_t)= \div(\Phi'(w_t)\nabla w_t),
    \quad \text{and}\quad w_0= u^k_\tau, 
\end{align*}
where $\Phi(v)=L_G(v)+\eps v$, and   $L_G(v)= vG'(v)-G(v)$ as before. Since $G$ is convex we have $\Phi'(r)= rG''(r)+\eps\geq \eps$, for $r>0$, that is $\Phi$ is monotone increasing, and the above equation non-degenerate. Note that since $u^k_\tau\in L^1(\R^d)$ and $u^k_\tau\geq0$, each $w_t$ is strictly positive, see \cite[Chapter 3]{Vaz07} and bounded since $\Phi'(r)
\geq \eps>0$, see for instance \cite{Ben78,Ver70,BeBe85,Vaz06,Vaz05}.  The facts that  $w_t\geq 0$, $\Phi'(w_t)\geq0$,  and $\partial_t w_t= \Delta \Phi (w_t)$ imply that
\begin{align*}
\frac{\d}{\d t}\int_{\R^d}|w_t(x)|^2\d x
= 2\int_{\R^d}w_t(x)\Delta \Phi( w_t(x))\d x= -2\int_{\R^d}|\nabla w_t(x)|^2 \Phi'( w_t(x))\d x\leq 0. 
\end{align*}
Since $w_0=u^k_\tau\in L^2(\R^d)\cap \Hnuwd$, by Lemma \ref{lem:disp-entropy-H}, we  find that  $w_t\in L^2(\R^d)$  and 
\begin{align*}
\|w_t\|_{L^2(\R^d)}\leq \|u^k_\tau\|_{L^2(\R^d)}. 
\end{align*}
Note that the narrow convergence of ${w}_t$ to ${u}^{k}_{\tau}$ implies that $\widehat{w}_t(\xi)\to \widehat{u}^{k}_{\tau}(\xi)$ as $t\to 0$ for all $\xi$. Furthermore, Fatou's Lemma yields $\|w_t\|_{L^2(\R^d)}\to \|u^k_\tau\|_{L^2(\R^d)}$ as $t\to 0$. These two results together with the pointwise convergence $\widehat{w}_t\to \widehat{u}^{k}_{\tau}$, as $t\to 0$, yield
\begin{align*}
\lim_{t\to 0}\|w_t-u^k_\tau\|_{L^2(\R^d)}=0. 
\end{align*}
Therefore, a subsequence (not relabeled) satisfies that $w_t\to u^{k}_\tau$, a.e. as $t\to 0$. According to Theorem \ref{thm:energy-fourier}, for $u,v\in \Hnuw$ we have 
\begin{align*}
\big( u , v\big)_{\widetilde{\psi}}: = \int_{\R^d} \widehat{u}(\xi) \overline{\widehat{v}}(\xi)\widetilde{\psi}(\xi)\, \d\xi=  \iil_{\mathbb{R}^d \mathbb{R}^d}(u(x)-u(y))(v(x)-v(y))\widetilde{\nu}(x-y)\,\d y\,\d x.\end{align*}
Since  $L_G$ increases, $(u(x)-u(y))(L_G\circ u(x)-L_G\circ u(y))\geq0$, so that Fatou's Lemma  yields 
\begin{align*}
\liminf_{t\to 0}\big( L_G\circ w_t , w_t\big)_{\widetilde{\psi}} + \eps\big(w_t , w_t\big)_{\widetilde{\psi}}\geq\big( L_G\circ u^k_\tau,u^k_\tau\big)_{\widetilde{\psi}} + \eps\big(u^k_\tau,u^k_\tau\big)_{\widetilde{\psi}}. 
\end{align*} 
\noindent Recall that $L_G$ is locally Lipschitz and increasing. For every function $u\in  L^\infty(\R^d)$, 
\begin{align*}
0\leq (L_G\circ u(x)-L_G\circ u(y))^2\leq C_L(u(x)-u(y))(L_G\circ u(x)-L_G\circ u(y))\leq C_L^2 (u(x)-u(y))^2, 
\end{align*}
where $C_L>0$ is a Lipschitz constant of $L_G$ depending on $u$. This immediately implies 
\begin{align}\label{eq:local-lipscitz}
0\leq\big( L_G\circ u ,  L_G\circ u\big)_{\widetilde{\psi}} 
\leq C_L\big( L_G\circ u , u\big)_{\widetilde{\psi}}
\leq C_L^2\big(u , u\big)_{\widetilde{\psi}}. 
\end{align}
Next, we exploit these estimates with $u = w_t\in  L^\infty(\R^d)$. Differentiating $t\mapsto \cF(w_t)=\frac{1}{2}\|w_t\|^2_{\Hnuid}$ gives
\begin{align*}
\frac{\d}{\d t}\cF(w_t)&
=
\int_{\R^d}\operatorname{\mathcal{R}e}(\overline{\widehat{w}_t}(\xi)\, \partial_t\widehat{w}_t(\xi)) \psi^{-1}(\xi)\d \xi\\
&=-\int_{\R^d}
\big(\widehat{L_G\circ w_t}(\xi)\overline{\widehat{w}_t}(\xi)+\eps |\widehat{w}_t(\xi)|^2\big)|\xi|^2\psi^{-1}(\xi) \d \xi
\\
&=-\big( L_G\circ w_t , w_t\big)_{\widetilde{\psi}} -\eps\big(w_t , w_t\big)_{\widetilde{\psi}}\leq 0. 
\end{align*}
Accordingly, we have $\cF(w_t)\leq \cF(u^{k}_{\tau})$ and hence  $w_t\in\Hnuid$. Let us recall that the narrow convergence implies $\widehat{w}_t(\xi)\to \widehat{u}^{k}_{\tau}(\xi)$, as $t\to 0$ for all $\xi$, and that by Fatou's Lemma we deduce $\|w_t\|_{\Hnuid}\to \|u^k_\tau\|_{\Hnuid}$ as $t\to 0$, that is  $t\mapsto \cF(w_t)$
is continuous at $t=0$. Then, the fundamental theorem of calculus yields 
\begin{align*}
\frac{\cF(u^k_\tau)-\cF(w_t) }{t}
&=\frac{1}{t}\int_0^t\big( L_G\circ w_r , w_r\big)_{\widetilde{\psi}} +\eps\big(w_r , w_r\big)_{\widetilde{\psi}} \d r\geq \frac{\eps}{t}\int_0^t\big(w_r , w_r\big)_{\widetilde{\psi}} \d r.  
\end{align*}
Without loss of generality, we can assume that $w_t\in \Hnuwd$ and hence by the estimates given in Eq. \eqref{eq:local-lipscitz} that $L_G\circ w_t\in \Hnuwd$. By definition we have 
\begin{align*}
\mathscr{D}_{\cV}[\cF](u_{\tau}^{k})=\limsup_{t\to0} \frac{\cF(u_{\tau}^{k})-\cF(S_tu_{\tau}^{k})}{t}
\geq\big( L_G\circ u^k_\tau,u^k_\tau\big)_{\widetilde{\psi}} + \eps\big(u^k_\tau,u^k_\tau\big)_{\widetilde{\psi}}.  
\end{align*}
In particular, $\mathscr{D}_{\cG}[\cF](u_{\tau}^{k})\geq0$. Furthermore, if $S'_t$ is the flow associated to $\cG$ then  
\begin{align*}
\frac{\cF(u^k_\tau)-\cF(S'_t u^k_\tau) }{t}
&=\frac{1}{t}\int_0^t\big( L_G\circ S'_r u^k_\tau , S'_r u^k_\tau\big)_{\widetilde{\psi}} \d r\geq 0. 
\end{align*}
By Theorem \ref{thm:dissp-ineq-V}, we have that $u_{\tau}^{k}\in D(\cV)= D(\cH)\cap D(\cG)$ for any $k\geq1$ and 
\begin{align*}
    0 \leq \big(L_G\circ u^k_\tau,u^k_\tau\big)_{\widetilde{\psi}}
    &\leq  \mathscr{D}_{\cV}[\cF](u_{\tau}^{k}) \leq 
    \frac{\cV(u_{\tau}^{k-1})- \cV(u_{\tau}^{k})}{\tau}\\
    &=\frac{\cG(u_{\tau}^{k-1})-\cG(u_{\tau}^{k})}{\tau}
    +\eps\frac{\cH(u_{\tau}^{k-1})- \cH(u_{\tau}^{k})}{\tau}, 
\end{align*}
and also 
\begin{align*}
0\leq   \mathscr{D}_{\cG}[\cF](u_{\tau}^{k}) 
&\leq \frac{\cG(u_{\tau}^{k-1})-\cG(u_{\tau}^{k})}{\tau}.
\end{align*}
Thus, we get $0\leq\big( L_G\circ u^k_\tau,u^k_\tau\big)_{\widetilde{\psi}} <\infty$ and, upon letting $\eps\to0$, we obtain
\begin{align*}
0\leq\big( L_G\circ u^k_\tau,u^k_\tau\big)_{\widetilde{\psi}}
\leq  \mathscr{D}_{\cV}[\cF](u_{\tau}^{k}) \leq \frac{\cG(u_{\tau}^{k-1})- \cG(u_{\tau}^{k})}{\tau}.
\end{align*}
\end{proof} 

\noindent  In particular, for the specific class of functionals $G(u)=\frac{u^p}{p-1}$ we have $L_G(u)= u^p$, which we will use to get $L^p$-control.
\begin{corollary}\label{cor:lp-discrete-monotone}
Assume $u_0\in L^p(\R^d)$ and let $(u^k_\tau)_k$ be the sequence of  Definition \ref{def:discrete}. Assume that $\widetilde{\psi}(\xi)=|\xi|^2\psi^{-1}(\xi)$ is the symbol associated with a L\'{e}vy kernel $\widetilde{\nu}$. Then $u^k_\tau\in L^p(\R^d)$ for any $k\geq0$ and for all $k\geq1$ we have 
\begin{align*}
\|u^{k}_\tau\|_{L^p(\R^d)}\leq \|u^{k-1}_\tau\|_{L^p(\R^d)}. 
\end{align*}
\end{corollary}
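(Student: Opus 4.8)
The plan is to specialize Theorem \ref{thm:general-flow-interchange} to the entropy density $G(x) = \frac{x^p}{p-1}$, $x \ge 0$, with $G(0) = 0$, for fixed $1 < p < \infty$. First I would verify that $G$ is a displacement convex entropy in the sense required there. Obviously $G \in C^1((0,\infty)) \cap C([0,\infty))$ and $G(0) = 0$; since $G \ge 0$ the condition $\lim_{x \to 0^+} G(x)/x^\alpha > -\infty$ holds for every $\alpha > 0$, and one may pick $\alpha \in (\tfrac{d}{d+2}, 1)$, for which in fact $G(x)/x^\alpha \to 0$ because $p > 1 > \alpha$. For McCann's condition a direct computation gives
\begin{align*}
r^d G(r^{-d}) = \frac{1}{p-1}\, r^{-d(p-1)}, \qquad r > 0,
\end{align*}
which, since $p > 1$, is a positive constant times a strictly negative power of $r$ and hence convex and decreasing on $(0,\infty)$.

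Next I would compute the associated nonlinearity $L_G(u) = u G'(u) - G(u) = \frac{p}{p-1} u^p - \frac{1}{p-1} u^p = u^p$, so that the pairing appearing in Theorem \ref{thm:general-flow-interchange} reads $\big(u^k_\tau, L_G(u^k_\tau)\big)_{\widetilde{\psi}} = \big(u^k_\tau, (u^k_\tau)^p\big)_{\widetilde{\psi}}$. Observe also that $\cG(u) = \frac{1}{p-1}\|u\|_{L^p(\R^d)}^p$ whenever $u$ admits a Lebesgue density and $\cG(u) = \infty$ otherwise, and that $u^0_\tau = G_{\omega(\tau)} * u_0$ is bounded and integrable, hence lies in $L^p(\R^d)$, with $\|u^0_\tau\|_{L^p(\R^d)} \le \|u_0\|_{L^p(\R^d)}$ by Young's inequality since $\|G_{\omega(\tau)}\|_{L^1(\R^d)} = 1$. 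Thus Theorem \ref{thm:general-flow-interchange} applies with this $\cG$ and yields, for every $k \ge 1$,
\begin{align*}
0 \le \big(u^k_\tau, (u^k_\tau)^p\big)_{\widetilde{\psi}} \le \frac{\cG(u^{k-1}_\tau) - \cG(u^k_\tau)}{\tau}.
\end{align*}
This forces $u^k_\tau \in D(\cG)$, i.e. $u^k_\tau \in L^p(\R^d)$, for every $k \ge 0$, and $\cG(u^k_\tau) \le \cG(u^{k-1}_\tau)$; multiplying by $p - 1 > 0$ and taking $p$-th roots gives the claimed $\|u^k_\tau\|_{L^p(\R^d)} \le \|u^{k-1}_\tau\|_{L^p(\R^d)}$.

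Since the whole argument reduces to invoking Theorem \ref{thm:general-flow-interchange}, there is no genuine obstacle; the only points requiring a little care are the verification of the three defining properties of a displacement convex entropy for $G(x) = x^p/(p-1)$ uniformly in $1 < p < \infty$ (the exponent $r^{-d(p-1)}$ flattens as $p \to 1^+$ but stays convex and decreasing for every $p > 1$), and keeping track of the standing hypothesis that $\widetilde{\psi}$ is the Fourier symbol of a (radial) L\'evy kernel $\widetilde{\nu}$ — carried over from Theorem \ref{thm:general-flow-interchange}, where it also guarantees that the form $(\cdot,\cdot)_{\widetilde{\psi}}$ and the local Lipschitz bound \eqref{eq:local-lipscitz} make sense — so that the corollary remains within the scope of that theorem.
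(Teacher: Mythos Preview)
Your proposal is correct and follows exactly the approach intended by the paper: the corollary is stated immediately after the remark that the choice $G(u)=\tfrac{u^p}{p-1}$ gives $L_G(u)=u^p$, and your argument simply spells out the verification that this $G$ is a displacement convex entropy and then reads off the $L^p$-monotonicity from the inequality $0\le \cG(u^{k-1}_\tau)-\cG(u^k_\tau)$ in Theorem~\ref{thm:general-flow-interchange}. Your parenthetical observation that the radiality of $\widetilde{\nu}$ is implicitly inherited from Theorem~\ref{thm:general-flow-interchange} is also well taken.
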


As a consequence, we are able to prove Theorem \ref{thm:main-theorem}, item $(vi)$.
\begin{theorem}\label{thm:entropy-boundedness}
Assume $u_0\in D(\cH)$ and let $u\in AC^2([0,\infty), (\cP_2, W))$ be the limit curve obtained in 
Theorem \ref{thm:mini-mvt-scheme}.   Then, for all $t\geq0$, we have 
\begin{align*}
\cH(u(t)) \leq \cH(u_0). 
\end{align*}
\end{theorem}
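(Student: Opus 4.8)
The plan is to pass to the limit $\tau\to 0$ in the discrete entropy monotonicity recorded in Corollary~\ref{cor:disp-entropy-H}, using that the Boltzmann--Shannon entropy $\cH$ is lower semicontinuous along narrowly convergent sequences whose second moments stay bounded.

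At the discrete level, Corollary~\ref{cor:disp-entropy-H} gives $u_\tau^k\in D(\cH)$ for every $k\ge 0$ together with $\cH(u_\tau^k)\le\cH(u_\tau^{k-1})$ for every $k\ge 1$; chaining these inequalities yields $\cH(u_\tau^k)\le\cH(u_\tau^0)$ for all $k\ge 0$. It then suffices to bound the initialization $u_\tau^0=G_{\omega(\tau)}*u_0$ by $\cH(u_0)$. For this I would invoke Jensen's inequality: writing $\Phi(r)=r\log r$ and viewing $u_\tau^0(x)=\int_{\R^d}G_{\omega(\tau)}(x-y)u_0(y)\,\d y$ as an average of $u_0$ against the probability kernel $y\mapsto G_{\omega(\tau)}(x-y)$, convexity of $\Phi$ gives the pointwise bound $\Phi(u_\tau^0(x))\le (G_{\omega(\tau)}*\Phi(u_0))(x)$; integrating in $x$ and using that convolution with a probability kernel preserves the integral gives $\int_{\R^d}\Phi(u_\tau^0)\le\int_{\R^d}\Phi(u_0)$, i.e. $\cH(u_\tau^0)\le\cH(u_0)$. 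The right-hand side is finite precisely because $u_0\in D(\cH)$, which is where that hypothesis enters. Hence $\cH(u_\tau^k)\le\cH(u_0)$ for every $k\ge 0$ and every $\tau>0$.

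Next, fix $t>0$. By definition $u_\tau(t)=u_\tau^{\lceil t/\tau\rceil}$, so the previous step gives $\cH(u_\tau(t))\le\cH(u_0)$ for all $\tau>0$. Along the subsequence $(\tau_n)_n$ of Theorem~\ref{thm:mini-mvt-scheme} we have $u_{\tau_n}(t)\to u(t)$ narrowly, and by Proposition~\ref{prop:discrete}$(ii)$ (applied with $N=\lceil t/\tau_n\rceil$, so that $\tau_n N\le t+\tau_n$) the second moments of $u_{\tau_n}(t)$ are bounded uniformly in $n$. Invoking the lower semicontinuity of $\cH$ with respect to narrow convergence on this class of measures, we obtain
\[
\cH(u(t))\le\liminf_{n\to\infty}\cH(u_{\tau_n}(t))\le\cH(u_0).
\]
For $t=0$ the inequality is trivial since $u(0^+)=u_0$.

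The argument involves no hard estimates; the only point requiring care is that the lower semicontinuity of the Boltzmann--Shannon entropy must be applied on a class of measures with uniformly bounded second moment, so that the negative part $\int u(\log u)_-$ cannot escape to infinity along the sequence -- this is exactly the class furnished by Proposition~\ref{prop:discrete}, and the narrow lower semicontinuity of displacement convex entropies was already recorded above. The Jensen step $\cH(G_{\omega(\tau)}*u_0)\le\cH(u_0)$ and the chaining of the discrete inequalities are routine.
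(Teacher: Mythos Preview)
Your proof is correct and follows essentially the same approach as the paper: chain the discrete entropy inequalities from Corollary~\ref{cor:disp-entropy-H}, bound $\cH(u_\tau^0)\le\cH(u_0)$ via Jensen, and pass to the limit using narrow lower semicontinuity of $\cH$. The only cosmetic difference is that the paper justifies the lower semicontinuity step by invoking convexity of $\cH$ together with the Banach--Saks theorem, whereas you (arguably more carefully) invoke the uniform second-moment bound from Proposition~\ref{prop:discrete} to control the negative part of the entropy.
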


\begin{proof}
Recalling that $u^{0}_\tau=G_{\omega(\tau)}*u_{0}$ and $u_\tau(t)= u^{\lceil t/\tau\rceil}_{\tau}$, by Corollary \ref{cor:disp-entropy-H} we have 
\begin{align*}
\cH(u_\tau(t))\leq \cH(u^{0}_\tau)\leq \cH(u_0). 
\end{align*}
Since $\cH$ is convex, the Banach-Saks theorem see \cite{Fog23BS} implies, the lower semicontinuity of $\cH$ with respect to the narrow convergence so that 
\begin{align*}
\cH(u(t))\leq \liminf_{\tau\to 0}\cH(u^{0}_\tau)\leq \cH(u_0). 
\end{align*}
\end{proof}

\begin{theorem}\label{thm:lp-boundedness}
Assume that $\widetilde{\psi}(\xi)=|\xi|^2\psi^{-1}(\xi)$ is the symbol associated with a L\'{e}vy kernel $\widetilde{\nu}$. Assume $u_0\in L^p(\R^d)$ and let $u\in AC^2([0,\infty), (\cP_2, W))$ be the limit curve obtained in 
Theorem \ref{thm:mini-mvt-scheme}.   Then, for all $t\geq0$, we have 
\begin{align*}
\|u(t)\|_{L^p(\R^d)}\leq \|u_0\|_{L^p(\R^d)}. 
\end{align*}
\end{theorem}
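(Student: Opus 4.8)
The plan is to mirror, almost verbatim, the proof of Theorem~\ref{thm:entropy-boundedness}, replacing the entropy $\cH$ by the functional $v\mapsto\|v\|_{L^p(\R^d)}$ (understood as $+\infty$ when $v$ is not absolutely continuous with $L^p$ density). First I would obtain a bound on the discrete scheme that is uniform in $\tau$. Since $u_{\tau}^{0}=G_{\omega(\tau)}*u_0$ and $G_{\omega(\tau)}$ is a probability density, Young's convolution inequality gives $\|u_{\tau}^{0}\|_{L^p(\R^d)}\le\|G_{\omega(\tau)}\|_{L^1(\R^d)}\,\|u_0\|_{L^p(\R^d)}=\|u_0\|_{L^p(\R^d)}$; in particular $u_{\tau}^{0}\in L^p(\R^d)$, which is the hypothesis needed to initialise the recursion in Corollary~\ref{cor:lp-discrete-monotone}. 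Applying that corollary repeatedly yields $\|u_{\tau}^{k}\|_{L^p(\R^d)}\le\|u_{\tau}^{k-1}\|_{L^p(\R^d)}\le\dots\le\|u_{\tau}^{0}\|_{L^p(\R^d)}\le\|u_0\|_{L^p(\R^d)}$ for every $k\ge0$, hence, recalling $u_{\tau}(t)=u_{\tau}^{\lceil t/\tau\rceil}$, the uniform estimate $\|u_{\tau}(t)\|_{L^p(\R^d)}\le\|u_0\|_{L^p(\R^d)}$ for all $t\ge0$ and all $\tau>0$. Here the hypothesis that $\widetilde{\psi}$ is the symbol of a L\'evy kernel $\widetilde{\nu}$ enters only insofar as it is precisely what Corollary~\ref{cor:lp-discrete-monotone}, and behind it Theorem~\ref{thm:general-flow-interchange}, requires.

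Second, I would pass to the limit along the subsequence $(\tau_n)_n$ furnished by Theorem~\ref{thm:mini-mvt-scheme}, along which $u_{\tau_n}(t)\to u(t)$ narrowly for every $t$. It then remains to invoke the lower semicontinuity of $v\mapsto\|v\|_{L^p(\R^d)}$ with respect to narrow convergence. For $1<p<\infty$ the function $G(x)=x^p/(p-1)$ is a displacement convex entropy density, with associated functional $\cG(v)=\tfrac{1}{p-1}\|v\|_{L^p(\R^d)}^p$, and such functionals are narrowly lower semicontinuous; equivalently, one argues exactly as for $\cH$ in Theorem~\ref{thm:entropy-boundedness}, using the convexity of $x\mapsto x^p$ together with the Banach--Saks theorem, see \cite{Fog23BS}. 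Consequently $\|u(t)\|_{L^p(\R^d)}\le\liminf_{n\to\infty}\|u_{\tau_n}(t)\|_{L^p(\R^d)}\le\|u_0\|_{L^p(\R^d)}$ for every $t\ge0$, which is the assertion.

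I do not anticipate a genuine obstacle in this proof: the analytic substance has already been absorbed into Corollary~\ref{cor:lp-discrete-monotone} (and Theorem~\ref{thm:general-flow-interchange}), so the statement is essentially a repackaging of the discrete monotonicity together with a standard lower semicontinuity argument. The only points requiring a little care are the two endpoints of the scheme, namely the mollification step $u_0\mapsto u_{\tau}^{0}$, which is handled by Young's inequality rather than by the flow-interchange estimate, and the observation that the statement is vacuous unless $u_0\in L^p(\R^d)$, which is exactly what makes the recursion well posed.
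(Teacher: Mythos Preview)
Your proposal is correct and follows essentially the same approach as the paper: apply Corollary~\ref{cor:lp-discrete-monotone} together with Young's inequality for the mollified initial datum to get the uniform discrete bound $\|u_\tau(t)\|_{L^p(\R^d)}\le\|u_0\|_{L^p(\R^d)}$, then pass to the limit using narrow lower semicontinuity of $\|\cdot\|_{L^p(\R^d)}$ via convexity and the Banach--Saks theorem. The paper's proof is slightly terser but identical in substance.
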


\begin{proof}
Recalling that $u^{0}_\tau=G_{\omega(\tau)}*u_{0}$ and $u_\tau(t)= u^{\lceil t/\tau\rceil}_{\tau}$, by Corollary \ref{cor:lp-discrete-monotone} we have 
\begin{align*}
\|u_\tau(t)\|_{L^p(\R^d)}\leq \|u^{0}_\tau\|_{L^p(\R^d)}\leq \|u_0\|_{L^p(\R^d)}. 
\end{align*}
Since $\|\cdot\|^p_{L^p(\R^d)}$ is convex, the Banach-Saks theorem see \cite{Fog23BS} implies the lower semicontinuity of $\|\cdot\|_{L^p(\R^d)}$ with respect to the narrow convergence so that
\begin{align*}
\|u(t)\|_{L^p(\R^d)}\leq \liminf_{\tau\to 0}\|u_\tau(t)\|_{L^p(\R^d)}\leq \|u_0\|_{L^p(\R^d)}.
\end{align*}
\end{proof}

\medskip 

\begin{corollary}\label{cor:bound-utau}
Let $u_0\in D(\cF)$ and $(u_{\tau}^{k})$ defined as in Eq. \eqref{eq:discrete} and consider the piecewise constant approximation $u_\tau(t)= u_{\tau}^{\lceil t/\tau\rceil}$. 
For every $t>0$, $u_{\tau}(t)\in \Hnuwd$ . Moreover, for $T> T_0\geq\tau> 0$, we have 
\begin{align*}
\int_{T_0}^T\|u_{\tau}(t)\|^2_{\Hnuwd}\d t\leq \cH(u_{\tau}^{N_0(\tau)})
+ \tilde \kappa \left(1 + \cF(u_0)+ \int_{\R^d} |x|^2\d u_0(x)\right).
\end{align*}
where $N_0(\tau)=\lfloor T_0/\tau\rfloor$ and $\tilde \kappa>0$ is a constant only depending on $T$ and $d$.  
\end{corollary}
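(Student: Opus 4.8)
The plan is to sum the discrete dissipation estimate of Corollary~\ref{cor:disp-entropy-H} over the time slices contained in $[T_0,T]$, telescope, and then control the single leftover boundary entropy term by a second-moment bound.

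The inclusion $u_\tau(t)\in\Hnuwd$ for $t>0$ is immediate from Corollary~\ref{cor:disp-entropy-H}, since then $\lceil t/\tau\rceil\geq1$ and $u_\tau(t)=u_\tau^{\lceil t/\tau\rceil}\in\Hnuw\subseteq\Hnuwd$. For the integral bound I set $N_0:=N_0(\tau)=\lfloor T_0/\tau\rfloor$ and $N_1:=\lceil T/\tau\rceil$; since $0<\tau\leq T_0<T$ one has $1\leq N_0<N_1$, $\tau N_1<T+\tau<2T$, and $[T_0,T]\subseteq[\tau N_0,\tau N_1]$. On each interval $(\tau(k-1),\tau k]$ the curve $u_\tau$ is constant and equal to $u_\tau^{k}$, whence
\begin{align*}
\int_{T_0}^T\|u_\tau(t)\|^2_{\Hnuwd}\,\d t
&\leq \int_{\tau N_0}^{\tau N_1}\|u_\tau(t)\|^2_{\Hnuwd}\,\d t
= \sum_{k=N_0+1}^{N_1}\tau\,\|u_\tau^{k}\|^2_{\Hnuwd}\\
&\leq \sum_{k=N_0+1}^{N_1}\big(\cH(u_\tau^{k-1})-\cH(u_\tau^{k})\big)
= \cH(u_\tau^{N_0})-\cH(u_\tau^{N_1}),
\end{align*}
using Corollary~\ref{cor:disp-entropy-H} for the second inequality and telescoping a finite sum of finite real numbers (each $\cH(u_\tau^k)$ is finite, since $u_\tau^k\in D(\cH)$ has finite second moment).

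It then remains to bound $-\cH(u_\tau^{N_1})$ from above. Here I would invoke the classical lower bound on the Boltzmann entropy in terms of the second moment: for any $\rho\in\cP_2(\R^d)$, comparison with the Gaussian $\gamma(x)=(2\pi)^{-d/2}e^{-|x|^2/2}$ together with the non-negativity of the relative entropy $\int_{\R^d}\rho\log(\rho/\gamma)\,\d x\geq0$ gives $\cH(\rho)\geq-\frac{d}{2}\log(2\pi)-1-\frac12\int_{\R^d}|x|^2\,\d\rho(x)$. Applying this with $\rho=u_\tau^{N_1}$, estimating $\int_{\R^d}|x|^2 u_\tau^{N_1}(x)\,\d x$ by Proposition~\ref{prop:discrete}$(ii)$, and using $\tau N_1<2T$, yields $-\cH(u_\tau^{N_1})\leq\tilde\kappa\big(1+\cF(u_0)+\int_{\R^d}|x|^2\,\d u_0(x)\big)$ for some $\tilde\kappa>0$ depending only on $T$ and $d$; combined with the telescoped estimate this is exactly the assertion. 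The argument is essentially routine --- the one point requiring genuine care, and the reason the slices must be chosen with the correct endpoints, is the control of the boundary term $-\cH(u_\tau^{N_1})$, which is precisely where the Gaussian-comparison lower bound on the entropy enters.
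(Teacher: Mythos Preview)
Your proof is correct and follows essentially the same strategy as the paper: telescope the dissipation inequality of Corollary~\ref{cor:disp-entropy-H} over the relevant time slices to obtain $\cH(u_\tau^{N_0})-\cH(u_\tau^{N_1})$, then bound $-\cH(u_\tau^{N_1})$ via a second-moment estimate and finish with Proposition~\ref{prop:discrete}$(ii)$. The only variant is in how you obtain the entropy lower bound: the paper proves the Carleman-type inequality \eqref{eq:carleman-inequality} directly, splitting the integral according to the sets $\{u_\tau^{N+1}\gtrless e^{-|x|^2}\}$ and $\{|x|\lessgtr1\}$, while you invoke the non-negativity of the relative entropy against the standard Gaussian. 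Your route is slightly cleaner and gives the same constant dependence on $T,d$; both are standard.
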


\begin{proof}
Set $N=\lfloor T/\tau\rfloor$  and $N_0= N_0(\tau)=\lfloor T_0/\tau\rfloor$
so that $(T_0,T)\subset (\tau N_0, \tau( N+1))$. Thus, 
\begin{align*}
\int_{T_0}^T\|u_{\tau}(t)\|^2_{\Hnuwd}\d t&\leq \int_{\tau N_0}^{\tau(N+1)}\|u_{\tau}(t)\|^2_{\Hnuwd}\d t\\
&=  \sum_{k=N_0}^N \int_{\tau k}^{\tau(k+1)} \|u_{\tau}(t)\|^2_{\Hnuwd}\d t
= \sum_{k=N_0}^{N} \tau \|u_{\tau}^{k+1}\|^2_{\Hnuwd}\\
&\leq \sum_{k=N_0}^N  \big(\cH(u_{\tau}^{k})- \cH(u_{\tau}^{k+1})\big)
= \cH(u_{\tau}^{N_0})- \cH(u_{\tau}^{N+1}),
\end{align*}
having used Corollary \ref{cor:disp-entropy-H}.
Once we prove the inequality
\begin{align}
    \label{eq:carleman-inequality}
    -\cH(u_{\tau}^{N+1})\leq \kappa\Big(1+\int_{\R^d} |x|^2 u_{\tau}^{N+1}(x)\d x \Big),
\end{align}
then, together with the estimate in Eq. \eqref{eq:dist-uk-delta}, we have
\begin{align*}
    -\cH(u_{\tau}^{N+1})
    &\leq \frac{8d \kappa }{\log(2)}\Big(1+\tau (N+1)\cF(u_0)+ \int_{\R^d} |x|^2\d u_0(x) \Big)\\
    &\leq  \frac{8d\kappa}{\log(2)}\Big(1+2T\cF(u_0)+ \int_{\R^d} |x|^2\d u_0(x)\Big).
\end{align*}
Hence, it is enough to prove
Inequality \eqref{eq:carleman-inequality} to complete the proof. To this end, let us proceed as follows
\begin{align*}
    -\int_{\R^d} u_{\tau}^{N+1}  \log u_{\tau}^{N+1}   \d x 
    = &-\int_{\R^d}u_{\tau}^{N+1} \log u_{\tau}^{N+1} \mathds{1}_{\{ u_{\tau}^{N+1} 
    \geq \exp(-|x|^2) \}} \d x \\
    &- \int_{|x|\leq1}u_{\tau}^{N+1} \log u_{\tau}^{N+1} \mathds{1}_{\{ u_{\tau}^{N+1} \leq \exp(-|x|^2) \}} \d x \\
    &-\int_{|x|\geq1}u_{\tau}^{N+1} \log u_{\tau}^{N+1} \mathds{1}_{\{ u_{\tau}^{N+1} \leq \exp(-|x|^2) \}} \d x.
\end{align*}
First of all, we note that if $u_{\tau}^{N+1} \geq \exp(-|x|^2)$ then 
$-\log u_{\tau}^{N+1} \leq |x|^2$, so that 
\begin{align*}
-\int_{\R^d}u_{\tau}^{N+1} \log u_{\tau}^{N+1} \mathds{1}_{\{ u_{\tau}^{N+1} 
\geq \exp(-|x|^2) \}} \d x \leq \int_{\R^d}u_{\tau}^{N+1}|x|^2\d x. 
\end{align*}
For the second term, we have 
\begin{align*}
- \int_{|x|\leq1}u_{\tau}^{N+1} \log u_{\tau}^{N+1} \mathds{1}_{\{ u_{\tau}^{N+1} \leq \exp(-|x|^2) \}} \d x \leq |B_1(0)| \max_{t \in [0,1]} t| \log t | \leq C.
\end{align*}
Last, the monotonicity of $t\mapsto t\log t$ implies
\begin{align*}
-\int_{|x|\geq1}u_{\tau}^{N+1} \log u_{\tau}^{N+1} \mathds{1}_{\{ u_{\tau}^{N+1} \leq \exp(-|x|^2) \}} \d x 
&\leq  \int_{|x|\geq1} u_{\tau}^{N+1} |\log u_{\tau}^{N+1} |\mathds{1}_{\{ u_{\tau}^{N+1} \leq \exp(-|x|^2) \}} \d x \\
&\leq \int_{\mathbb{R}^d} |x|^2\exp(-|x|^2) \d x \leq C.
\end{align*}
Hence, we obtain the desired bound for $u_{\tau}^{N+1}$.
\end{proof}

\section{Convergence of discrete approximations}\label{sec:convergence-discrete}
In this section, we establish the convergence of the piecewise constant interpolations of the approximate solutions $(u_\tau)_\tau$  and the associated discrete pressures $ (v_\tau)_\tau$  
(recalling $v_\tau = L^{-1} u_\tau$, \emph{i.e.}, $v_\tau$ is defined so that $\widehat{v}_\tau(\xi)= \psi^{-1}(\xi)\widehat{u}_\tau(\xi)$) 
in appropriate spaces. In Section \ref{sect:dissipationsolution}, we prove that the limit curve obtained in Theorem \ref{thm:mini-mvt-scheme} satisfies Eq. \eqref{eq:main_eqn} in the sense of Eq. \eqref{eq:def:weak-solutions}.
Let us start with the following observation. 
\begin{theorem}
\label{thm:interpolating} Let $u\in \Hnuid\cap \Hnuwd $ then we have 
\begin{align*}
    \|u\|^2_{L^2(\R^d)} &\leq 2\kappa_\nu \big(\|u\|^2_{\Hnuwd} + \|u\|^2_{\Hnuid}\big), 
\end{align*} 
with $\kappa_\nu = 2\|\nu\|_{L^1(\R^d,1\land|h|^2)}$.  Now if the  condition \eqref{eq:lower-bound-cond} is satisfied, then we have
\begin{align*}
    \|u\|^2_{\Hnusd} &\leq c_\nu^{-1}\Big( \|u\|^2_{\Hnuwd} + \|u\|^2_{\Hnuid} \Big)
\end{align*} 
In particular, the embedding $ \Hnui\cap \Hnuw\hookrightarrow \Hnus$ is continuous.

\end{theorem}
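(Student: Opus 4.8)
The plan is to recast all three inequalities as pointwise comparisons between the Fourier symbols $\psi^{-1}$, $\widetilde\psi(\xi)=|\xi|^2\psi^{-1}(\xi)$ and $\psi^*(\xi)=|\xi|^2\psi^{-2}(\xi)=\psi^{-1}(\xi)\widetilde\psi(\xi)$, and then to integrate against $|\widehat u(\xi)|^2$ using Plancherel. Recall that, by definition, $\|u\|^2_{L^2(\R^d)}=\int_{\R^d}|\widehat u(\xi)|^2\,\d\xi$, $\|u\|^2_{\Hnuid}=\int_{\R^d}|\widehat u(\xi)|^2\psi^{-1}(\xi)\,\d\xi$, $\|u\|^2_{\Hnuwd}=\int_{\R^d}|\widehat u(\xi)|^2\widetilde\psi(\xi)\,\d\xi$ and $\|u\|^2_{\Hnusd}=\int_{\R^d}|\widehat u(\xi)|^2\psi^*(\xi)\,\d\xi$. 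Since $\psi$ vanishes only at the origin, a Lebesgue-null set, it suffices to establish the pointwise bounds for $\xi\in\R^d\setminus\{0\}$, where $\psi(\xi)>0$.

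For the first inequality I would use that $\widetilde\psi(\xi)+\psi^{-1}(\xi)=(1+|\xi|^2)\psi^{-1}(\xi)$, so the claim reduces to the pointwise bound $1\le 2\kappa_\nu(1+|\xi|^2)\psi^{-1}(\xi)$, i.e. $\psi(\xi)\le 2\kappa_\nu(1+|\xi|^2)$; this is immediate from the upper bound $\psi(\xi)\le\kappa_\nu(1+|\xi|^2)$ of Proposition \ref{prop:bound-levy-symbol}. For the second inequality, the key elementary observation is that $(1\land a)(1+a)\ge a$ for all $a\ge0$ (split into the cases $a\le1$ and $a\ge1$); applying this with $a=|\xi|^2$ together with the lower-bound condition \eqref{eq:lower-bound-cond}, $\psi(\xi)\ge c_\nu(1\land|\xi|^2)$, gives $\psi(\xi)(1+|\xi|^2)\ge c_\nu|\xi|^2$, hence $\widetilde\psi(\xi)=|\xi|^2\psi^{-1}(\xi)\le c_\nu^{-1}(1+|\xi|^2)$. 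Multiplying by $\psi^{-1}(\xi)$ yields the pointwise bound
\[
    \psi^*(\xi)=\psi^{-1}(\xi)\widetilde\psi(\xi)\le c_\nu^{-1}(1+|\xi|^2)\psi^{-1}(\xi)=c_\nu^{-1}\big(\psi^{-1}(\xi)+\widetilde\psi(\xi)\big),
\]
and integrating against $|\widehat u(\xi)|^2$ gives $\|u\|^2_{\Hnusd}\le c_\nu^{-1}\big(\|u\|^2_{\Hnuid}+\|u\|^2_{\Hnuwd}\big)$, which is in particular finite whenever $u\in\Hnuid\cap\Hnuwd$.

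Finally, the continuity of the embedding $\Hnui\cap\Hnuw\hookrightarrow\Hnus$ follows by combining the two estimates above with the identity $\|u\|^2_{\Hnus}=\|u\|^2_{L^2(\R^d)}+\|u\|^2_{\Hnusd}$ (and likewise $\Hnui=L^2(\R^d)\cap\Hnuid$, $\Hnuw=L^2(\R^d)\cap\Hnuwd$), so that $\|u\|^2_{\Hnus}\le\|u\|^2_{L^2(\R^d)}+c_\nu^{-1}\big(\|u\|^2_{\Hnuwd}+\|u\|^2_{\Hnuid}\big)$ is controlled by $\|u\|^2_{\Hnui}+\|u\|^2_{\Hnuw}$. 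I do not expect a genuine obstacle here: the statement is a chain of pointwise symbol comparisons followed by Plancherel, and the only point requiring a moment's care is the elementary inequality $(1\land a)(1+a)\ge a$, which is precisely what converts the near-zero/near-infinity dichotomy in \eqref{eq:lower-bound-cond} into the clean bound on $\widetilde\psi$ needed to dominate $\psi^*$; one should also keep in mind that the vanishing of $\psi$ at the origin is harmless since it happens on a null set.
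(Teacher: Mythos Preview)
Your proof is correct and follows essentially the same approach as the paper: both reduce via Plancherel to pointwise symbol comparisons and invoke the upper bound $\psi(\xi)\le\kappa_\nu(1+|\xi|^2)$ from Proposition~\ref{prop:bound-levy-symbol} and the lower bound \eqref{eq:lower-bound-cond}. The only cosmetic difference is that the paper splits explicitly into the regions $|\xi|\le1$ and $|\xi|>1$, whereas you package the same dichotomy into the single elementary inequality $(1\land a)(1+a)\ge a$ and the identity $\widetilde\psi+\psi^{-1}=(1+|\xi|^2)\psi^{-1}$; this is a slightly cleaner presentation of the same argument.
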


\begin{proof}
Note that  $\psi(\xi)\leq 2\kappa_\nu|\xi|^2$ for $|\xi|>1$, and $\psi(\xi)\leq 2\kappa_\nu$ for $|\xi|\leq 1$. Using this observation, we find
\begin{align*}
    \|u\|^2_{L^2(\R^d)} 
    &=\int_{|\xi|>1} |\widehat{u}(\xi)|^2\psi(\xi)\psi^{-1}(\xi) \d \xi + \int_{|\xi|\leq 1} |\widehat{u} (\xi)|^2\psi(\xi)\psi^{-1}(\xi) \d \xi\\
    &\leq 2\kappa_\nu \int_{|\xi|>1} |\widehat{u} (\xi)|^2 \widetilde{\psi}(\xi) \d \xi + 2\kappa_\nu \int_{|\xi|\leq 1} |\widehat{u} (\xi)|^2 \, \psi^{-1}(\xi) \d \xi\\
    &\leq 2\kappa_\nu \left(\|u\|^2_{\Hnuwd}+\|u\|^2_{\Hnuid}\right),
\end{align*}
which proves the first statement. Concerning the second statement, let us use Condition \eqref{eq:lower-bound-cond}, \emph{i.e.}, $\psi^{-1}(\xi) \leq \frac{c_\nu^{-1}}{1\land |\xi|^2}$. Again, decomposing the domain of integration into high and low frequencies, we may estimate both regimes separately and obtain
\begin{align*}
    \|u\|^2_{\Hnusd} 
    &=\int_{|\xi|>1} |\widehat{u}(\xi)|^2\psi^*(\xi) \d \xi + \int_{|\xi|\leq 1} |\widehat{u} (\xi)|^2\psi^*(\xi) \d \xi\\
    &= \int_{|\xi|>1} |\widehat{u} (\xi)|^2\psi^{-1}(\xi)\widetilde{\psi}(\xi) \d \xi + \int_{|\xi|\leq 1} |\widehat{u} (\xi)|^2|\xi|^2\psi^{-1}(\xi)\, \psi^{-1}(\xi) \d \xi\\
    &\leq  c_\nu^{-1} \int_{|\xi|>1} |\widehat{u} (\xi)|^2\frac{1}{1\land |\xi|^2} \widetilde{\psi}(\xi) \d \xi + c_\nu^{-1} \int_{|\xi|\leq 1} |\widehat{u} (\xi)|^2\frac{|\xi|^2}{1\land |\xi|^2} \, \psi^{-1}(\xi) \d \xi\\
    &=c^{-1} \int_{|\xi|>1} |\widehat{u} (\xi)|^2\widetilde{\psi}(\xi) \d \xi 
    + c_\nu^{-1} \int_{|\xi|\leq 1} |\widehat{u} (\xi)|^2 \, \psi^{-1}(\xi) \d \xi \\
    &\leq c_\nu^{-1} \big(\|u\|^2_{\Hnuwd} + \|u\|^2_{\Hnuid} \big). 
\end{align*}
From the two estimates the continuity of the embedding $\Hnui\cap \Hnuw\hookrightarrow \Hnus$ follows.   
\end{proof}

\begin{theorem}
\label{thm:convergence-discrete}
Let $u_0\in \Hnuid\cap \cP_2(\R^d)$, $(u_\tau)_\tau$ be the piecewise constant approximations in Definition \ref{def:discrete} and $u$ its limit curve obtained in Theorem \ref{thm:mini-mvt-scheme}. Define $v_\tau= L^{-1} u_\tau $ and $v= L^{-1} u$. 
Then, there is a subsequence (not relabeled) $(\tau_n)_n$, such that following hold:  

\begin{enumerate}[$(i)$]
\item We have $u\in L^2(0,T; \Hnuw)$ and 
\begin{align*}
    &\text{$u_{\tau_n} \rightharpoonup u$, \qquad  weakly in $L^2(0,T; \Hnuw)$}.
\end{align*}
If, in addition, Condition \eqref{eq:compactness-ass} is met, \emph{i.e.}, there holds
\begin{align*}
    \sup_{\xi\in \R^d}\frac{1}{\widetilde{\psi}(\xi)}
    |e^{i\xi\cdot h}-1|^2= \sup_{\xi\in \R^d}\frac{\psi(\xi)}{|\xi|^2}
    |e^{i\xi\cdot h}-1|^2\xrightarrow{|h|\to0}0, 
\end{align*}
then, for any $0<T_0<T$, we have 
\begin{align*}
    &\text{$u_{\tau_n}\to u$, \qquad strongly in $L^2(T_0,T; L^2_{\loc}(\R^d))$}. 
\end{align*}

\item If Condition \eqref{eq:lower-bound-cond} is met, we have $\nabla v\in L^2(0,T; L^2(\R^d))$, and
\begin{align*}
    &\text{$\nabla v_{\tau_n} \rightharpoonup \nabla v$, \qquad weakly in $L^2(0,T; L^2(\R^d)).$ }
\end{align*}
\end{enumerate}
\end{theorem}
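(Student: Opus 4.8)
\emph{The two uniform inputs.} The proof rests on two bounds that are uniform in $\tau$. \emph{(a) A bound for $(u_\tau)_\tau$ in $L^2(0,T;\Hnuw)$:} by Corollary~\ref{cor:bound-utau}, whose right-hand side is controlled in $\tau$ as soon as the discrete entropies are (e.g.\ via $\cH(u_\tau^0)\le\cH(u_0)<\infty$, which is why one works with $u_0\in D(\cH)$), one has $\sup_\tau\int_0^T\|u_\tau(t)\|_{\Hnuwd}^2\,\d t<\infty$; since moreover $\|u_\tau(t)\|_{\Hnuid}^2=2\cF(u_\tau(t))\le 2\cF(u_0)$ by Proposition~\ref{prop:discrete}$(i)$, the interpolation estimate $\|w\|_{L^2}^2\le 2\kappa_\nu(\|w\|_{\Hnuwd}^2+\|w\|_{\Hnuid}^2)$ of Theorem~\ref{thm:interpolating} promotes this to a uniform bound of $(u_\tau)_\tau$ in $L^2(0,T;\Hnuw)$. \emph{(b) A bound for $(\nabla v_\tau)_\tau$ in $L^2(0,T;L^2(\R^d))$:} since $\widehat v_\tau=\psi^{-1}\widehat u_\tau$, one has $\|\nabla v_\tau(t)\|_{L^2}^2=\int_{\R^d}|\xi|^2\psi^{-2}(\xi)|\widehat u_\tau(t,\xi)|^2\,\d\xi=\|u_\tau(t)\|_{\Hnusd}^2$, and under \eqref{eq:lower-bound-cond} the second estimate of Theorem~\ref{thm:interpolating} bounds this by $c_\nu^{-1}(\|u_\tau(t)\|_{\Hnuwd}^2+\|u_\tau(t)\|_{\Hnuid}^2)$, whose time integral is finite by (a). Extracting a further subsequence of the one from Theorem~\ref{thm:mini-mvt-scheme} (not relabeled) gives $u_{\tau_n}\rightharpoonup\bar u$ weakly in $L^2(0,T;\Hnuw)$ and $\nabla v_{\tau_n}\rightharpoonup w$ weakly in $L^2(0,T;L^2(\R^d))$.

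\emph{Identification of $\bar u$ and the weak statement in $(i)$.} For $\varphi\in C_c^\infty((0,T))$ and $\phi\in C_c^\infty(\R^d)$ (so that $\langle\cdot,\phi\rangle$ is continuous on $\Hnuw$ and $\phi\in C_b(\R^d)$), weak convergence gives $\int_0^T\varphi(t)\langle u_{\tau_n}(t),\phi\rangle\,\d t\to\int_0^T\varphi(t)\langle\bar u(t),\phi\rangle\,\d t$, while $u_{\tau_n}(t)\to u(t)$ narrowly for each $t$ by Theorem~\ref{thm:mini-mvt-scheme}, so $\langle u_{\tau_n}(t),\phi\rangle\to\langle u(t),\phi\rangle$ pointwise in $t$ with $|\langle u_{\tau_n}(t),\phi\rangle|\le\|\phi\|_{L^\infty}$; dominated convergence identifies the two limits. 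As $\varphi,\phi$ are arbitrary, $\bar u=u$ a.e., whence $u\in L^2(0,T;\Hnuw)$ and $u_{\tau_n}\rightharpoonup u$ weakly there.

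\emph{Strong convergence in $(i)$.} Assume \eqref{eq:compactness-ass}; since this is exactly \eqref{eq:symb-asump-compact} with $\phi=\widetilde\psi$, Theorem~\ref{thm:local-comp-symb} yields the compact embedding $\Hnuw=H^{\widetilde\psi}(\R^d)\hookrightarrow\hookrightarrow L^2_{\loc}(\R^d)$. First, for a.e.\ $t$ the sequence $(u_{\tau_n}(t))_n$ is bounded in $\Hnuw$ (Fatou applied to the $L^2(0,T;\Hnuw)$-bound), hence has a subsequence converging strongly in $L^2_{\loc}$ to a limit that must be $u(t)$ by narrow convergence; the subsequence principle then gives $u_{\tau_n}(t)\to u(t)$ in $L^2_{\loc}$ for a.e.\ $t$. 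To upgrade this to convergence in $L^2(T_0,T;L^2_{\loc})$ I would use an Aubin--Lions--Simon type compactness result: $(u_{\tau_n})$ is bounded in $L^2(T_0,T;\Hnuw)$ with $\Hnuw\hookrightarrow\hookrightarrow L^2_{\loc}$, and the estimate $\int_0^\infty m_\tau^2\le 2\cF(u_0)$ from the proof of Theorem~\ref{thm:mini-mvt-scheme}, combined with the triangle inequality and Cauchy--Schwarz, gives the uniform modulus $W\big(u_\tau(t+h),u_\tau(t)\big)\le\sqrt{2\cF(u_0)}\,(h+\tau)^{1/2}$; since $W_1\le W$ and $W_1(\mu,\nu)$ controls the $\big(W^{1,\infty}(K)\big)'$-norm of $\mu-\nu$, the time-translates of $u_{\tau_n}$ are equicontinuous into $\big(W^{1,\infty}(K)\big)'$ up to an $O(\tau_n^{1/2})$ error absorbed by passing to a further subsequence. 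This gives relative compactness in $L^2(T_0,T;L^2_{\loc})$, and the a.e.-in-$t$ identification above pins every limit point as $u$, so $u_{\tau_n}\to u$ strongly in $L^2(T_0,T;L^2_{\loc})$. I expect this step --- the interplay of the compact spatial embedding with the merely weak time-regularity coming from the Wasserstein estimate --- to be the main obstacle.

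\emph{Identification of $w$ in $(ii)$.} Applying the second estimate of Theorem~\ref{thm:interpolating} to $u(t)$ itself and using $u\in L^2(0,T;\Hnuw)$ together with $\|u(t)\|_{\Hnuid}^2=2\cF(u(t))\le 2\cF(u_0)$ (narrow lower semicontinuity of $\cF$, Theorem~\ref{thm:lower-semicontinuous}, and Proposition~\ref{prop:discrete}$(i)$), we get $\int_0^T\|\nabla v(t)\|_{L^2}^2\,\d t=\int_0^T\|u(t)\|_{\Hnusd}^2\,\d t<\infty$, i.e.\ $\nabla v\in L^2(0,T;L^2(\R^d))$. For $w=\nabla v$, observe that $u_{\tau_n}\rightharpoonup u$ weakly in $L^2(0,T;\Hnuw)\hookrightarrow L^2(0,T;L^2(\R^d))$, hence $\widehat u_{\tau_n}\rightharpoonup\widehat u$ weakly in $L^2((0,T)\times\R^d)$, while $\widehat{\nabla v_{\tau_n}}(t,\xi)=i\xi\,\psi^{-1}(\xi)\widehat u_{\tau_n}(t,\xi)$. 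Testing against $g\in C_c^\infty\big((0,T)\times(\R^d\setminus\{0\})\big)$, a dense subclass of $L^2((0,T)\times\R^d)$: on $\supp g$ the (real) multiplier $i\xi\,\psi^{-1}(\xi)$ is bounded ($\psi$ being continuous and positive away from the origin), so moving it onto the other factor of the pairing produces an $L^2$ function $h_g$ with $\langle\widehat{\nabla v_{\tau_n}},g\rangle=\langle\widehat u_{\tau_n},h_g\rangle\to\langle\widehat u,h_g\rangle=\langle\widehat{\nabla v},g\rangle$; since also $\widehat{\nabla v_{\tau_n}}\rightharpoonup\widehat w$ and both $w,\nabla v\in L^2$, density of the $g$'s forces $w=\nabla v$, which is the assertion in $(ii)$.
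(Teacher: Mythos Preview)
Your argument follows the paper's strategy and is largely correct; in fact it is more explicit than the paper's own proof, which for the identification step merely records that narrow convergence at each $t$ yields pointwise-in-$\xi$ convergence of Fourier transforms and then asserts that ``all claimed weak convergences are true once the boundedness of $(u_\tau)_\tau$ in $\Hnuwd\cap\Hnuid$ is established,'' and for the strong part simply notes the compact embedding $\Hnuw\hookrightarrow L^2_{\loc}(\R^d)$ and declares that the strong convergence ``follows,'' without spelling out any space--time compactness argument. Your explicit Aubin--Lions--Simon step, with the time regularity coming from the Wasserstein modulus $W(u_\tau(t+h),u_\tau(t))\lesssim (h+\tau)^{1/2}$, is exactly what is needed to fill that gap.

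One inaccuracy to flag: the claim that ``for a.e.\ $t$ the sequence $(u_{\tau_n}(t))_n$ is bounded in $\Hnuw$ (Fatou applied to the $L^2(0,T;\Hnuw)$-bound)'' is not justified. A uniform bound $\sup_n\int_0^T\|u_{\tau_n}(t)\|_{\Hnuw}^2\,\d t\le C$ does \emph{not} imply $\sup_n\|u_{\tau_n}(t)\|_{\Hnuw}<\infty$ for a.e.\ $t$; Fatou only gives $\int_0^T\liminf_n\|u_{\tau_n}(t)\|_{\Hnuw}^2\,\d t\le C$, which controls $\liminf_n$, not $\sup_n$. (Take $f_n=\sqrt{n}\,\mathds{1}_{I_n}$ with $|I_n|=1/n$ and $\limsup_n I_n=[0,1]$ for a counterexample.) Fortunately this detour is unnecessary: once your Aubin--Lions--Simon argument produces a subsequence converging strongly in $L^2(T_0,T;L^2(K))$ for each compact $K$, its limit is automatically identified with $u$ by the weak convergence $u_{\tau_n}\rightharpoonup u$ in $L^2(0,T;\Hnuw)\hookrightarrow L^2(T_0,T;L^2(K))$ that you have already established. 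You may therefore drop the a.e.-in-$t$ paragraph entirely.
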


\begin{proof}
By Plancherel's Theorem, we deduce that 
\begin{align*}
    &\|\nabla v_\tau \|^2_{L^2(\R^d)} = \int_{\R^d} |\widehat{v}_\tau(\xi)|^2 |\xi|^2\d \xi
    = \int_{\R^d} |\widehat{u}_\tau(\xi)|^2 |\xi|^2\psi^{-2}(\xi)\d \xi
    = \|u_\tau \|^2_{\Hnusd}.
\end{align*}
Moreover, by Theorem \ref{thm:interpolating}, we know that 
\begin{align*}
    \|u_\tau \|^2_{L^2(\R^d)} \leq C\left(\|u_\tau \|^2_{\Hnuwd} +  \|u_\tau \|^2_{\Hnuid}\right), 
\end{align*}
and, using Condition \eqref{eq:lower-bound-cond}, we also have 
\begin{align*}
    \|u_\tau \|^2_{\Hnusd} \leq C\left(\|u_\tau \|^2_{\Hnuwd}+ \|u_\tau \|^2_{\Hnuid}\right). 
\end{align*}
Note that the narrow convergence  $u_{\tau_n}\to u$ implies the pointwise convergence of the Fourier transforms, \emph{i.e.}, $\widehat{u}_{\tau_n} \to \widehat{u}$. Similarly, $\widehat{v}_{\tau_n}(\xi)\to \widehat{v}(\xi)$, for all $\xi \in \R^d$. Therefore, all claimed weak convergences are true once the boundedness of $(u_\tau)_\tau$ in $\Hnuwd \cap \Hnuid$ is established. 
By Proposition \ref{prop:discrete}, it follows that  $\cF(u_\tau)\leq  \cF(u_0) $. That is, for any $T_0 \in (0,T)$
\begin{align*}
    \int_{T_0}^T \|u_\tau(t)\|^2_{\Hnuid} \d t \leq (T-T_0) \|u_0\|^2_{\Hnuid}.
\end{align*}
By Corollary \ref{cor:bound-utau}, we know that for $T> T_0\geq\tau> 0$,
\begin{align*}
    \int_{T_0}^T\|u_{\tau}(t)\|^2_{\Hnuwd}\d t\leq \cH(u_{\tau}^{N_0(\tau)})
    + \tilde \kappa \left(1+\cF(u_0)+ \int_{\R^d} |x|^2\d u_0(x)\right).
\end{align*}
where $N_0(\tau)=\lfloor T_0/\tau\rfloor$ and $\tilde \kappa$ is a constant only depending on $T$ and $d$.

Since $u_0\in D(\cH)$ we have $\cH(u_{\tau}^{N_0(\tau)}) \leq \cH(u_0) < \infty$. Moreover, under Condition \eqref{eq:compactness-ass}, the embedding $\Hnuw\hookrightarrow L^2_{\loc}(\R^d)$ is compact in virtue of Theorem \ref{thm:local-comp-symb}. Thence, the strong convergence of $(u_{\tau_n})_{n}$ in  $L^2(T_0,T; L^2_{\loc}(\R^d))$ follows. 
\end{proof}

\section{Weak solution of the equation and energy dissipation inequality}
\label{sect:dissipationsolution}
This section is dedicated to establishing the energy dissipation inequality, item $(v)$ of Theorem \ref{thm:main-theorem}. 
Moreover, we identify the limit curve as a weak solution of Eq. \eqref{eq:main_eqn}, Theorem \ref{thm:main-theorem} $(iv)$, in the sense of Eq. \eqref{eq:def:weak-solutions}. 
To this end, let us begin by deriving the associated Euler-Lagrange equations.

\begin{theorem}
\label{thm:euler-lagrange-discrete}
Assume that ${\nu} \notin L^1(\R^d)$ and satisfies Condition \eqref{eq:lower-bound-cond}. Moreover, assume the symbol $\widetilde{\psi}$ is associated with a unimodal L{\'e}vy kernel $\widetilde{\nu}$ satisfying the following condition: 
\begin{align}
    \label{eq:double-cond-origin-bis}
    \text{For any $0<\lambda<1$ there is $c_\lambda>0$ s.t. $\widetilde{\nu}(\lambda h)\leq c_\lambda\widetilde{\nu} (h)$ whenever $|h|\leq 1$}.
\end{align} 
Let $u_{0} \in \Hnuid \cap \mathcal{P}_2(\R^d)$ and let $(u^{k}_{\tau})_k$ be the associated solution to the minimizing movement scheme, Eq. \eqref{eq:discrete}. Define $(v^{k}_{\tau})_k$ by ${v}^{k}_{\tau} := L^{-1} {u}^{k}_{\tau}$. Then, for any $k \geq 0$, we have 
\begin{align}
    \label{eq:firsteq}
    \int_{\R^{d}} \nabla {v}^{k}_{\tau} \cdot \eta u^{k}_{\tau} \, \d x = \frac{1}{\tau}  \int_{\R^{d}}\big(T_{u^{k}_{\tau}}^{u^{k-1}_{\tau}} - \mathrm{I}\big) \cdot \eta u^{k}_{\tau} \, \d x, \qquad\text{for all \, $\eta \in C^{\infty}_{c} (\R^{d}, \R^{d})$}.
\end{align}
We recall that $T_u^v$ denotes the transport plan between $u$ and $v$, see Section \ref{sect:exist}. Moreover, we have 
\begin{align}
    \label{eq:secondeq}
      \int_{\R^{d}} \big| \nabla {v}^{k}_{\tau} \big|^{2} u^{k}_{\tau} \, \d x = 
    \frac{1}{\tau^{2}} W^{2}\big(u^{k}_{\tau}, u^{k-1}_{\tau} \big).
\end{align}
\end{theorem}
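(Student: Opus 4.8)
The strategy is the classical first-variation (Euler--Lagrange) computation for the JKO scheme, adapted to the nonlocal energy $\cF$. Fix $k\geq 1$ and write $u=u^k_\tau$, $u_*=u^{k-1}_\tau$, and $v=L^{-1}u$. Since $u$ minimizes $w\mapsto \frac1{2\tau}W^2(w,u_*)+\cF(w)$ over $\cP_2(\R^d)$, I would perturb $u$ by pushing it forward along a compactly supported vector field: for $\eta\in C_c^\infty(\R^d,\R^d)$ and $\varepsilon\in\R$ small, set $\Phi_\varepsilon=\mathrm{I}+\varepsilon\eta$, which is a bi-Lipschitz diffeomorphism for $|\varepsilon|$ small, and let $u_\varepsilon=(\Phi_\varepsilon)_\#u$. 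Then $g(\varepsilon):=\frac1{2\tau}W^2(u_\varepsilon,u_*)+\cF(u_\varepsilon)$ attains its minimum at $\varepsilon=0$, so $g'(0)=0$ provided both terms are differentiable. The first key step is to show $u_\varepsilon\in D(\cF)=\Hnuid\cap\cP_2(\R^d)$; since $\widetilde\nu$ is a unimodal Lévy kernel satisfying the scaling condition \eqref{eq:double-cond-origin-bis} and $u\in\Hnuwd$ (Lemma \ref{lem:disp-entropy-H}), Theorem \ref{thm:push-bi-Lipschtz} applied to the symbol $\widetilde\psi$ (equivalently, to the space $\Hnuw=\Hnuwd\cap L^2$) shows $u_\varepsilon\in\Hnuw$, and one must also check $u_\varepsilon\in\Hnuid$, which follows from $\cF(u_\varepsilon)<\infty$; this latter finiteness is exactly what the Euler--Lagrange perturbation needs, and I expect this to be the main technical obstacle, since $\cF$ is governed by $\psi^{-1}$ rather than a Lévy symbol.

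The second step is to differentiate the two terms at $\varepsilon=0$. For the Wasserstein term, the standard estimate (see e.g. \cite[Lemma 8.3.1]{AGS08} or the argument in \cite[Theorem 5.1]{LMS2018}) gives
\begin{align*}
\frac{\d}{\d\varepsilon}\Big|_{\varepsilon=0}\frac1{2\tau}W^2(u_\varepsilon,u_*)= \frac1\tau\int_{\R^d}\big(x-T^{u_*}_{u}(x)\big)\cdot\eta(x)\,u(x)\,\d x= -\frac1\tau\int_{\R^d}\big(T^{u_*}_{u}-\mathrm{I}\big)\cdot\eta\,u\,\d x,
\end{align*}
using that $u$ is absolutely continuous so the optimal plan is induced by the map $T^{u_*}_u$. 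For the energy term, I compute in Fourier variables: $\cF(u_\varepsilon)=\frac12\int|\widehat{u_\varepsilon}(\xi)|^2\psi^{-1}(\xi)\,\d\xi$, and use $\widehat{u_\varepsilon}(\xi)=\int e^{-i\xi\cdot\Phi_\varepsilon(x)}u(x)\,\d x$ so that $\partial_\varepsilon\widehat{u_\varepsilon}(\xi)|_{\varepsilon=0}=-i\int e^{-i\xi\cdot x}\xi\cdot\eta(x)\,u(x)\,\d x=\widehat{-\div(u\eta)}(\xi)\cdot(\text{const})$; differentiating under the integral (justified by the domination coming from Step 1 together with $\eta$ compactly supported) yields
\begin{align*}
\frac{\d}{\d\varepsilon}\Big|_{\varepsilon=0}\cF(u_\varepsilon)= \Real\int_{\R^d}\overline{\widehat u(\xi)}\,\widehat{-\div(u\eta)}(\xi)\,\psi^{-1}(\xi)\,\d\xi= \big(v,-\div(u\eta)\big)_{L^2}= \int_{\R^d}\nabla v\cdot\eta\,u\,\d x,
\end{align*}
where in the last two equalities I used $\widehat v=\psi^{-1}\widehat u$, Plancherel, and an integration by parts (valid since $v\in\Hnuwd$ gives $\nabla v\in L^2$ under \eqref{eq:lower-bound-cond}, by the interpolation Theorem \ref{thm:interpolating}, and $u\eta\in L^2$ with compact support). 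Setting $g'(0)=0$ gives exactly \eqref{eq:firsteq}.

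For \eqref{eq:secondeq}, the plan is to test \eqref{eq:firsteq} against a vector field approximating $\nabla v$ itself. Since $\nabla v\in L^2(\R^d)$ and $u\in L^\infty$-type bounds are not available but $u\in\cP_2\cap L^2$, I would choose $\eta_j\in C_c^\infty(\R^d,\R^d)$ with $\eta_j\to\nabla v$ in $L^2(\R^d)$ (and, after truncation/mollification, also $u^{1/2}\eta_j\to u^{1/2}\nabla v$ in $L^2$, using $u\in L^\infty_{\rm loc}$ is not needed — one uses that $u\,|\nabla v|^2\in L^1$, which itself follows once \eqref{eq:secondeq} is being proven, so care is needed and one argues by a density/monotone-convergence bootstrap). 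Passing to the limit in \eqref{eq:firsteq} gives
\begin{align*}
\int_{\R^d}|\nabla v|^2 u\,\d x= \frac1\tau\int_{\R^d}\big(T^{u_*}_u-\mathrm{I}\big)\cdot\nabla v\,\,u\,\d x\leq \frac1\tau\Big(\int|T^{u_*}_u-\mathrm{I}|^2u\Big)^{1/2}\Big(\int|\nabla v|^2u\Big)^{1/2}= \frac{W(u,u_*)}\tau\Big(\int|\nabla v|^2u\Big)^{1/2},
\end{align*}
which yields $\int|\nabla v|^2u\,\d x\leq W^2(u,u_*)/\tau^2$. For the reverse inequality one uses that the optimal displacement $(T^{u_*}_u-\mathrm{I})/\tau$ is itself a gradient (a consequence of Brenier's theorem: $T^{u_*}_u=\nabla\varphi$ for a convex $\varphi$), so testing \eqref{eq:firsteq} with $\eta$ approximating $(T^{u_*}_u-\mathrm{I})/\tau$ in $L^2(u\,\d x)$ gives $\int\nabla v\cdot\tfrac{T^{u_*}_u-\mathrm{I}}\tau\,u=\tfrac1{\tau^2}W^2(u,u_*)$ and Cauchy--Schwarz the other way; combined with the first bound this forces equality in Cauchy--Schwarz, i.e. $\nabla v=(T^{u_*}_u-\mathrm{I})/\tau$ $u$-a.e., from which \eqref{eq:secondeq} follows immediately. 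The main obstacle throughout is the justification of differentiating $\cF$ under the integral and the approximation arguments for the non-smooth test fields, both of which hinge on the push-forward stability (Theorem \ref{thm:push-bi-Lipschtz}) and the interpolation estimate (Theorem \ref{thm:interpolating}) holding under hypotheses \eqref{eq:lower-bound-cond}, \eqref{eq:double-cond-origin-bis}.
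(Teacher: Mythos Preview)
Your overall strategy matches the paper's: perturb the minimizer by push-forward along $\Phi_\varepsilon=\mathrm{I}+\varepsilon\eta$, differentiate the Wasserstein term classically, and differentiate $\cF(u_\varepsilon)$ in Fourier variables. The derivation of \eqref{eq:secondeq} by approximating $\nabla v$ with smooth $\eta$ is also exactly what the paper does (in one line).

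However, there is a genuine gap in your treatment of $\frac{\d}{\d\varepsilon}\cF(u_\varepsilon)\big|_{\varepsilon=0}$. You assert that differentiation under the integral is ``justified by the domination coming from Step~1 together with $\eta$ compactly supported'', but you never produce such a dominant, and none is evident: the integrand involves $|\widehat{u_\varepsilon}(\xi)|\,|\xi|\,|\widehat{(\Phi_\varepsilon)_\#(\eta u)}(\xi)|\,\psi^{-1}(\xi)$, and a uniform-in-$\varepsilon$ pointwise $L^1$ majorant is not a consequence of $u_\varepsilon\in\Hnuw$. You yourself flag $\cF(u_\varepsilon)<\infty$ as ``the main technical obstacle'' but do not resolve it. The paper devotes the bulk of its proof (four of six steps) precisely to this point. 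It factorizes the difference quotient as
\[
\frac{\cF(u_\delta)-\cF(u)}{\delta}=\frac12\int_{\R^d}|\xi|\big(\widehat{v_\delta}(-\xi)+\widehat{v}(-\xi)\big)\cdot\frac{1}{\delta}|\xi|^{-1}\big(\widehat{u_\delta}(\xi)-\widehat{u}(\xi)\big)\,\d\xi,
\]
and then (i) shows $(u_\delta)_\delta$ is bounded in $\Hnuw$ via Theorem~\ref{thm:push-bi-Lipschtz} (this is where the hypotheses on $\widetilde\nu$ enter), (ii) upgrades this to $\nabla v_\delta\to\nabla v$ \emph{strongly in $L^2_{\loc}$} via the compact embedding $\Hnu\hookrightarrow L^2_{\loc}$ of Theorem~\ref{thm:local-compactness} (this is where $\nu\notin L^1$ enters), (iii) shows the second factor converges \emph{weakly in $L^2$} to $-i|\xi|^{-1}\xi\cdot\widehat{\eta u}$, and (iv) passes to the limit via a weak--strong pairing on balls, controlling the tail first for smooth $u$ (where explicit decay of $\nabla K_1\ast(u-u_\delta)$ is available) and then for general $u\in\Hnuw\cap\Hnuid$ by approximation. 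Your proposal collapses (i)--(iv) into a single unjustified line.

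A minor additional point: the intermediate expression $(v,-\div(u\eta))_{L^2}$ in your chain need not make sense, since $v=L^{-1}u$ is not known to lie in $L^2(\R^d)$ (only $\nabla v\in L^2$ under \eqref{eq:lower-bound-cond}); the correct route is to pair $\widehat{\nabla v}=i\xi\psi^{-1}\widehat u$ directly with $\widehat{u\eta}$ and apply Plancherel at that stage.
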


\begin{proof}
\textbf{Step 1. -- Perturbation of minimizers.}\\
For ease of notation, throughout the proof we shall simply write $u := {u}^{k}_{\tau}$ and $v := v^{k}_{\tau}$. Given $\delta>0$, we define $\phi_{\delta}: \R^{d} \to \R^{d}$ as $\phi_{\delta}(x) := x + \delta \eta(x)$. Clearly, for $\delta_0>0$ small enough, we have 
 \begin{align}\label{eq:det-bound}
     \frac{1}{2}\leq \det (D\phi_\delta(x))\leq\frac{3}{2}, 
 \end{align}
 for all $x\in \R^d$, $\delta\in [0,\delta_0]$. Define $u_{\delta} := {\phi_{\delta}}_\# u^{k}_{\tau}= \det(D \phi_\delta)^{-1} u^{k}_{\tau}\circ \phi_\delta^{-1}$ and $\widehat{v}_{\delta} := \psi^{-1} \widehat{u_\delta}$.  Since $u$ is optimal in Eq. \eqref{eq:discrete}, there holds
\begin{align*}
    0 \leq \frac{1}{\delta}
    \left[ 
    \mathcal{F}(u_{\delta}) - \mathcal{F}(u) + \frac{1}{2 \tau}
    \left( 
    W^{2}(u_{\delta}, u^{k-1}_{\tau}) -  W^{2}(u, u^{k-1}_{\tau}) 
    \right) 
    \right].
\end{align*}
The second term is classical which is known to satisfy
\begin{align*}
   \lim_{\delta \to 0} \frac{1}{\delta}
   \left[ 
   \frac{1}{2 \tau}
   \left( 
   W^{2}(u_{\delta}, u^{k-1}_{\tau}) - W^{2}(u, u^{k-1}_{\tau}) 
   \right) 
   \right]
   = \frac{1}{\tau}  \int_{\R^{d}}\big(T_{u^{k}_{\tau}}^{u^{k-1}_{\tau}} - \mathrm{I}\big) \cdot \eta u^{k}_{\tau} \, \d x,
\end{align*}
which can be adapted from \cite[Theorem 8.13] {Vil03}, see also \cite[Section 7.2.2]{San15}. The rest of the proof focuses on the treating the limit

\begin{align*}
   \lim_{\delta \to 0} \frac{1}{\delta}
   \big[ 
        \mathcal{F}(u_{\delta}) - \mathcal{F}(u) 
   \big].
\end{align*}
Switching to Fourier, let us rewrite the energy
\begin{align}
    \label{eq:polardecomp}
    \frac{1}{\delta}
    \big[ \mathcal{F}(u_{\delta}) - \mathcal{F}(u) 
    \big] 
    = \frac{1}{2\delta}
    \int_{\R^d} \big(|\widehat{u_\delta}(\xi)|^2- |\widehat{u}(\xi)|^2\big)\psi^{-1}(\xi)\d \xi.
\end{align}
Using the identity 
$$ 
    |\widehat{u}_\delta(\xi)|^2 - |\widehat{u}(\xi)|^2 = \big(\overline{ \widehat{u}_\delta}(\xi) - \overline{\widehat{u}}(\xi)\big) \big(\widehat{u}_\delta(\xi) +\widehat{u}(\xi)\big) + \overline{\widehat{u}_\delta}(\xi) \widehat{u}(\xi) - \widehat{u}_\delta(\xi) \overline{\widehat{u}}(\xi),
$$
in conjunction with $\widehat{v}(\xi) = \psi^{-1}(\xi)  \widehat{u}(\xi)$ and $\widehat{v}_{\delta}(\xi)=  \psi^{-1}(\xi) \widehat{u}_{\delta}(\xi)$, the variation of the energy can be simplified such that
\begin{align}
    \label{eq:factorization}
    \frac{1}{\delta}
    \big[ 
        \mathcal{F}(u_{\delta}) - \mathcal{F}(u) 
    \big]
    = \frac{1}{2} \int_{\R^{d}} |\xi| \big( \widehat{v}_{\delta}(-\xi) + \widehat{v}(-\xi) \big) \cdot \frac{1}{\delta} |\xi|^{-1}\big( \widehat{u}_{\delta}(\xi) - \widehat{u}(\xi) \big) \, \d \xi.
\end{align}
Next, let $R>1$ be sufficiently large such that $\supp\eta\subset B(0,R)$. By definition of $u_\delta$, we have 
\begin{align*}
    \widehat{u}_{\delta}(\xi)- \widehat{u}(\xi)
    &= \int_{\R^d} \exp(- i \xi \cdot (x + \delta \eta(x)) ) u(x) \, \d x- \int_{\R^d} \exp(- i \xi \cdot x) u(x) \, \d x\\
    &=\int_{B(0,R)} \exp(- i \xi \cdot x) \ \big(\exp(- i \xi \cdot \delta \eta(x)) -1\big)\ u(x) \, \d x.
\end{align*} 
Then, the dominated convergence theorem implies the pointwise convergence $\widehat{u}_{\delta}(\xi)\to \widehat{u}(\xi)$, as $\delta \to 0$. Therefore, we also have $|\xi| \widehat{v}_{\delta}(-\xi)\to |\xi|\widehat{v}(-\xi)$ as $\delta \to 0$, pointwise.

\medskip

\textbf{Step 2. -- Boundedness of $(u_{\delta})_\delta$ in $\Hnuw$.}\\
Note that by Corollary \ref{cor:disp-entropy-H} we have $u\in \Hnuw$. According to Theorem \ref{thm:energy-fourier}, the existence of $\widetilde{\nu}$ implies that $H_{\widetilde{\nu}}(\R^d) = \Hnuw$, and we have to estimate
\begin{align*}
    \|u_\delta\|^2_{\Hnuw} 
    &= \iil_{\R^d\R^d} |u_\delta(x)-u_\delta(y)|^2\widetilde{\nu}(x-y)\d y\d x\\
    &\leq \iil_{\R^d\R^d} \left[2|u\circ \phi_\delta^{-1}(x)-u\circ \phi_\delta^{-1}(y)|^2|h_\delta(x)|^2+ 2|h_\delta(x)-h_\delta(y)|^2|u_\delta(y)|^2\right] \widetilde{\nu}(x-y) \d y\d x,
\end{align*}
having used $u_\delta= h_\delta u\circ \phi_\delta^{-1}$, where $h_\delta:=\det(D \phi_\delta)^{-1} $. Since $h_\delta (x) = \det((I+\delta\, D\eta)^{-1} (x) )$ is at least $W^{1,\infty}(\R^d)$, there exists $A>0$ independent of $\delta >0$ such that $\|h_\delta\|_{W^{1,\infty}(\R^d)}\leq A$. Hence, for all $x,y\in \R^d$ we have 
\begin{align*}
    |h_\delta(x) |\leq A, \quad\text{as well as}\quad |h_\delta(x) -h_\delta(y)|\leq 2A(1\land|x-y|). 
\end{align*}
Therefore, 
\begin{align}
    \label{eq:u_delta_intermediate}
    \begin{split}
    \|u_\delta\|^2_{\Hnuw} 
    &\leq 2A^2 \iil_{\R^d\R^d} |u\circ \phi_\delta^{-1}(x)-u\circ \phi_\delta^{-1}(y)|^2 \widetilde{\nu}(x-y)\d y\d x \\
    &\quad + 4A^4 \iil_{\R^d\R^d} (1 \land |x-y|^2) |u\circ \phi_\delta^{-1}(y)|^2 \widetilde{\nu}(x-y)\d y\d x,
    \end{split}
\end{align}
Now, by Theorem \ref{thm:push-bi-Lipschtz}, we find
\begin{align}
    \label{eq:bound-delta_1}
    \|u\circ \phi_\delta^{-1}\|^2_{\Hnuw} \leq 
    C\big(1+\| \det D\phi_\delta \|_{L^\infty(\R^d)}\big)^2\|u\|^2_{\Hnuw}
    &\leq C \|u\|^2_{\Hnuw},  
\end{align} 
where $C>0$ is independent of $\delta$ by Eq. \eqref{eq:det-bound}.  Using Eq. \eqref{eq:bound-delta_1} in Eq. \eqref{eq:u_delta_intermediate}, we finally have 
\begin{align}
    \label{eq:delta-uniform1}
    \begin{split}
    \|u_\delta\|^2_{\Hnuw}
    &\leq 4A^2\Big(1 + A^2 \int_{\R^d} 1\land|h|^2\widetilde{\nu}(h)\d h\Big)\|u\circ \phi_\delta^{-1}\|^2_{\Hnuw}\\
    &\leq C\|u\|^2_{\Hnuw}, 
    \end{split}
\end{align}
where $C>0$ is independent of $\delta$.

\medskip 

\textbf{Step 3. -- Strong convergence $\nabla v_{\delta} \to \nabla v$ in $L^{2}_{\loc}(\R^d)$.}\\
The goal is to apply the compactness Theorem \ref{thm:local-compactness}. Therefore, it is sufficient to establish the boundedness of $\nabla(v_\delta-v)$ in $\Hnu= \Hnup$, see Remark \ref{rem:symbol}. Under Condition \eqref{eq:lower-bound-cond} and by proceeding as in the proof of Theorem \ref{thm:interpolating}, we obtain

\begin{align}
    \label{eq:difference-grad-v_delta}
    \begin{split}
    \| \nabla v_{\delta} - \nabla v \|^2_{L^{2}(\R^d)} &= \|u_{\delta} - u\|^2_{\Hnusd} \\
    &\leq c_\nu^{-1} \int_{|\xi|>1} |\widehat{u_{\delta}} (\xi) - \widehat{u} (\xi)|^2\widetilde{\psi}(\xi) \d \xi 
    + c_\nu^{-1} \int_{|\xi|\leq 1} |\widehat{u_{\delta}} (\xi) - \widehat{u} (\xi)|^2 \, \psi^{-1}(\xi) \d \xi \\
    &\leq c_\nu^{-1} \int_{|\xi|>1} |\widehat{u_{\delta}} (\xi) - \widehat{u} (\xi)|^2\widetilde{\psi}(\xi) \d \xi 
    + c_\nu^{-1} \int_{|\xi|\leq 1} |\widehat{u_{\delta}} (\xi) - \widehat{u} (\xi)|^2 \, |\xi|^{-2} \d \xi \\
    &\leq  c_\nu^{-1} \left( \|u_{\delta}-u\|^2_{\Hnuwd} + \|u_{\delta}-u\|^2_{\dot{H}^{-1}(\R^d)} \right).
    \end{split}
\end{align} 
Next, let us show that $\|u_{\delta}-u\|^2_{\dot{H}^{-1}(\R^d)} \leq C  \|u_{\delta}-u\|^2_{L^2(\R^d)}$ for some $C>0$ not depending on $\delta>0$, which then implies 
$$
    \| \nabla v_{\delta} - \nabla v \|^2_{L^{2}(\R^d)} \lesssim \| u_{\delta} - u \|^2_{H^{\tilde \psi}(\R^d)}.
$$
Let us recall that the Riesz kernel
$$
    K_{1}(x) := 
    \left\{
    \begin{array}{ll}
        \frac{1}{2\pi}\log|x|, & d = 2, \\[1em]
         C_{d,-1}|x|^{2-d}, & d\geq 3,
    \end{array}
    \right.
$$  
satisfies $\widehat{K_{1}}(\xi)= |\xi|^{-2}$, see for instance Eq. \eqref{eq:fourier-riesz-potential}-\eqref{eq:riesz-operator}. As the one-dimensional case is particular, we will treat it separately below and focus on $d\geq 2$. Now, we may estimate
\begin{align*}
    \|u_{\delta}-u\|^2_{\dot{H}^{-1}(\R^d)} 
    = \int_{\R^d} |\widehat{u_{\delta}} (\xi)-\widehat{u} (\xi)|^2 \, \widehat{K_1}(\xi) \d \xi=  \int_{B(0,R)} (u_{\delta}-u)(x) K_1*(u_{\delta}-u) (x)\d x.
\end{align*}
In the last equality, we exploited the fact that $\supp (u_{\delta}-u)\subset B(0,R)$ and $R>1$ independent of $\delta>0$. Moreover, we have
\begin{align*}
    \begin{split}
     K_1*(u_{\delta}-u) (x)
    &= \int_{\R^{d}} (u_{\delta}(y) - u(y))  \mathds{1}_{B(0,R)}(y)K_1(x-y) \d y,
    \end{split}    
\end{align*}
whence, for $x\in B(0,R)$, we obtain
\begin{align*}
    | K_1*(u_{\delta}-u)(x)|\leq 
    \int_{\R^{d}} |u_{\delta}(y) - u(y)|  \mathds{1}_{B(0,2R)}(x-y)K_1(x-y) \d y.
\end{align*}

This combined with Young's convolution inequality gives
\begin{align*}
    \|K_1*(u_{\delta} - u)\|_{L^{2}(B(0,R))} \leq \|K_1\|_{L^{1}(B(0,2R))} \|u_{\delta} - u\|_{L^{2}(\R^d)}.
\end{align*}
By Cauchy-Schwartz inequality,
\begin{align}
    \label{eq:u-u_delta_L2-control-dD}
    \begin{split}
    \|u_{\delta}-u\|^2_{\dot{H}^{-1}(\R^d)} 
    &\leq \|u_{\delta} - u\|_{L^{2}(\R^d)} \|(u_{\delta} -u)\ast K_1\|_{L^{2}(B(0,R))}\\
    &\leq \|K_1\|_{L^{1}(B(0,2R))} \|u_{\delta} - u\|^2_{L^{2}(\R^d)}. 
    \end{split}
\end{align}

Now, let us address the one-dimensional case and begin by introducing the anti-derivative of $u-u_\delta$, which is given by
\begin{align}
    \label{eq:x1dim-anti-derivative}
	U(x)
	&= \frac{1}{2} \int_{\R} (u - u_{\delta}) (y)  \operatorname{sgn}(x-y)\d y= \frac{1}{2} \int_{B(0,R)} (u - u_{\delta}) (y)  \operatorname{sgn}(x-y)\d y. 
\end{align}
Here, the notation $\operatorname{sgn}(a)= a / |a|$ denotes the sign of $a\neq0$. Since $\supp(u-u_\delta)\subset B(0,R)$ and $\int_{\R} u-u_\delta\d x=0 $, it follows that $\supp U\subset B(0,R)$ and $\widehat{U}(\xi)= -i\xi^{-1} (\widehat{u}- \widehat{u}_\delta) (\xi),$ $\xi\in \R$.  Moreover, we have 
\begin{align*}
	|U(x)|^2\leq R\|u-u_\delta\|^2_{L^2(\R)}. 
\end{align*}
Hence we get 
\begin{align}
    \label{eq:u-u_delta_L2-control-1D}
    \|u-u_\delta\|^2_{\dot{H}^{-1} (\R) } 
    =  \int_{\R} |\widehat{U}(\xi)|^2\d \xi=  \int_{B(0,R)} |U(x)|^2\d x \leq 2R^2\|u-u_\delta\|^2_{L^2(\R)},
\end{align}
which established the control of $\|u-u_\delta\|^2_{\dot{H}^{-1} (\R) } $ in terms of $ \|u-u_\delta\|^2_{L^2}$.

Substituting Eq. \eqref{eq:u-u_delta_L2-control-dD} (resp. Eq. \eqref{eq:u-u_delta_L2-control-1D} for $d=1$) into Eq. \eqref{eq:difference-grad-v_delta}, we obtain 
\begin{align*}
    \begin{split}
    \| \nabla v_{\delta} - \nabla v \|^2_{L^{2}(\R^d)} 
    &\leq  c_\nu^{-1}
    \left(\|u_{\delta}-u\|^2_{\Hnuwd} + \|u_{\delta}-u\|^2_{\dot{H}^{-1}(\R^d)} 
    \right)
    \leq C\|u_{\delta}-u\|^2_{\Hnuw}.
    \end{split}
\end{align*} 
In particular, Estimate \eqref{eq:delta-uniform1} implies
\begin{align}
    \label{eq:delta-unform2}
    \| \nabla v_{\delta} - \nabla v \|_{L^{2}(\R^d)}
    \leq C\|u\|_{\Hnuw}.
\end{align}  
Since  $|\widehat{\nabla v}_{\delta} (\xi)|= |\xi|\psi^{-1}(\xi) |\widehat{u}_\delta(\xi)|$, it is readily seen that 
\begin{align*}
    \|\nabla v_{\delta}- \nabla v\|^2_{\Hnupd} 
    = \|u_{\delta} -u\|^2_{\Hnuwd}.
\end{align*}
By the boundedness of $(u_\delta)_\delta$ in $\Hnuw$ from \textbf{Step 2.}, we find that 
\begin{align*}
    \|\nabla v_{\delta}- \nabla v\|_{\Hnu} 
    = \|\nabla v_{\delta}- \nabla v\|_{\Hnup}\leq C \|u_{\delta} -u\|_{\Hnuw}\leq C \|u\|_{\Hnuw}.
\end{align*}
Taking into account the pointwise convergence, $|\xi| \widehat{v}_{\delta}(-\xi)\to |\xi|\widehat{v}(-\xi)$ as $\delta \to 0$, the compactness Theorem \ref{thm:local-compactness} implies that $\nabla v_{\delta} \to \nabla v$ in $L^{2}_{\loc}(\R^d)$.  

\medskip

\textbf{Step 4. -- Weak convergence of $\frac{1}{\delta}  |\xi|^{-1}\big( \widehat{u}_{\delta}(\xi) - \widehat{u}(\xi) \big)$.}\\
This step is dedicated to showing
$$
    \frac{1}{\delta}  |\xi|^{-1}\big( \widehat{u}_{\delta}(\xi) - \widehat{u}(\xi) \big) \rightharpoonup -i |\xi|^{-1} \xi \cdot \widehat{\eta u}(\xi),
$$
weakly in $L^{2}(\R^d)$,  as $\delta \to 0 $. To prove this claim, note that, by the dominated convergence theorem, we get 
\begin{align}
    \label{eq:DOM}
    \begin{split}
    \lim_{\delta \to 0} \frac{1}{\delta}\big(  \widehat{u}_{\delta}(\xi) -  \widehat{u}(\xi) \big) 
    &=\lim_{\delta \to 0} \int_{B(0,R)} \frac{1}{\delta} \big(  \exp(- i \xi \cdot  \delta \eta(x) \big) - 1\big) \exp(- i \xi \cdot x) u(x) \, \d x \\
    &=- i \xi \cdot \int_{\R^d}  \exp(- i \xi \cdot x ) \eta(x) u(x) \, \d x\\
    &= -i\xi\cdot\widehat{(\eta u)}(\xi).  
    \end{split}    
\end{align}
In other words, for all $\xi \in \R^d$, letting $\delta \to 0 $ gives 
$$
    \frac{1}{\delta}  |\xi|^{-1}( \widehat{u}_{\delta}(\xi) - \widehat{u}(\xi) ) \to -i |\xi|^{-1} \xi \cdot \widehat{\eta u}(\xi). 
$$
 Now, for fixed $\xi \in \R^{d}$, let us define the function $g_{\xi} : [0, \infty)\to \R$,  $g_{\xi}: \delta \mapsto g_{\xi}(\delta) := \widehat{u}_{\delta}(\xi)$ and ${(\eta u)}_{\delta} := {\phi_{\delta}}_{\#} (\eta u)$. Once again, by the dominated convergence theorem, we obtain 
\begin{align*}
\begin{split}
    g_{\xi}'(\delta) &= \lim_{h \to 0} \frac{1}{h}\big( \widehat{u}_{\delta+h}(\xi) -  \widehat{u}_{\delta}(\xi) \big)\\
    &= \lim_{h \to 0} \int_{B(0,R)}  \frac{1}{h} \big(  \exp\big(- i \xi \cdot  h \eta(x) \big) - 1\big) \exp\big(- i \xi \cdot (x + \delta \eta(x)) \big) u(x) \, \d x \\
    &= - i \xi \cdot \int_{\R^d}  \exp\big(- i \xi \cdot (x + \delta \eta(x)) \big) \eta(x) u(x) \, \d x\\
    &= - i \xi \cdot \widehat{\big(\eta u\big)}_{\delta} (\xi). 
\end{split}    
\end{align*}
 Moreover, note that also by dominated convergence theorem, $g_{\xi}'$ is continuous, and therefore $g_{\xi}$ is of class $C^{1}$. Therefore by the intermediate value theorem, there is some $\delta_{\xi} \in (0, \delta)$ such that 
\begin{align*}
    \frac{1}{\delta}\big( \widehat{u}_{\delta}(\xi)- \widehat{u}(\xi) \big) = g_{\xi}'(\delta_{\xi}). 
\end{align*}
 Furthermore, for $\delta\in (0,\delta_0)$, 
\begin{align*}
\| \widehat{(\eta u)}_{\delta} \|_{L^{2}(\R^d)} = \| ({\eta u})_{\delta} \|_{L^{2}(\R^d)} \leq 2 \| {\eta u}\|_{L^{2}(\R^d)}.
\end{align*}
\noindent Therefore  $ \frac{1}{\delta}  |\xi|^{-1}( \widehat{u}_{\delta} - \widehat{u} )$ is bounded in $L^{2}(\R^d) $ uniformly in $\delta$, \emph{i.e.},
\begin{align}
    \label{eq:uniform-H-1-control}
    \frac1\delta\| |\xi|^{-1} (\widehat u_\delta - \widehat u)\|_{L^2(\R^d)}= \frac{1}{\delta} \| u_\delta - u \|_{\dot H^{-1}(\R^d)}  \leq C,
\end{align}
for some constant $C>0$ independent of $\delta >0$. In combination with the pointwise convergence, we have that, as $\delta \to 0 $, $ \frac{1}{\delta}  |\xi|^{-1}\big( \widehat{u}_{\delta}(\xi) - \widehat{u}(\xi) \big)$ converges to $-i |\xi|^{-1} \xi \cdot \widehat{\eta u}(\xi)$ weakly in $L^{2} (\R^d)$.    

\medskip 

 \textbf{Step 5. -- Variations of $\mathcal F$.} \\
 We claim that, for any probability density $u \in \Hnuw \cap \Hnuid$,
\begin{align}
    \label{eq:limit}
   \lim_{\delta \to 0} \frac{1}{\delta}
   \big[\mathcal{F}(u_{\delta}) - \mathcal{F}(u) \big] 
   =\int_{\R^{d}} \nabla{v}(x) \cdot {\eta u}(x) \, \d x.  
\end{align}
This step is divided into 2 parts, where the first part is dedicated to smooth densities and the second part less regular densities, respectively.\\

\underline{\textit{Substep 5.1} -- Smooth densities.} First, we prove Eq. \eqref{eq:limit} for a probability density $u \in C^{1}(\R^d) \cap L^{\infty}(\R^d)$. By Plancherel's Theorem and Eq. \eqref{eq:factorization}, we have, for any $\rho>0$  
\begin{align*}
\frac{1}{\delta}\big( \mathcal{F}(u_{\delta}) - \mathcal{F}(u) \big) 
&=\frac{1}{2} \int_{\R^{d}} -i\xi \big( \widehat{v}_{\delta}(-\xi) + \widehat{v}(-\xi) \big) \cdot \frac{1}{\delta} i\xi|\xi|^{-2}\big( \widehat{u}_{\delta}(\xi) - \widehat{u}(\xi) \big) \, \d \xi\\
   & = \frac{1}{2} \int_{\R^{d}}  \nabla \big( v + v_{\delta} \big)(x) \cdot \frac{1}{\delta}  \nabla  K_{1} *\big( u - u_{\delta} \big)(x)\, \d x\\
   &=\frac{1}{2}\int_{\R^{d} \setminus B(0,\rho)}\nabla ( v + v_{\delta})(x) \cdot \frac{1}{\delta}  \nabla  K_{1} * (u - u_{\delta})(x) \, \d x \\
   &\quad + \frac{1}{2}\int_{B(0,\rho)} \nabla ( v + v_{\delta})(x) \cdot \frac{1}{\delta}  \nabla  K_{1} * ( u - u_{\delta})(x) \, \d x. 
\end{align*} 
Since $\supp\eta\subset B(0,R)$, \textbf{Step 3.} and \textbf{Step 4.} imply by weak-strong lemma that for any $\rho >R$,
\begin{align*}
    \lim_{\delta \to 0} \frac{1}{2}\int_{B(0,\rho)} \nabla \big( v + v_{\delta} \big) (x) \cdot \frac{1}{\delta}  \nabla K_1*\big( u - u_{\delta} \big)(x) \, \d x 
    &= \int_{B(0,\rho)} \nabla{v}(x) \cdot {\eta u}(x) \, \d x\\
    &=  \int_{\R^d} \nabla{v}(x) \cdot {\eta u}(x) \, \d x.  
\end{align*}

Therefore, it is sufficient to prove that the term over $\R^d\setminus B(0,\rho)$ goes to 0 uniformly in $\delta$ as $\rho\to\infty$. Since $u \in C^{1}(\R^d) \cap L^{\infty}(\R^d)$, for some constant  $C$ depending on $\|u\|_{C^{1}(\R^d)}$ and $\|\eta\|_{C^{1}(\R^d)}$, we get 
\begin{align*}
   \left|  u(x) - u_{\delta}(x) \right| &\leq \left|  u(x) - u(\phi_{\delta}(x)) \right| + \left| u(\phi_{\delta}(x)) \left( \det \nabla \phi_{\delta} -    \mathrm{I}\right)\right|\leq C \delta,
\end{align*}
Since $\supp(u - u_{\delta})\subset B(0,R)$, for $|x|>\rho$ with $\rho>2R$ we get
\begin{align*}
    \nabla  K_{1} *\big( u - u_{\delta} \big)(x) =
    c_d \int_{B(0,R)}  \frac{(u - u_{\delta}) (y)(x-y)}{|x-y|^{d}} \d y.
\end{align*}
Thus, since $|x|\leq 2|x-y|$ for $y\in B(0,R)$ we have 
\begin{align*}
\big|\nabla K_{1} *\big( u - u_{\delta} \big)(x) \big|
\leq 
C\delta\int_{B(0,R)}  \frac{\d y}{|x|^{d-1}}= C\delta |x|^{1-d}. 
\end{align*}
and therefore, for $d \geq 2$, we obtain
\begin{align*}
    \sup_{\delta\in (0,\delta_0)} \Bigg| \int_{\R^{d} \setminus B(0,\rho)} &\nabla ( v + v_{\delta})(x)  \cdot \frac{1}{\delta}  \nabla K_1*( u - u_{\delta})(x) \, \d x \Bigg| \\
    &\leq  \sup_{\delta\in (0,\delta_0)} 
    \left\{
    \|\nabla ( v + v_{\delta})\|_{L^{2}(\R^{d})} 
    \ \left(\int_{\R^{d} \setminus B(0,\rho)} \big| \frac{1}{\delta}  \nabla K_1* \big( u - u_{\delta} \big) (x)\big|^{2}\d x \right)^{1/2}
    \right\} \\
    &\leq C \int_{\R^{d} \setminus B(0,\rho)} \frac{\d x}{|x|^{2(d-1)}} \xrightarrow{\rho\to\infty} 0.
\end{align*}
In the one-dimensional case $d=1$, observing that $i\xi|\xi|^{-2}= i\xi^{-1},$ $\xi\in \R$, we can legitimately identify 
$\nabla  K_{1} *\big( u - u_{\delta} \big)(x) =U(x)$ where  $U(x)$ is given in Eq. \eqref{eq:x1dim-anti-derivative},  that is,  
\begin{align*}
    \nabla  K_{1} *\big( u - u_{\delta} \big)(x) 
	&= \frac{1}{2} \int_{\R} (u - u_{\delta}) (y)  \operatorname{sgn}(x-y)\d y= \frac{1}{2} \int_{B(0,R)}  \frac{(u - u_{\delta}) (y)(x-y)}{|x-y|}  \d y.
\end{align*}

 Note that $|x|>R$ and $y\in B(0,R)$ we have $  \operatorname{sgn}(x-y) =  \operatorname{sgn}(x) $. So that 
\begin{align*}
	\nabla  K_{1} *\big( u - u_{\delta} \big)(x) 
	=  \operatorname{sgn}(x) \int_{B(0,R)} (u - u_{\delta}) (y)\d y=0. 
\end{align*}

\underline{\textit{Substep 5.2} -- General case.} Let probability density $u \in \Hnuw \cap \Hnuid$ be given and let $u^{\varepsilon} \in C^{1}(\R^d) \cap L^{\infty}(\R^d)$ be such that $\lim_{\varepsilon \to 0} \|u - u^{\varepsilon}\|_{\Hnuw }  + \|u - u^{\varepsilon}\|_{\Hnuid}= 0$, see for instance \cite[Chapter 3]{guy-thesis}. We define the push forward $u_{\delta}^{\varepsilon} := {\phi_{\delta}}_\# u^{\varepsilon}$, and the associated pressure $v_{\delta}^{\varepsilon} $ by $\widehat{v}_{\delta}^{\varepsilon}  := L^{-1} \widehat{u}_{\delta}^{\varepsilon} $. Using Cauchy-Schwartz and Eq. \eqref{eq:factorization}, we may write
\begin{align*}
    \begin{split}
            \frac{1}{\delta}
            &\left| \big(\mathcal{F}(u_{\delta}) - \mathcal{F}(u) \big) - \big(\mathcal{F}(u_{\delta}^{\varepsilon})- \mathcal{F}(u^{\varepsilon}) \big) \right|\\ 
            &\qquad = \frac{1}{2\delta} \Bigg|\int_{\R^{d}}\big( (\widehat{v}_{\delta}-\widehat{v}_{\delta}^{\varepsilon})(-\xi) + (\widehat{v}-\widehat{v}^{\varepsilon})(-\xi) \big) \, \big(\widehat{u}_{\delta}(\xi) - \widehat{u}(\xi) \big) \, \d \xi \\
            &\qquad \qquad \quad - \int_{\R^{d}}\big( \widehat{v}_{\delta}^{\varepsilon}(-\xi) + \widehat{v}^{\varepsilon}(-\xi) \big) \frac{1}{\delta} \big( (\widehat{u}^{\varepsilon}_{\delta}-\widehat{u}_{\delta})(\xi) - (\widehat{u}^{\varepsilon}-\widehat{u})(\xi) \big) \, \d \xi \Bigg|\\
            &\qquad \leq C \Big(\| u_{\delta}-u_{\delta}^{\varepsilon} \|_{\Hnusd}  + \| u-u^{\varepsilon} \|_{\Hnusd}  \Big) \frac{1}{\delta}\|u_{\delta} - u\|_{ \dot H^{-1}(\R^d)}\\
            &\qquad \qquad + C \| u^{\epsilon}+u_{\delta}^{\varepsilon} \|_{\Hnusd}  \frac{1}{\delta}\Big\|(u^{\varepsilon}_{\delta}-u_{\delta})-(u^{\varepsilon}-u )\Big\|_{\dot H^{-1}(\R^d)}.
    \end{split}
\end{align*}

Proceeding as in \textbf{Step 3},  Eq. \eqref{eq:difference-grad-v_delta}, we have that 
\begin{align*}
      \| u_{\delta}-u_{\delta}^{\varepsilon} \|_{\Hnusd}  &\leq C \left( \| u_{\delta}-u_{\delta}^{\varepsilon} \|_{\Hnuw} + \| u_{\delta}-u_{\delta}^{\varepsilon} \|_{\dot H^{-1}(\R^{d})} \right)\\
      &\leq C \left( \| u-u^{\varepsilon} \|_{\Hnuw} + \| u-u^{\varepsilon} \|_{\dot H^{-1}(\R^{d})} \right),  
\end{align*}
where $C>0$ is independent of $\delta>0$. Let us stress that the first term vanishes, \emph{i.e.}, $\| u-u^{\varepsilon} \|_{\Hnuw}\to 0$, as $\varepsilon \to 0$, by definition. 
Next, by the continuous embedding of Theorem \ref{teo:eq} $(ii)$, we have $\|u\|_{\dot H^{-1}} \leq C \|u\|_{\dot H^{\psi^{-1}}}$, which is bounded by assumption, whence we conclude that $u \in \dot H^{-1}(\R^{d})$.
Moreover, we have that $\| u-u^{\varepsilon} \|_{H^{-1}(\R^{d})} \to 0$, as $\varepsilon \to 0$. By Eq. \eqref{eq:uniform-H-1-control} in \textbf{Step 3},  we have that $ \frac{1}{\delta}\|u_{\delta} - u\|_{H^{-1}(\R^d)}$ is bounded uniformly in $\delta$. 
Furthermore, proceeding as in \textbf{Step 3}, we obtain a bound on $\| u_{\delta}+u_{\delta}^{\varepsilon} \|_{\Hnusd}$, uniform in $\delta$ and $\varepsilon$, see Eq. \eqref{eq:delta-unform2} and, additionally, we obtain that $\frac{1}{\delta}\|(u - u^{\varepsilon})-(u_{\delta} - u^{\varepsilon}_{\delta})\|_{\dot H^{-1}(\R^d)} \leq C \left\|u - u^{\varepsilon}\right\|_{L^{2}(\R^d)} $, see Eq. \eqref{eq:delta-uniform1}. 

Finally, since $\left\|u - u^{\varepsilon}\right\|_{L^{2}(\R^d)} + \left\|\nabla v - \nabla v^{\varepsilon}\right\|_{L^{2}(\R^d)} \to 0$, as $\varepsilon \to 0$, we have that
\begin{align*}
    \int_{\R^{d}} \nabla{v}^{\varepsilon}(x) \cdot {\eta u}^{\varepsilon}(x) \, \d x \to \int_{\R^{d}} \nabla{v}(x) \cdot {\eta u}(x) \, \d x .
\end{align*}
Putting everything together, we have that, for every $\varepsilon>0$,
\begin{align}
\label{eq:longerror}
    \begin{split}
        &\lim_{\delta \to 0} 
        \Big| \frac{1}{\delta} \Big(\mathcal{F}(u_{\delta}) - \mathcal{F}(u) \Big) - \int_{\R^{d}} \nabla{v}(x) \cdot {\eta u}(x) \, \d x \Big| \\
        &\qquad \qquad \leq  \lim_{\delta \to 0} \Big|  \frac{1}{\delta}\Big(\mathcal{F}(u_{\delta}) - \mathcal{F}(u) \Big) - \frac{1}{\delta}\Big(\mathcal{F}(u^{\varepsilon}_{\delta}) - \mathcal{F}(u^{\varepsilon}) \Big) \Big|\\
        &\qquad \qquad \qquad+\lim_{\delta \to 0} \Big| - \frac{1}{\delta}\Big(\mathcal{F}(u^{\varepsilon}_{\delta}) - \mathcal{F}(u^{\varepsilon}) \Big)  - \int_{\R^{d}} \nabla{v}^{\varepsilon}(x) \cdot {\eta u}^{\varepsilon}(x) \, \d x \Big|\\
        &\qquad \qquad  \qquad +\Big| \int_{\R^{d}} \nabla{v}^{\varepsilon}(x) \cdot {\eta u}^{\varepsilon}(x) \, \d x - \int_{\R^{d}} \nabla{v}(x) \cdot {\eta u}(x) \, \d x \Big|\\
        &\qquad \qquad \leq C \Big( \| u-u^{\varepsilon} \|_{\Hnuw} + \| u-u^{\varepsilon} \|_{H^{-1}(\R^{d})} + \| u-u^{\varepsilon} \|_{L^{2}(\R^{d})} \Big)\\
        &\qquad \qquad \qquad + \left| \int_{\R^{d}} \nabla{v}^{\varepsilon}(x) \cdot {\eta u}^{\varepsilon}(x) \, \d x - \int_{\R^{d}} \nabla{v}(x) \cdot {\eta u}(x) \, \d x \right|.
    \end{split}
\end{align}
Since \eqref{eq:longerror} is valid for any $\varepsilon>0$, we have shown claim of the second part.
\medskip 

\textbf{Step 6. --  Conclusion.}\\ Combining all pieces of the jigsaw, we obtain
\begin{align}
    \label{eq:inequality}
    0 \leq \int_{\R^{d}} \nabla{v}(x) \cdot {\eta u}(x) \, \d x - \frac{1}{\tau}  \int_{\R^{d}}\big(T_{u^{k}_{\tau}}^{u^{k-1}_{\tau}} - \mathrm{I}\big) \cdot \eta u^{k}_{\tau} \, \d x.
\end{align}
Replacing $\eta$ by $-\eta$ in Eq. \eqref{eq:inequality}, we have that 
\begin{align}
    \label{eq:inequality2}
    0 = \int_{\R^{d}} \nabla{v}(x) \cdot {\eta u}(x) \, \d x - \frac{1}{\tau}  \int_{\R^{d}}\big(T_{u^{k}_{\tau}}^{u^{k-1}_{\tau}} - \mathrm{I}\big) \cdot \eta u^{k}_{\tau} \, \d x.
\end{align}
We have thus proven Eq. \eqref{eq:firsteq}. Taking a sequence $\eta$ converging to $\nabla v $, and using the fact that 
\begin{align*}
     \int_{\R^{d}} \big|T_{u^{k}_{\tau}}^{u^{k-1}_{\tau}} - \mathrm{I}\big|^{2} u^{k}_{\tau} \, \d x =  W^{2}(u, u^{k-1}_{\tau}),
\end{align*}
we obtain Eq. \eqref{eq:secondeq}. 
\end{proof}

\medskip 

Finally, we show that the limiting curve $u$ is a weak solution, \emph{i.e.}, $u$ satisfies Eq. \eqref{eq:main_eqn}, hence giving the proof of Theorem \ref{thm:main-theorem} $(iv)$.
\begin{theorem}[Weak solution]\label{thm:weak-solution}
Assume that the conditions \eqref{eq:compactness-ass} and \eqref{eq:lower-bound-cond} are satisfied. Moreover, assume the symbol $\widetilde{\psi}$ is associated with a unimodal L{\'e}vy kernel $\widetilde{\nu}$ satisfying the following condition 
\begin{align}\label{eq:double-cond-origin-bis2}
\text{For any $0<\lambda<1$ there is $c_\lambda>0$ s.t. $\widetilde{\nu}(\lambda h)\leq c_\lambda\widetilde{\nu} (h)$ whenever $|h|\leq 1$}.
\end{align} 
Let $u_{0} \in \Hnuid \cap \mathcal{P}_2(\R^d)$. Let $v=L^{-1}u$ where $u$ is the limiting curve defined by Theorem \ref{thm:mini-mvt-scheme} and $v$ its associated pressure. Then, $u$ is a weak solution to Eq. \eqref{eq:main_eqn} in the following sense
\begin{align}
    \label{eq:weak-solo}
    \int_{0}^{\infty} \int_{\R^{d}} (\partial_{t} \varphi - \nabla \varphi \cdot \nabla v) u \, \d x \, \d t =0, \, \quad \forall \varphi \in C^{\infty}_{c} (\R^{d}\times (0, \infty)).
\end{align}
\end{theorem}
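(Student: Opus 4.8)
The plan is to pass to the limit $\tau_n\to0$ in the discrete Euler--Lagrange equation \eqref{eq:firsteq}--\eqref{eq:secondeq} of Theorem \ref{thm:euler-lagrange-discrete}, summed against a time-dependent test function. First I would fix $\varphi\in C^\infty_c(\R^d\times(0,\infty))$ and choose $T>0$ with $\supp\varphi\subset\R^d\times(0,T)$. Writing the discrete curve as $u_\tau(t)=u_\tau^{\lceil t/\tau\rceil}$, one uses the optimal transport map $T_{u_\tau^{k}}^{u_\tau^{k-1}}$ as a near-optimal perturbation and the standard comparison of finite differences of $\varphi$ with its time derivative (the De Giorgi / JKO discrete-to-continuous identity, as in \cite[Theorem 11.1.3]{AGS08} or \cite{LMS2018}) to obtain a discrete weak formulation of the form
\begin{align*}
\int_0^\infty\!\!\int_{\R^d}\partial_t\varphi(x,t)\,u_\tau(t,x)\,\d x\,\d t
= -\int_0^\infty\!\!\int_{\R^d}\nabla\varphi(x,t)\cdot\nabla v_\tau(t,x)\,u_\tau(t,x)\,\d x\,\d t + \mathcal R_\tau,
\end{align*}
where the remainder $\mathcal R_\tau$ is controlled by $\|\varphi\|_{C^2}$ times $\sum_k W^2(u_\tau^{k-1},u_\tau^k)\le 2\tau\,\cF(u_0)$ and hence $\mathcal R_\tau=O(\tau)\to0$; here Eq. \eqref{eq:firsteq} with $\eta=\nabla\varphi(\cdot,t_k)$ is exactly what converts the transport-map term into the pressure term.

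Next I would pass to the limit term by term along the subsequence $(\tau_n)_n$ supplied by Theorem \ref{thm:convergence-discrete}. For the left-hand side, narrow convergence $u_{\tau_n}(t)\rightharpoonup u(t)$ for each $t$ together with the uniform second-moment bound \eqref{eq:dist-uk-delta} and dominated convergence in $t$ give $\int\!\!\int\partial_t\varphi\,u_{\tau_n}\to\int\!\!\int\partial_t\varphi\,u$. For the right-hand side I would use the strong convergence $u_{\tau_n}\to u$ in $L^2(T_0,T;L^2_{\loc})$ (valid under \eqref{eq:compactness-ass}) against the weak convergence $\nabla v_{\tau_n}\rightharpoonup\nabla v$ in $L^2(0,T;L^2(\R^d))$ (valid under \eqref{eq:lower-bound-cond}), applying the weak--strong product lemma on the compact set $\supp_x\varphi$; the cutoff in $t$ away from $0$ is harmless since $\varphi$ vanishes near $t=0$. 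This yields exactly \eqref{eq:weak-solo}. The conditions \eqref{eq:double-cond-origin-bis2} and $\nu\notin L^1$ enter only through Theorem \ref{thm:euler-lagrange-discrete} and the compact embedding, so they are already absorbed into the cited convergence results.

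The main obstacle I anticipate is the product $\nabla\varphi\cdot\nabla v_{\tau_n}\,u_{\tau_n}$: it is a product of a weakly convergent factor ($\nabla v_{\tau_n}$) and an only weakly convergent factor ($u_{\tau_n}$), so one genuinely needs the strong $L^2_{\loc}$ compactness of $(u_{\tau_n})$ to identify the limit, which is precisely why hypothesis \eqref{eq:compactness-ass} is imposed. A secondary technical point is justifying the discrete weak formulation uniformly in $\tau$ — i.e.\ that the error from replacing $\frac1\tau(T_{u_\tau^k}^{u_\tau^{k-1}}-\mathrm I)$-type terms by time-integrals of $\partial_t\varphi$ is $o(1)$ — which follows from a Taylor expansion of $\varphi$ in space, the Cauchy--Schwarz bound $\big(\sum_kW(u_\tau^{k-1},u_\tau^k)\big)^2\le \tau N\sum_k\frac1\tau W^2\le 2T\,\cF(u_0)$, and Eq. \eqref{eq:secondeq} to control the $L^2(u_\tau)$-norm of the displacement by $\tau\|\nabla v_\tau\|_{L^2(u_\tau)}$, whose square integrates to $\le2\cF(u_0)$ by \eqref{eq:secondeq} and Proposition \ref{prop:discrete}. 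Once these estimates are in place, collecting the limits gives the weak formulation and completes the proof.
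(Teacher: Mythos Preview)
Your proposal is correct and follows essentially the same route as the paper: plug $\eta=\nabla_x\varphi$ into the discrete Euler--Lagrange equation \eqref{eq:firsteq}, sum/integrate in time, then pass to the limit using the strong $L^2_{\loc}$ convergence of $u_{\tau_n}$ against the weak $L^2$ convergence of $\nabla v_{\tau_n}$ from Theorem \ref{thm:convergence-discrete}, while the transport-map term converges to the time-derivative term by the standard JKO argument (the paper simply cites \cite[Theorem 11.1.6]{AGS05} for this). Your write-up is in fact more explicit than the paper's about the weak--strong product step and the $O(\tau)$ remainder control, but the ideas coincide.
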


\begin{proof}
Given $\varphi \in C^{\infty}_{c} (\R^{d}\times (0, \infty))$, we use $\eta = \nabla_{x} \varphi$ in the Euler-Lagrange equation, Eq. \eqref{eq:firsteq}, in Theorem \ref{thm:euler-lagrange-discrete}, and integrate in time to get
\begin{align}
    \label{eq:soldisc}
    \sum_{k\in \N}\int_{(k-1)\tau}^{k \tau} \left\{\int_{\R^{d}} \nabla {v}^{k}_{\tau} \cdot \eta u^{k}_{\tau}  - \frac{1}{\tau}   \big(T_{u^{k}_{\tau}}^{u^{k-1}_{\tau}} - \mathrm{I}\big) \cdot \eta u^{k}_{\tau} \, \d x\right\} \d t = 0.
\end{align}
Using the definition of the piecewise constant interpolation, Theorem \ref{thm:convergence-discrete} allows us to pass of the limit in the first term of Eq. \eqref{eq:soldisc}, which converges to     
\begin{align*}
    \int_{0}^{+ \infty} \int_{\R^{d}} \nabla {v} \cdot \nabla_{x} \varphi u\, \d x  \, \d t .
\end{align*}
The convergence of the second term associated to the time derivative is classical, and its limit is
\begin{align*}
\int_{0}^{+ \infty} \int_{\R^{d}} \partial_{t} \varphi u \, \d x  \, \d t,
\end{align*}
see, for example, \cite[Theorem 11.1.6]{AGS05}, which concludes the proof.
\end{proof}

Finally, let us address the remaining outstanding item, Theorem \ref{thm:main-theorem}, (iv), that is, the energy dissipation inequality. To this end, we follow the usual argument relying on the De Giorgi interpolation.
\begin{definition}[De Giorgi variational interpolation]
We define the \emph{De Giorgi interpolant} $\widetilde{u}_{\tau} \in AC^{2} ([0, T), (\mathcal{P}_2(\R^d), W)) $ as $\widetilde{u}_{\tau}(k\tau) : = {u}_{\tau}(k\tau)$ for $k=1,2,...$, and 
\begin{align}
    \label{eq:variational}
    \widetilde{u}_{\tau} := \argmin_{u} \Big\{ \frac{1}{2( t - (k-1)\tau )} W^{2}(u, u^{k-1}_{\tau}) + \mathcal{F}(u)\Big\},
\end{align}
for $t \in ((k-1)\tau, k \tau)$ and $k \in \N$. We observe that Eq. \eqref{eq:variational} has a unique solution $\widetilde{u}\in \Hnuw$ which can be established as in the proof of Theorem \ref{thm:main-theorem} $(i)$ and $(ii)$. Moreover, the De Giorgi interpolation coincides with the discrete minimizers at multiples of $\tau$.
\end{definition}

We will now prove a proposition,  analog to Proposition 6.3 in \cite{LMS2018}, regarding De Giorgi variational interpolation, which we will use to prove the energy dissipation inequality, Theorem \ref{thm:main-theorem},  $(iv)$. 

\begin{proposition}
Let ${v}_{\tau} $ be given by $\widehat{\widetilde{v}_{\tau}} = L^{-1} \widehat{\widetilde{u}_{\tau}} $. Then, for all $N \in \mathbb{N}$, and $\tau>0$, 
\begin{align}
    \label{eq:fromprop}
    \frac{1}{2} \int_{0}^{N \tau} \int_{\R^{d}} \big| \nabla v_{\tau} \big|^{2} u_{\tau} \, \d x \, \d t + \frac{1}{2} \int_{0}^{N \tau} \int_{\R^{d}} \big| \nabla \widetilde{v}_{\tau} \big|^{2} \widetilde{u}_{\tau} \, \d x \, \d t + \mathcal{F}\big(u_{\tau} (N \tau ) \big) \leq  \mathcal{F}(u_{0}). 
\end{align}

Moreover, for any $t \in [0,T]$, there holds
\begin{align}
    \label{eq:secondeq2}
    W^{2}\big( \widetilde{u}_{\tau}(t), u_{\tau}(t) \big) \leq 8 \tau \mathcal{F}(u_{0}).
\end{align}
\end{proposition}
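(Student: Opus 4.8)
The plan is to mimic the classical De Giorgi argument adapted to the present nonlocal setting, using the Euler--Lagrange identities from Theorem~\ref{thm:euler-lagrange-discrete} applied both to the discrete minimizers $u^k_\tau$ and to the variational interpolant $\widetilde u_\tau$. The key point is that on each interval $((k-1)\tau,k\tau)$ the map $\widetilde u_\tau(t)$ is itself the minimizer of a Moreau--Yosida functional with the modified time step $\sigma := t-(k-1)\tau$, so the same computation as in Theorem~\ref{thm:euler-lagrange-discrete} yields
\begin{align*}
\int_{\R^d} \big|\nabla\widetilde v_\tau(t)\big|^2 \widetilde u_\tau(t)\,\d x = \frac{1}{\sigma^2} W^2\big(\widetilde u_\tau(t), u^{k-1}_\tau\big),
\end{align*}
exactly the analogue of Eq.~\eqref{eq:secondeq}.

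First I would record the discrete energy-dissipation estimate. Testing the minimality of $\widetilde u_\tau(t)$ against the competitor $u^{k-1}_\tau$ gives, for $t\in((k-1)\tau,k\tau)$,
\begin{align*}
\frac{1}{2\sigma} W^2\big(\widetilde u_\tau(t),u^{k-1}_\tau\big) + \mathcal F\big(\widetilde u_\tau(t)\big) \le \mathcal F\big(u^{k-1}_\tau\big),
\end{align*}
and at the endpoint $t=k\tau$ this reduces to the familiar $\frac{1}{2\tau}W^2(u^k_\tau,u^{k-1}_\tau)+\mathcal F(u^k_\tau)\le\mathcal F(u^{k-1}_\tau)$ from Proposition~\ref{prop:discrete}. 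Then I would follow the De Giorgi refinement: differentiating (or using the a.e.\ differentiability of $t\mapsto \mathcal F(\widetilde u_\tau(t))$ along the curve, as in \cite[Theorem 3.1.4]{AGS08}) one obtains on $((k-1)\tau,k\tau)$ the identity/inequality
\begin{align*}
-\frac{\d}{\d t}\mathcal F\big(\widetilde u_\tau(t)\big) \ge \frac12\,\frac{W^2\big(\widetilde u_\tau(t),u^{k-1}_\tau\big)}{\sigma^2} + \frac12\, |\partial\mathcal F|^2\big(\widetilde u_\tau(t)\big) \ge \frac12\int_{\R^d}\big|\nabla\widetilde v_\tau(t)\big|^2\widetilde u_\tau(t)\,\d x + \frac12\,\frac{W^2\big(u^k_\tau,u^{k-1}_\tau\big)}{\tau^2},
\end{align*}
where the second term comes from bounding the metric slope of $\mathcal F$ at $\widetilde u_\tau(t)$ below by the (descending) incremental ratio, and the identity $W^2(u^k_\tau,u^{k-1}_\tau)/\tau^2 = \int|\nabla v^k_\tau|^2 u^k_\tau\,\d x = \int_{(k-1)\tau}^{k\tau}\frac1\tau\int|\nabla v_\tau|^2 u_\tau\,\d x\,\d t$ from Eq.~\eqref{eq:secondeq}. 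Integrating this differential inequality over $((k-1)\tau,k\tau)$ and summing over $k=1,\dots,N$ telescopes the left-hand side to $\mathcal F(u_0)-\mathcal F(u_\tau(N\tau))$ and produces precisely Eq.~\eqref{eq:fromprop}.

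For the second assertion, Eq.~\eqref{eq:secondeq2}, I would combine the pieces already in hand. From the dissipation inequality $\frac{1}{2\sigma}W^2(\widetilde u_\tau(t),u^{k-1}_\tau)\le \mathcal F(u^{k-1}_\tau)\le\mathcal F(u_0)$ (monotonicity of $\mathcal F$ along the scheme, Proposition~\ref{prop:discrete}(i), together with $\mathcal F(u^0_\tau)\le\mathcal F(u_0)$) we get $W^2(\widetilde u_\tau(t),u^{k-1}_\tau)\le 2\sigma\,\mathcal F(u_0)\le 2\tau\,\mathcal F(u_0)$, and likewise $W^2(u_\tau(t),u^{k-1}_\tau) = W^2(u^k_\tau,u^{k-1}_\tau)\le 2\tau(\mathcal F(u^{k-1}_\tau)-\mathcal F(u^k_\tau))\le 2\tau\,\mathcal F(u_0)$. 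The triangle inequality for $W$ then gives
\begin{align*}
W^2\big(\widetilde u_\tau(t),u_\tau(t)\big) \le 2\,W^2\big(\widetilde u_\tau(t),u^{k-1}_\tau\big) + 2\,W^2\big(u^{k-1}_\tau,u_\tau(t)\big) \le 8\tau\,\mathcal F(u_0),
\end{align*}
which is Eq.~\eqref{eq:secondeq2}.

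I expect the main obstacle to be the rigorous justification of the differential inequality for $t\mapsto\mathcal F(\widetilde u_\tau(t))$ on each subinterval---specifically, verifying that $\widetilde u_\tau$ is absolutely continuous in $(\mathcal P_2,W)$ with metric derivative controlled by $W(\widetilde u_\tau(t),u^{k-1}_\tau)/\sigma$, and that the metric slope $|\partial\mathcal F|(\widetilde u_\tau(t))$ is bounded below by $\big(\int|\nabla\widetilde v_\tau|^2\widetilde u_\tau\big)^{1/2}$. The latter requires re-running the perturbation argument of Theorem~\ref{thm:euler-lagrange-discrete} with the variable step $\sigma$ in place of $\tau$, which is legitimate since all the estimates there (boundedness of $(u_\delta)_\delta$ in $\Hnuw$, the strong convergence $\nabla v_\delta\to\nabla v$ in $L^2_{\loc}$, and the weak convergence in Step~4) depend on $\tau$ only through the competitor $u^{k-1}_\tau$, which is unchanged. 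Everything else is the standard De Giorgi bookkeeping and poses no difficulty.
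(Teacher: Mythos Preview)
Your argument for Eq.~\eqref{eq:secondeq2} is correct and coincides with the paper's: triangle inequality together with the one-step estimates $W^{2}(\widetilde u_\tau(t),u^{k-1}_\tau)\le 2\sigma\,\cF(u^{k-1}_\tau)$ and $W^{2}(u^k_\tau,u^{k-1}_\tau)\le 2\tau\,\cF(u^{k-1}_\tau)$, both bounded by $2\tau\,\cF(u_0)$ via Proposition~\ref{prop:discrete}(i).

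For Eq.~\eqref{eq:fromprop}, however, there is a genuine gap. The displayed differential inequality you write for $-\frac{\d}{\d t}\cF(\widetilde u_\tau(t))$ is not what \cite[Theorem~3.1.4]{AGS08} provides, and I do not see how to justify it. That theorem (and the closely related Lemma~3.2.2, which is what the paper invokes) differentiates the \emph{Moreau--Yosida value function}
\[
\sigma\;\longmapsto\;\Phi(\sigma):=\min_{u}\Big\{\tfrac{1}{2\sigma}W^{2}(u,u^{k-1}_\tau)+\cF(u)\Big\}
=\tfrac{1}{2\sigma}W^{2}\big(\widetilde u_\tau,u^{k-1}_\tau\big)+\cF\big(\widetilde u_\tau\big),
\]
yielding $\Phi'(\sigma)=-\tfrac{1}{2\sigma^{2}}W^{2}(\widetilde u_\tau,u^{k-1}_\tau)$ a.e.; integrating this from $0^{+}$ to $\tau$ gives exactly the paper's key estimate \eqref{eq:tobesummed}. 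You are instead trying to differentiate $\cF(\widetilde u_\tau(t))$ alone, which is a different (and harder) object: one would need to control $\frac{\d}{\d\sigma}W^{2}(\widetilde u_\tau(\sigma),u^{k-1}_\tau)$ separately. More seriously, the second step in your chain,
\[
\tfrac12\,|\partial\cF|^{2}\big(\widetilde u_\tau(t)\big)\;\ge\;\tfrac12\,\frac{W^{2}(u^{k}_\tau,u^{k-1}_\tau)}{\tau^{2}},
\]
is not justified: by the Euler--Lagrange identity you yourself record, the slope at $\widetilde u_\tau(t)$ equals $W(\widetilde u_\tau(t),u^{k-1}_\tau)/\sigma$, and there is no a~priori monotonicity of $\sigma\mapsto W(\widetilde u_\tau(\sigma),u^{k-1}_\tau)/\sigma$ that would compare it to its value at $\sigma=\tau$. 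The fix is simply to follow the paper: invoke \cite[Lemma~3.2.2]{AGS05} (equivalently, integrate $\Phi'$) to obtain \eqref{eq:tobesummed}, sum over $k=1,\dots,N$, and then substitute the identities \eqref{eq:secondeq} and \eqref{eq:secondeq3}. Your observation that Theorem~\ref{thm:euler-lagrange-discrete} applies verbatim to $\widetilde u_\tau(t)$ with step $\sigma$ in place of $\tau$ is correct and is precisely what gives \eqref{eq:secondeq3}.
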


\begin{proof}
The proof of this energy dissipation inequality is classical and we give it here for completeness. First, proceeding as in the proof of Theorem \ref{thm:euler-lagrange-discrete}, we show that
\begin{align}
\label{eq:secondeq3}
      \int_{\R^{d}} \big| \nabla \widetilde{v}^{k}_{\tau} \big|^{2} \widetilde{u}_{\tau} \, \d x = \frac{1}{(t - (k-1)\tau)^{2}} W^{2}\big(\widetilde{u}_{\tau}, u^{k-1}_{\tau} \big).
\end{align}
From \cite[Lemma 3.2.2]{AGS05}, we infer the energy estimate
\begin{align}
    \label{eq:tobesummed}
    \frac{1}{2 \tau} W^{2}(u^{k}_{\tau}, u^{k-1}_{\tau}) + \frac{1}{2} \int_{(k-1)\tau}^{k \tau} \frac{1}{(t - (k-1)\tau)^{2}} W^{2}\big(\widetilde{u}_{\tau}, u^{k}_{\tau} \big) \, \d t + \mathcal{F}(u_{\tau}^{k}) \leq \mathcal{F}(u_{\tau}^{k-1}).
\end{align}
Summing over $k\in{1,\ldots, N}$, we obtain
\begin{align}
    \label{eq:tobesummed-intermediate}
    \frac{1}{2 \tau} \sum_{k=1}^N W^{2}(u^{k}_{\tau}, u^{k-1}_{\tau}) + \frac{1}{2} \sum_{k=1}^N \int_{(k-1)\tau}^{k \tau} \frac{1}{(t - (k-1)\tau)^{2}} W^{2}\big(\widetilde{u}_{\tau}, u^{k}_{\tau} \big) \, \d t + \mathcal{F}(u_{\tau}^{N}) \leq \mathcal{F}(u_{\tau}^{0}).
\end{align}
Substituting Eqs. (\ref{eq:secondeq}, \ref{eq:secondeq3}) into Eq. \eqref{eq:tobesummed-intermediate}, we obtain 
\begin{align*}
    \frac{\tau}{2} \sum_{k=1}^N\int_{\R^{d}} \big| \nabla {v}^{k}_{\tau} \big|^{2} u^{k}_{\tau} \, \d x + \frac{1}{2} \sum_{k=1}^N \int_{(k-1)\tau}^{k \tau} \int_{\R^{d}} \big| \nabla \widetilde{v}^{k}_{\tau} \big|^{2} \widetilde{u}_{\tau} \, \d x \, \d t + \mathcal{F}(u_{\tau}^{N}) \leq \mathcal{F}(u_{\tau}^{0}),
\end{align*}
which yields the first statement, Eq. \eqref{eq:fromprop}, after replacing the sums over $k$ by integrals. The second statement, Eq. \eqref{eq:secondeq2},  follows from the triangle inequality and Proposition \ref{prop:discrete} $(i)$.
\end{proof}

\smallskip

We now give the brief proof of the energy dissipation inequality, Theorem \ref{thm:main-theorem} $(v)$.

\begin{theorem}\label{thm:energy-disspation-est}
Let $u$ be a weak solution of Problem \eqref{eq:main_eqn}, obtained as the limit of the minimizing movement scheme.  Then, there holds the following energy estimate
\begin{align}
    \label{eq:energydis}
    \cF(u(T)) + \int_{0}^{T} \int_{\R^{d}} u(t)  \big| \nabla v (t) \big|^{2} \, \d x \, \d t \leq \cF(u_{0}). 
\end{align}
\end{theorem}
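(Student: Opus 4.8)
The strategy is to pass to the limit $\tau \to 0$ (along the subsequence $(\tau_n)_n$ from Theorem \ref{thm:convergence-discrete}) in the discrete energy dissipation inequality \eqref{eq:fromprop}. First I would drop the second (nonnegative) term in \eqref{eq:fromprop} involving the De Giorgi interpolant $\widetilde u_\tau$, keeping only
\begin{align*}
    \frac{1}{2} \int_{0}^{N\tau} \int_{\R^{d}} |\nabla v_\tau|^2 u_\tau \, \d x \, \d t + \cF\big(u_\tau(N\tau)\big) \leq \cF(u_0).
\end{align*}
Fixing $T>0$ and choosing $N=N(\tau)=\lceil T/\tau \rceil$ so that $N\tau \to T$, the task reduces to showing (a) $\liminf_{\tau\to 0}\cF(u_\tau(N(\tau)\tau)) \geq \cF(u(T))$, and (b) $\liminf_{\tau\to 0}\int_0^{N(\tau)\tau}\int_{\R^d} |\nabla v_\tau|^2 u_\tau \,\d x\,\d t \geq \int_0^T\int_{\R^d} |\nabla v|^2 u\,\d x\,\d t$. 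Since $\cF(u_0)$ is independent of $\tau$, combining (a) and (b) yields \eqref{eq:energydis}.

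For (a), I would use that $u_\tau(N(\tau)\tau) = u_\tau^{N(\tau)} \to u(T)$ narrowly (this follows from the narrow convergence of $u_{\tau_n}(t)$ for each $t$, after noting $N(\tau)\tau \to T$ and using the $AC^2$-equicontinuity estimate \eqref{eq:compactness-assumption1} to handle the discrepancy between $N(\tau)\tau$ and $T$), together with the narrow lower semicontinuity of $\cF$ established in the proof of Theorem \ref{thm:lower-semicontinuous} (via the Banach-Saks/Fatou argument on $\psi^{-1/2}\widehat u$). For (b), I would first rewrite the dissipation integrand using Plancherel as done in Theorem \ref{thm:convergence-discrete}: $\int_{\R^d}|\nabla v_\tau|^2 u_\tau\,\d x$ — but here one must be careful since this is not simply a norm of $v_\tau$; rather $\int|\nabla v_\tau|^2 u_\tau\,\d x = \int \nabla v_\tau\cdot\nabla v_\tau \, u_\tau\,\d x$ involves the product with $u_\tau$. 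The cleaner route is: from Theorem \ref{thm:convergence-discrete} $(ii)$ we have $\nabla v_{\tau_n} \rightharpoonup \nabla v$ weakly in $L^2(0,T;L^2(\R^d))$, and from Theorem \ref{thm:euler-lagrange-discrete}, Eq. \eqref{eq:secondeq}, the left side equals $\tau^{-2}W^2(u^k_\tau,u^{k-1}_\tau)$, so $\int_0^{N\tau}\int_{\R^d}|\nabla v_\tau|^2 u_\tau\,\d x\,\d t = \sum_k \tau^{-1}W^2(u^k_\tau,u^{k-1}_\tau) \geq \int_0^{N\tau} m_\tau^2(t)\,\d t$ in the notation of Theorem \ref{thm:mini-mvt-scheme}; since $m_\tau \rightharpoonup m$ weakly in $L^2$ with $|u'|(t) \leq m(t)$ a.e., weak lower semicontinuity of the $L^2$-norm gives $\liminf \geq \int_0^T m^2 \geq \int_0^T |u'|^2\,\d t$. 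It then remains to identify $\int_0^T|u'|^2(t)\,\d t$ with $\int_0^T\int_{\R^d}|\nabla v|^2 u\,\d x\,\d t$; this is the standard fact that for a weak solution of the continuity equation $\partial_t u = \div(u\nabla v)$ with velocity field $\nabla v$, the metric derivative satisfies $|u'|^2(t) = \int_{\R^d}|\nabla v(t)|^2 u(t)\,\d x$, using that $\nabla v$ is the minimal-norm velocity (which holds here because $\nabla v = \nabla \frac{\delta\cF}{\delta u}$ is a gradient), invoking \cite[Theorem 8.3.1]{AGS08}.

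The main obstacle I anticipate is step (b) — specifically, establishing that $\nabla v$ is indeed the velocity field realizing the metric derivative, i.e.\ that $|u'|^2(t) = \int_{\R^d}|\nabla v(t)|^2 u(t)\,\d x$ rather than merely $|u'|^2(t) \leq \int_{\R^d}|\nabla v(t)|^2 u(t)\,\d x$. The inequality $\geq$ (which is what we need) follows from the Benamou-Brenier characterization: any velocity field $w$ with $\partial_t u + \div(uw) = 0$ satisfies $|u'|^2(t) \leq \int |w(t)|^2 u(t)\,\d x$, so to get the dissipation estimate in the correct direction one actually wants the reverse, namely $\int_0^T m^2 \geq \int_0^T \int |\nabla v|^2 u$. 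This requires knowing $\nabla v$ is the \emph{minimal} velocity. A safer alternative, avoiding this subtlety entirely, is to pass to the limit directly in the product $\int \nabla v_\tau \cdot \nabla v_\tau\, u_\tau$ using the strong $L^2_{\loc}$ convergence of $\nabla v_{\tau_n}$ (Step 3 of Theorem \ref{thm:euler-lagrange-discrete} gives strong local convergence of the discrete pressures, and \eqref{eq:compactness-ass} gives strong $L^2(0,T;L^2_{\loc})$ convergence of $u_{\tau_n}$), combined with a tightness/uniform-integrability argument to control the tails; lower semicontinuity of $u \mapsto \int |\nabla v|^2 u$ under this joint convergence then closes the argument. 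I would carry out both (a) and this second version of (b), and present the energy-gap cancellation $\cF(u_0)$ appearing on both sides as the final line.
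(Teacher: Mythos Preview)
Your overall strategy---pass to the limit in the discrete inequality \eqref{eq:fromprop} after dropping the nonnegative De Giorgi term---is exactly what the paper does, and part (a) is fine. The paper, however, handles part (b) in a single line by invoking the joint lower semicontinuity result \cite[Theorem 5.4.4]{AGS05}: the functional $(\mu,w)\mapsto \int |w|^2\,\d\mu$ is lower semicontinuous under narrow convergence of $\mu_n$ together with weak (distributional) convergence of the momenta $w_n\mu_n$. Since the narrow convergence of $u_{\tau_n}$ and the weak convergence of $u_{\tau_n}\nabla v_{\tau_n}$ (the latter coming from strong $L^2(0,T;L^2_{\loc})$ convergence of $u_{\tau_n}$ and weak $L^2(0,T;L^2)$ convergence of $\nabla v_{\tau_n}$, both in Theorem~\ref{thm:convergence-discrete}) are already in hand, the liminf inequality for the dissipation follows immediately.

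Your first route via the metric derivative stalls exactly where you say: Benamou--Brenier gives $|u'|^2(t)\leq \int|\nabla v(t)|^2 u(t)\,\d x$, which is the wrong direction, and you do not establish minimality of $\nabla v$.

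Your ``safer alternative'' contains a genuine misreading. Step~3 of Theorem~\ref{thm:euler-lagrange-discrete} proves strong $L^2_{\loc}$ convergence $\nabla v_\delta\to\nabla v$ as the \emph{perturbation parameter} $\delta\to 0$ for fixed $k,\tau$; it says nothing about $\tau\to 0$. Along $\tau\to 0$ you only have weak $L^2$ convergence of $\nabla v_{\tau_n}$, so you cannot argue by strong convergence of the velocities. The good news is that strong convergence is unnecessary: the joint lower semicontinuity of \cite[Theorem 5.4.4]{AGS05} needs only the weak convergence of momenta just described, which is precisely the ingredient already used to pass to the limit in the weak formulation (Theorem~\ref{thm:weak-solution}).
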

\begin{proof}
With the regularity of $(u_\tau)_\tau$ and $(\nabla v_\tau)_\tau$ established above, we may use the lower semicontinuity result \cite[Theorem 5.4.4]{AGS05} to the limit in Eq. \eqref{eq:fromprop}, as $\tau$ tends to $0$. This way we obtain the energy dissipation inequality, Eq. \eqref{eq:energydis}. 
\end{proof}

\section{Examples of kernels}
\label{sec:examples}
Here, we provide various examples of kernels that satisfy the conditions of our main result. Let us recall the notations 
\begin{align*}
    \psi^{-1}(\xi)=\frac{1}{\psi(\xi)},
    \qquad \widetilde{\psi}(\xi)=|\xi|^2\psi^{-1}(\xi),\qquad \text{and} \qquad \psi^{*}(\xi)=|\xi|^2\psi^{-2}(\xi)=\widetilde{\psi}(\xi)\psi^{-1}(\xi), 
\end{align*}
where ${\psi}(\xi)$ is the symbol of a L\'{e}vy operator $L$, see Theorem \ref{thm:fourier-symbol}.
We will also denote the L\'{e}vy kernel corresponding to the symbol $\widetilde{\psi}$ by $\widetilde{\nu}$, whenever the latter exists. 
\subsection{Concrete examples}
\begin{example}[Standard example] \label{ex:fractional-laplace} For $s\in [0,1)$ consider  $\psi(\xi)=|\xi|^{2s}$. As highlighted in the introduction the corresponding L\'{e}vy kernel $\nu(h) =\frac{C_{d,s}}{2}|h|^{-d-2s}$ is the standard interaction  kernel of the fraction Laplacian $L=(-\Delta)^s$ when $s\in (0,1)$. In the two extreme cases, $s=1$, $\psi(\xi)=|\xi|^{2}$ corresponds to the symbol of usual Laplacian, and $s=0$, $\psi(\xi)=1$ corresponds to the symbol of the identity operator $L=I$. In either case, we have the following. 
\begin{align*}
    \psi(\xi) =|\xi|^{2s}, \quad \psi^{-1}(\xi)=|\xi|^{-2s},  \quad  \widetilde{\psi}(\xi)=|\xi|^{2(1-s)}, \quad \text{and} \quad \psi^*(\xi)= |\xi|^{2(1-2s)}.
\end{align*}
The nonlocal spaces associated to $\psi$ are $\Hnup= H^s(\R^d), \quad \Hnupd=\dot{H}^{s}(\R^d)$, and
\begin{align*}
      \Hnuid= \dot H^{-s}(\R^d), \quad \Hnuw = H^{1-s}(\R^d), \quad  \text{and} \quad  \Hnus = H^{1-2s}(\R^d).
\end{align*}
Moreover, we readily see that $\widetilde{\psi}$ is the symbol associated with the operator $(-\Delta)^{1-s}$ whose kernel is given by $\widetilde{\nu} (h)= \frac{C_{d,1-s}}{2} |h|^{-d-2(1-s)}$. 
\end{example}
\begin{remark}[The case $s=1$]
Let us briefly comment on the particular case $s=1$. Indeed, when $s=1$, we find $\Hnuw= L^2(\R^d)$, thus, the compactness derived from the flow-interchange argument reduces to a uniform $L^2(0,T;L^2(\R^d))$-bound on $u$, which, of course is insufficient for strong compactness. However, the weak compactness provided by the Banach-Alaoglu theorem can be complemented by strong compactness of the velocity field. In some sense, the role of weak and strong compactness when passing to the limit in the product $u \nabla v$, can be swapped. To see this, we observe, that in the particular case $s=1$, the velocity field has additional regularity, and formally
$$
 \|\nabla (-\nabla v)\|_{L^2(0,T;L^2(\R^d))}^2 =\|-D^2 L^{-1}u\|_{L^2(0,T;L^2(\R^d))}^2 = \|- \Delta L^{-1}u\|_{L^2(0,T;L^2(\R^d))}^2 = \|u\|_{L^2(0,T;L^2(\R^d))}^2,
$$
which is bounded by the dissipation term produced by the flow-interchange. Additionally, 
$$
    \|v\|_{L^\infty(0,T;L^2(\R^d))}^2=\|\nabla L^{-1}u\|_{L^\infty(0,T;L^2(\R^d))}^2 \leq  \int_{\R^d} u_0 L^{-1} u_0 \d x,
$$
i.e., the $L^\infty(0,T;L^2(\R^d))$-norm of the velocity field is controlled by the initial energy. Thus, we have a uniform control of $-\nabla L^{-1}u$ in $L^2(0,T;H^{1}(\R^d))$. Furthermore, we get some weak time regularity by observing
\begin{align*}
    \int_{\R^d} \partial_t \nabla L^{-1} u \cdot \varphi \d x 
    &= - \int_{\R^d} \partial_t u \nabla \cdot(L^{-1}\varphi) \d x\\
    &= - \int_{\R^d} \nabla \cdot(u \nabla L^{-1} u ) \nabla \cdot( L^{-1}\varphi) \d x\\
    &= \int_{\R^d} u \nabla L^{-1} u \nabla (\nabla \cdot L^{-1} \varphi) \d x.
\end{align*}
Thus 
\begin{align*}
    \|\partial_t (-\nabla L^{-1}u)\|_{H^{-k}(\R^d)} \leq \|u\|_{L^2(\R^d)} \|\nabla L^{-1} u\|_{L^\infty(0,T;L^2(\R^d))} \|\varphi\|_{L^\infty(0,T;H^k(\R^d))},
\end{align*}
where $k\in \N$ is large enough such that $\|\nabla (\nabla \cdot L^{-1}\varphi)\|_{L^\infty(\R^d)}\leq \|\varphi\|_{H^k(\R^d)}$.
An Aubin-Lions-type argument (cf.\cite{simon1986compact}) gives the local strong compactness of the velocity field which is enough to pass to the limit.
\end{remark}
\begin{example}
Consider the L\'{e}vy operator associated with the kernel 
\begin{align*}
\nu(h)= \frac12 \int_0^\infty e^{-t} G_t(h)\d t= \frac12 \int_0^\infty \frac{ e^{-t}}{(4\pi t)^{d/2}} e^{-\frac{|h|^2}{4t}}\d t. 
\end{align*}
The corresponding L\'{e}vy symbol is given by 
\begin{align*}
\psi(\xi)= 2\int_{\R^d}(1-\cos(\xi\cdot h))\nu(h)\d h= 1- \int_0^\infty e^{-t} \widehat{G}_t(\xi) \d t= \frac{|\xi|^2}{1+|\xi|^2}.
\end{align*}
The symbol $\psi$ clearly satisfies 
$$\frac{1}{2}(1\land|\xi|^2)\leq \psi(\xi)\leq(1\land|\xi|^2),$$ 
and  we also  have 
\begin{align*}
    \text{$\widetilde{\psi}(\xi)= 1+|\xi|^2$,\quad  
    $\psi^{-1}(\xi) = 1+|\xi|^{-2}$, \quad and \quad $\psi^{*}(\xi) = 2+|\xi|^{-2}+|\xi|^2$.}
\end{align*}
Therefore, we get the following functions spaces 
\begin{align*}
    &\text{$\dot{H}^{1}(\R^d)+ L^2(\R^d)\subset \Hnupd$,\quad   $\Hnup = L^2(\R^d)$, \quad \text{and} \quad  $\Hnuid=\dot{H}^{-1}(\R^d)\cap L^2(\R^d)$.}
\end{align*} 
as well as
\begin{align*}
    &\text{$\Hnuw= \Hnuwd= H^{1}(\R^d)$, \quad \text{and} \quad  $\Hnus= \Hnusd= \dot{H}^{-1}(\R^d)\cap H^{1}(\R^d).$}
\end{align*}
\end{example}

\begin{example}
Consider $s\in (0,1]$ and $\psi(\xi) = (|\xi|^2+1)^s-1$. 
This is the symbol of  the L\'{e}vy type denoted $L= (-\Delta +I)^s- I$, usually referred to as \emph{fractional Schr{\"o}dinger operator} or the \emph{fractional relativistic operator}. According to \cite{FF14}, see also \cite{JKS23}, it can be shown that 
\begin{align*}
    Lu(x)= (-\Delta +I)^su(x)- u(x)= S_{d,s}\int_{\R^d} \frac{(u(x)-u(y))}{|x-y|^{\frac{d+2s}{2}}} K_{\frac{d+2s}{2}}(|x-y|)\d y,
\end{align*}
where $K_\beta$ is the modified Bessel function of the second kind\footnote{Modified Bessel function of the second kind are also called modified Bessel function of the third kind as for instance in\cite{ArSm61}.} of order $\beta\in \R$ \cite{ArSm61,EMOT81,Wat95} given as 
\begin{align*}
K_\beta(r)= 2^{-\beta-1} r^\beta\int_0^\infty e^{-t} e^{-\frac{r^2}{4t}} t^{-\beta-1}\d t. 
\end{align*}
In the definition of $L$, the constant 
$$
    S_{d,s}= \frac{2^{-\frac{d-2s}{2}+1} }{\pi^{d/2} |\Gamma(-s)|}= \frac{2^{-\frac{d+2s}{2}+1}C_{d,s}}{\Gamma\big(\frac{d+2s}{2}\big) },
$$  
is a normalizing constant such that  $\widehat{L u}(\xi)= \psi(\xi)\widehat{u}(\xi)$, for all $u\in C_c^\infty(\R^d)$, see, for instance, \cite{FF14, JKS23,ArSm61}. One readily checks that 
\begin{align*}
    \lim_{|\xi|\to \infty}\frac{\psi(\xi)}{|\xi|^{2s}}
    = \lim_{|\xi|\to \infty}\frac{(|\xi|^2+1)^s-1}{|\xi|^{2s}}
    = 1,
    \quad \text{and}\quad 
    \lim_{|\xi|\to 0}\frac{\psi(\xi)}{|\xi|^{2}}
    =\lim_{|\xi|\to0}\frac{(|\xi|^2+1)^s-1}{|\xi|^{2}}
    =s.
\end{align*}
That is $\psi(\xi)\sim |\xi|^{2s}$, as $|\xi|\to \infty$ and $\psi(\xi)\sim |\xi|^{2}$, as $|\xi|\to 0$.  Therefore, there exists some $c>0$ such that 
\begin{align*}
    c^{-1} \min(|\xi|^2, |\xi|^{2s})\leq \psi(\xi)\leq c\min(|\xi|^2, |\xi|^{2s}). 
\end{align*}
It turns out that,  
\begin{align*}
    &\text{$\psi(\xi)\asymp \min(|\xi|^2, |\xi|^{2s})$,\quad  and \quad $\psi^{-1}(\xi)\asymp \max(|\xi|^{-2}, |\xi|^{-2s})$,}
\end{align*} 
as well as
\begin{align*}
    &\text{$\widetilde{\psi}(\xi)\asymp \max(1, |\xi|^{2(1-s)})$,\quad and \quad $\psi^{*}(\xi) \asymp \max(|\xi|^{-2}, |\xi|^{2-4s})$.  }
\end{align*}
Moreover, we have 
\begin{align*}
    \text{$\psi(\xi)\geq c_s (1\land|\xi|^2)$,\quad  with\quad  $c_s=\min(2^s-1, s2^{s-1})$.}
\end{align*}
Indeed, for $|\xi|^2\geq1$ we have $\psi(\xi)\geq (2^s-1)\geq (2^s-1)(1\land|\xi|^2)$,  whereas for $|\xi|^2\leq 1$ we get 
\begin{align*}
    &\psi(\xi)= s\int_0^{|\xi|^2} (1+t)^{s-1}\d t \geq s2^{s-1}|\xi|^2\geq s2^{s-1}(1\land|\xi|^2). 
\end{align*}
The corresponding functions spaces are given by
\begin{align*}
    &\text{$\Hnupd= \dot{H}^{1}(\R^d)+ \dot{H}^{s}(\R^d)$,  \quad  $\Hnup = H^{s}(\R^d)$, \quad  and \quad $\Hnuid=\dot{H}^{-1}(\R^d)\cap \dot{H}^{-s}(\R^d)$.}
\end{align*}
as well as
\begin{align*}
    &\text{$\Hnuw= \Hnuwd= H^{1-s}(\R^d)$, \quad and \quad 
    $\Hnusd= \dot{H}^{-1}(\R^d)\cap \dot{H}^{1-2s}(\R^d).$}
\end{align*}
\end{example}

\begin{example}
Consider $\psi(\xi) = \log (|\xi|^2+1)$. 
This symbol of L\'{e}vy type is associated to $L=\log(-\Delta +I)$, usually referred to as the \emph{logarithmic  Schr\"ondiger operator}, see for instance \cite{Feu23}. More precisely, it can be shown that 
\begin{align*}
    Lu(x)=\log(-\Delta +I) u(x)= S_{d,0} 
    \int_{\R^d} \frac{(u(x)-u(y))}{|x-y|^{\frac{d}{2}}} K_{\frac{d}{2}}(|x-y|)\d y,
\end{align*}
where the constant $S_{d,0}= 2^{-\frac{d}{2}+1} \pi^{-d/2}$  is a normalizing constant such that $\widehat{L u}(\xi)= \psi(\xi)\widehat{u}(\xi)$, for all $u\in C_c^\infty(\R^d)$. Observe that 
\begin{align*}
    \lim_{|\xi|\to \infty}\frac{\psi(\xi)}{\log |\xi|^2}
    = \lim_{|\xi|\to \infty}\frac{\log (|\xi|^2+1)}{\log |\xi|^2}
    = 1,
    \quad \text{and}\quad 
    \lim_{|\xi|\to 0}\frac{\psi(\xi)}{|\xi|^{2}}
    =\lim_{|\xi|\to0}\frac{\log (|\xi|^2+1)}{|\xi|^{2}}
    =1.
\end{align*}
That is $\psi(\xi)\sim \log |\xi|^2$, as $|\xi|\to \infty$ and $\psi(\xi)\sim |\xi|^{2}$, as $|\xi|\to 0$. Therefore, there is $c>0$ such that 
\begin{align*}
c^{-1} \min(|\log |\xi|^2|, |\xi|^{2})\leq \psi(\xi)\leq c\min(|\log |\xi|^2|, |\xi|^{2}). 
\end{align*}
As in the previous example, we have 
\begin{align*}
    \text{$ \psi(\xi)\geq c_0 (1\land|\xi|^2)$ \quad with\quad  $c_0=\min(\log2, 2^{-1}) $}, 
\end{align*}
Moreover, for any $0\leq \eta<1$, there exists $c= c(\eta)>0$, such that
\begin{align*}
    &\text{ $\psi(\xi)\asymp \min(|\log |\xi|^2|, |\xi|^{2})\leq c\min(|\xi|^{2}, |\xi|^{2(1-\eta)})$, \quad and \quad  $\psi^{-1}(\xi)\asymp \max(\log^{-1} |\xi|^2, |\xi|^{-2})$,}
\end{align*}
as well as
\begin{align*}
    &\text{$\widetilde{\psi}(\xi)\asymp \max(1, |\xi|^{2}\log^{-1} |\xi|^2) \geq c^{-1}(|\xi|^{2\eta}+1)$,\quad and \quad $\psi^{*}(\xi) \asymp \max(|\xi|^{-2}, |\xi|^{2}\log^{-2} |\xi|^2)$.}
\end{align*}
Thus, for any $0\leq \eta<1$, we get 
\begin{align*}
    \text{$H^{1-\eta} \subset \Hnup$,\quad and \quad $\Hnuw= \Hnuwd\subset H^{\eta}(\R^d)$. }
\end{align*}

\end{example}

\subsection{Miscellaneous examples}\label{sec:Misc-example}
 The notion of symmetric nonlocal  L\'evy operators  goes  beyond the class considered in this paper. The above examples give us the opportunity to  consider more versatile symmetric nonlocal  L\'evy operators that may apply or not  to our  results. 
 
 (i) In view of the Example \ref{ex:fractional-laplace} one can consider general kernels satisfying  the bounds condition
\begin{align*}
	C^{-1}|h|^{-d-2s_1}\leq \nu(h)\leq C|h|^{-d-2s_2} \quad \quad  \text{for all $|h|\leq 1$},
\end{align*}
for some constant $C>0$ and $s_1,s_2\in(0,1)$. This class includes a particular example  like $$\nu(h)= C_{d,s_1}|h|^{-d-2s_1}+C_{d,s_2}|h|^{-d-2s_2},$$ with $0<s_1\leq s_2<1$, yielding the symbol $\psi(\xi)=|\xi|^{2s_1}+|\xi|^{2s_2}$ and hence the operator $$Lu= (-\Delta)^{s_1}u+(-\Delta)^{s_2} u.$$ 
For this particular case we have $\Hnup= H^{s_2}(\R^d)$ whereas 
$\Hnuw= H^{1-s_1}(\R^d)$.

(ii) Another interesting example in connection to the  Example \ref{ex:fractional-laplace} with $s=0$, is the zero other operator of the form 
\begin{align*}
	Lu(x)= u(x)-\nu*u(x)= \int_{\R^d} (u(x)-u(x+h))\nu(h)\d h 
\end{align*}
where $\nu\in L^1(\R^d)$ is radial with $\|\nu\|_{L^1(\R^d)}=1$. Here we have $\Hnup= L^2(\R^d)$ whereas $\Hnuw= H^1(\R^d)$. Thus, this example  is similar to  Example \ref{ex:fractional-laplace} in the case  $s=0$. 

(iii) Our results also apply  to more general nonlocal operators with singular L\'evy measure like  the anisotropic fractional Laplacian given by 
\begin{align*}
	Lu = (-\partial^2_{x_1x_1})^s u+  \cdots+ (-\partial^2_{x_dx_d})^su&&s\in(0,1),
\end{align*} 
which is a pseudo-differential operator with symbol  
\begin{align*}
	\psi(\xi)= |\xi_1|^{2s}+\cdots+|\xi_d|^{2s}.  
\end{align*}
This symbol satisfies the estimates $d^{-s} |\xi|^{2s}\leq \psi(\xi) \leq d^{1-s} |\xi|^{2s}. $
Therefore, the functions spaces here are the same as those of Example \ref{ex:fractional-laplace}. Interestingly, it can be shown that the operator $Lu = (-\partial^2_{x_1x_1})^s u+  \cdots+ (-\partial^2_{x_dx_d})^su,$ see  for instance \cite[Section 2.6]{guy-thesis}, has the representation 
\begin{align*}
	Lu(x) =(-\partial^2_{x_1x_1})^s u(x)+  \cdots+ (-\partial^2_{x_dx_d})^su(x)= 2\pv \int_{\R^d}(u(x)-u(x+h))\d\nu(h)
\end{align*}
where the L\'evy measure $\d\nu$ is given by 
	\begin{align}\label{eq:mixed-levy-measure}
		\d \nu(h) = \sum_{j=1}^{d}C_{1,s} |h|^{-1-2s}\d h\prod_{i\neq j}\delta_{0_i}(\d h)= \sum_{j=1}^{d} C_{1,s}|h|^{-1-2s}\d h\otimes \delta_{\widetilde{0}_i}(\d \widetilde{h}_i). 
	\end{align}
	where $\widetilde{h}_i= (h_1,\cdots, h_{i-1}, h_{i+1},\cdots, h_{d})$ and $\delta_{\widetilde{0}_i}$  represents the Dirac measure at $\widetilde{0}_i$.

(iv) Although one is tempted to think that results extend to, in general, singular measures, the following counter-example of the discrete operator
\begin{align*}
    Lu(x) = 2u(x) - u(x + a) -u(x -a)= 	 2\pv \int_{\R^d}(u(x)-u(x+h))\d\nu_a(h)
\end{align*}
with $a\in \R^d$ and $\nu_a= \frac12(\delta_a+\delta_{-a})$ shows that this is not the case. It is easy to obtain that $\psi(\xi)= 2(1-\cos(\xi\cdot a))$. 	Here the symbol  $\psi(\xi)= 2(1-\cos(\xi\cdot a))$ is degenerate and fails to satisfy the lower bound condition $\psi(\xi)\geq c(1\land|\xi|^2)$.  Thus, this simple case does not enter the scope of our study.

\vspace{2mm}
\noindent \textbf{Data Availability Statement (DAS)}: Data sharing not applicable, no datasets were generated or analyzed during the current study.

\bibliographystyle{alpha}
\newcommand{\etalchar}[1]{$^{#1}$}

\end{document}